\documentclass[10pt]{article}
\usepackage{amsmath,amsthm,amsfonts}
\usepackage{graphicx}
\usepackage{longtable}
\usepackage{multirow, bigdelim}
\usepackage{float}
\usepackage{hyperref}
\usepackage{stmaryrd}

\def\qed{\vrule height5pt width3pt depth.5pt}

\theoremstyle{plain}
\newtheorem{thm}{Theorem}[section]

\newtheorem{lem}[thm]{Lemma}

\newtheorem{exa}{Example}[section]

\newtheorem{defn}{Definition}[section]

\newtheorem{rem}{Remark}[section]

\newcommand{\Across}{\raisebox{-0.25\height}{\includegraphics[width=0.5cm]{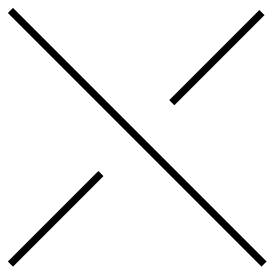}}}
\newcommand{\Asmooth}{\raisebox{-0.25\height}{\includegraphics[width=0.5cm]{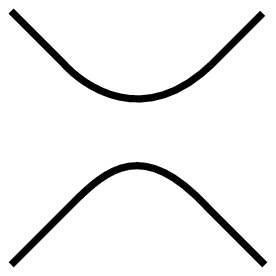}}}
\newcommand{\Bsmooth}{\raisebox{-0.25\height}{\includegraphics[width=0.5cm]{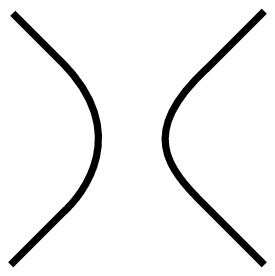}}}
\newcommand{\creation}{\raisebox{-0.25\height}{\includegraphics[height=.8cm]{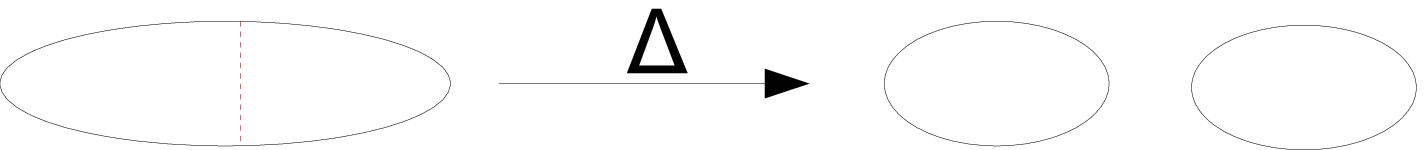}}}
\newcommand{\annihilation}{\raisebox{-0.25\height}{\includegraphics[height=.8cm]{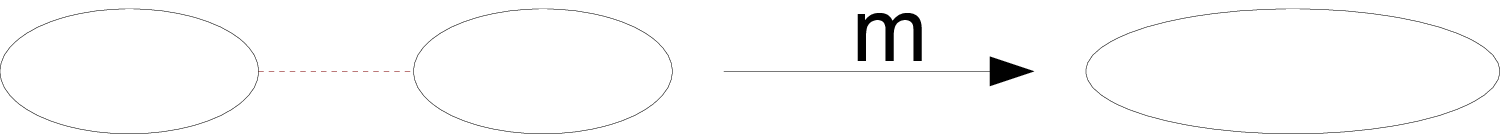}}}
\newcommand{\singlecycle}{\raisebox{-0.25\height}{\includegraphics[height=1.1cm]{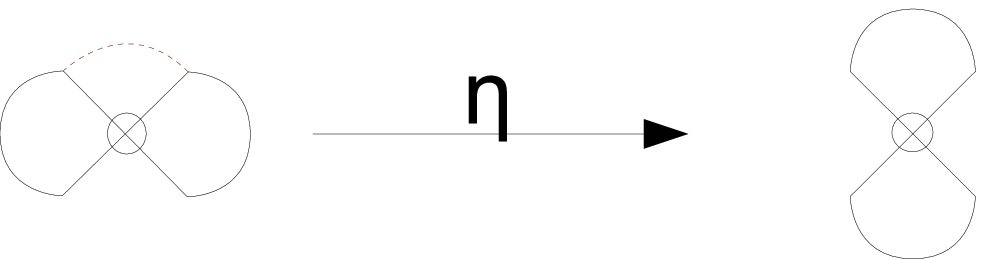}}}
\newcommand{\gradingexample}{\raisebox{-0.25\height}{\includegraphics[width=0.5\textwidth]{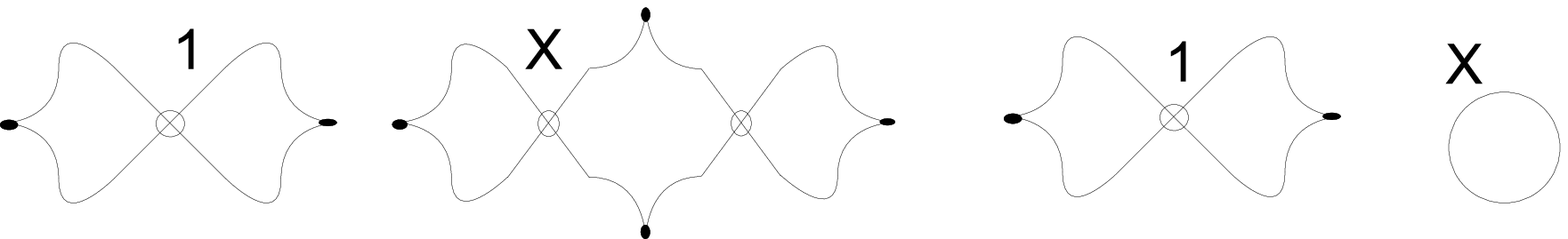}}}
\newcommand{\twoone}{\raisebox{-0.25\height}{\includegraphics[height=.8cm]{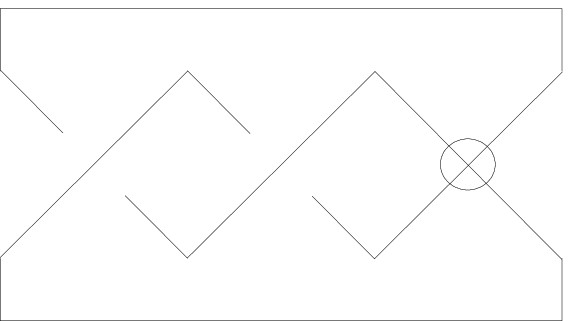}}}
\newcommand{\extendedbracketRIII}{\raisebox{-0.25\height}{\includegraphics[height=.9cm]{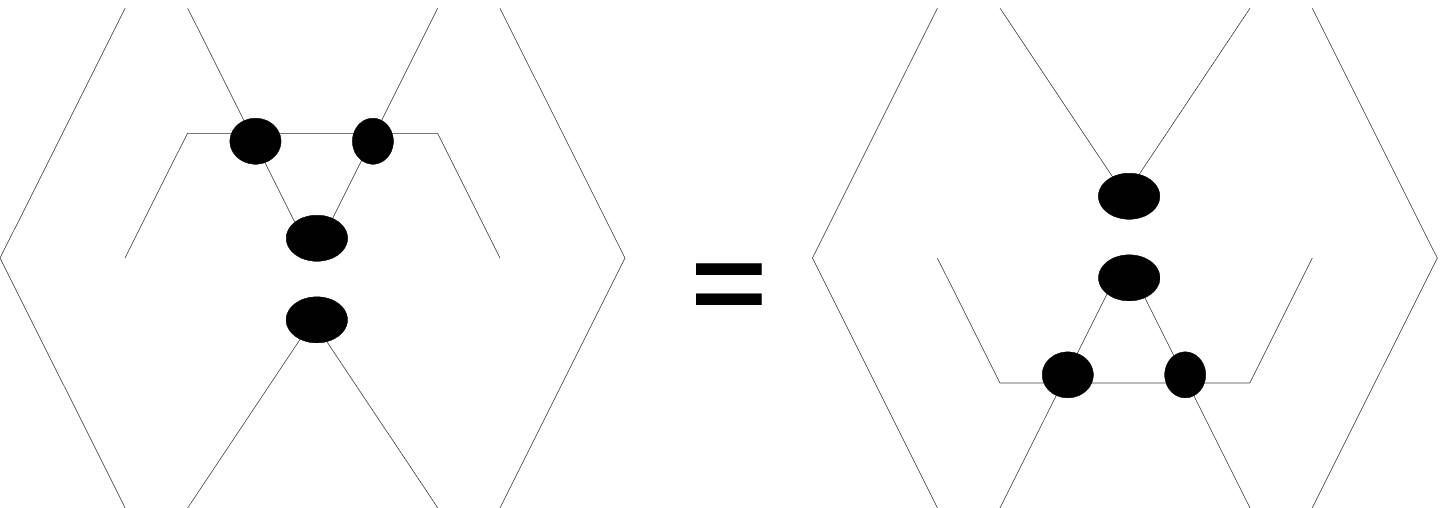}}}

\newcommand{\DTwoOne}{\raisebox{-0.25\height}{\includegraphics[height=1.5cm]{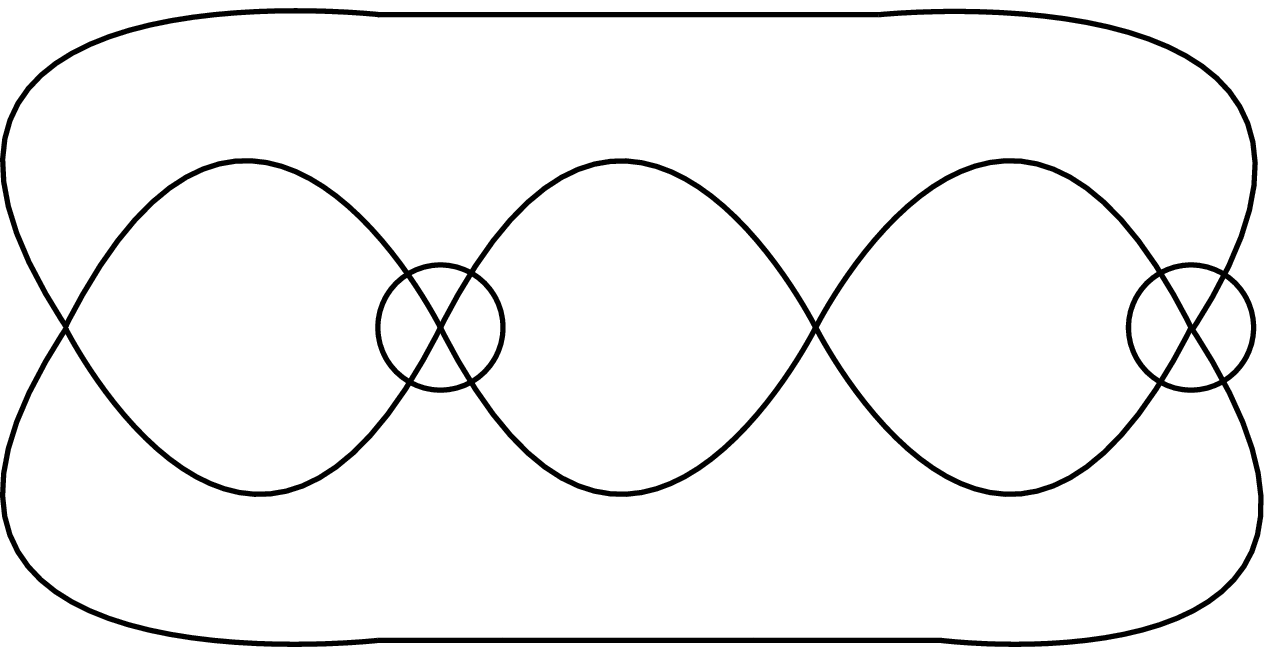}}}
\newcommand{\DTwoTwo}{\raisebox{-0.25\height}{\includegraphics[height=1.5cm]{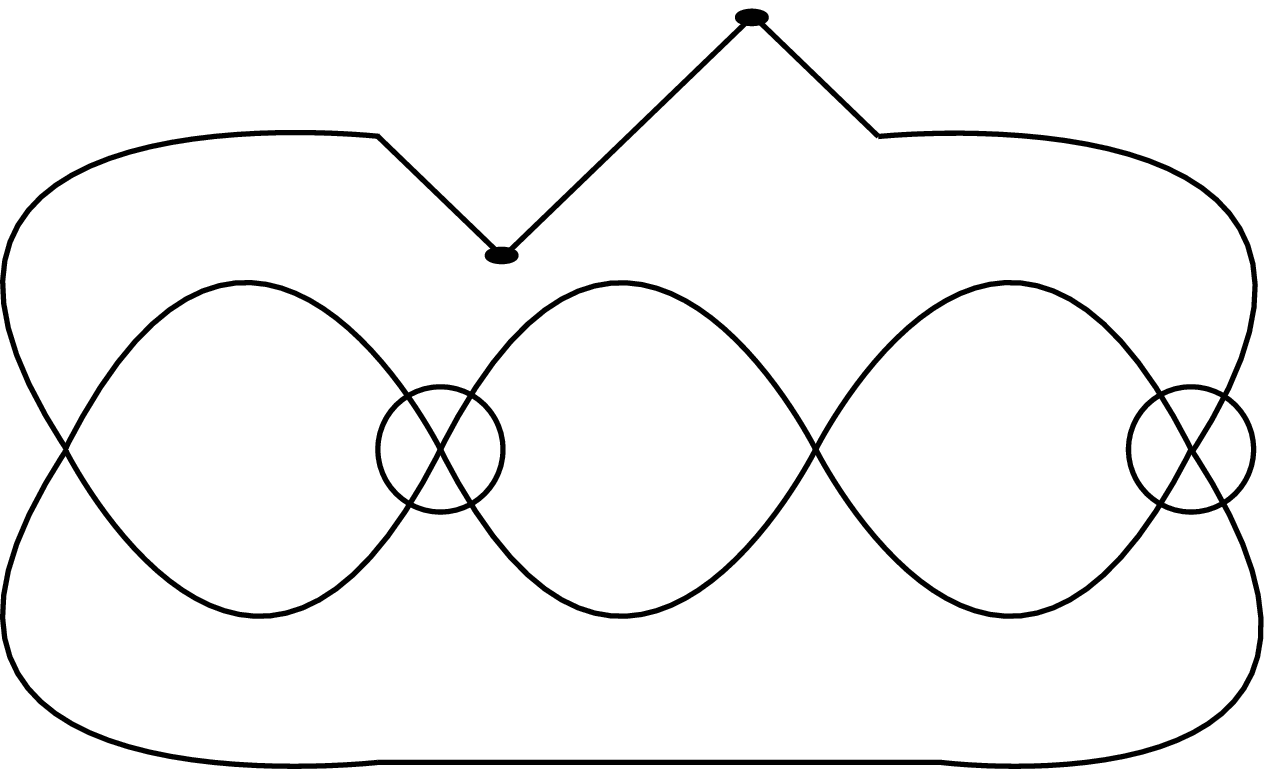}}}
\newcommand{\DTwoThree}{\raisebox{-0.25\height}{\includegraphics[height=1.5cm]{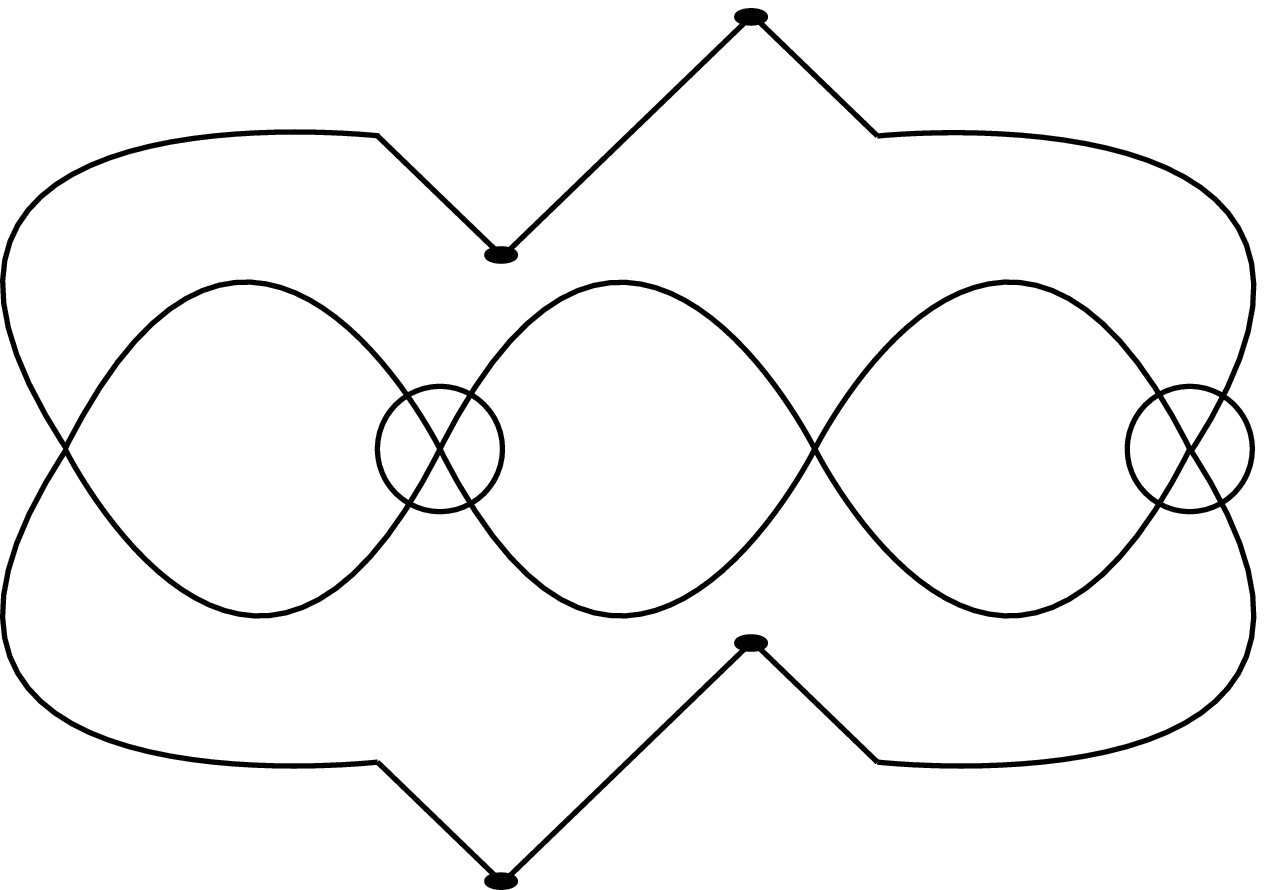}}}

\newcommand{\DFourOne}{\raisebox{-0.25\height}{\includegraphics[height=1.5cm]{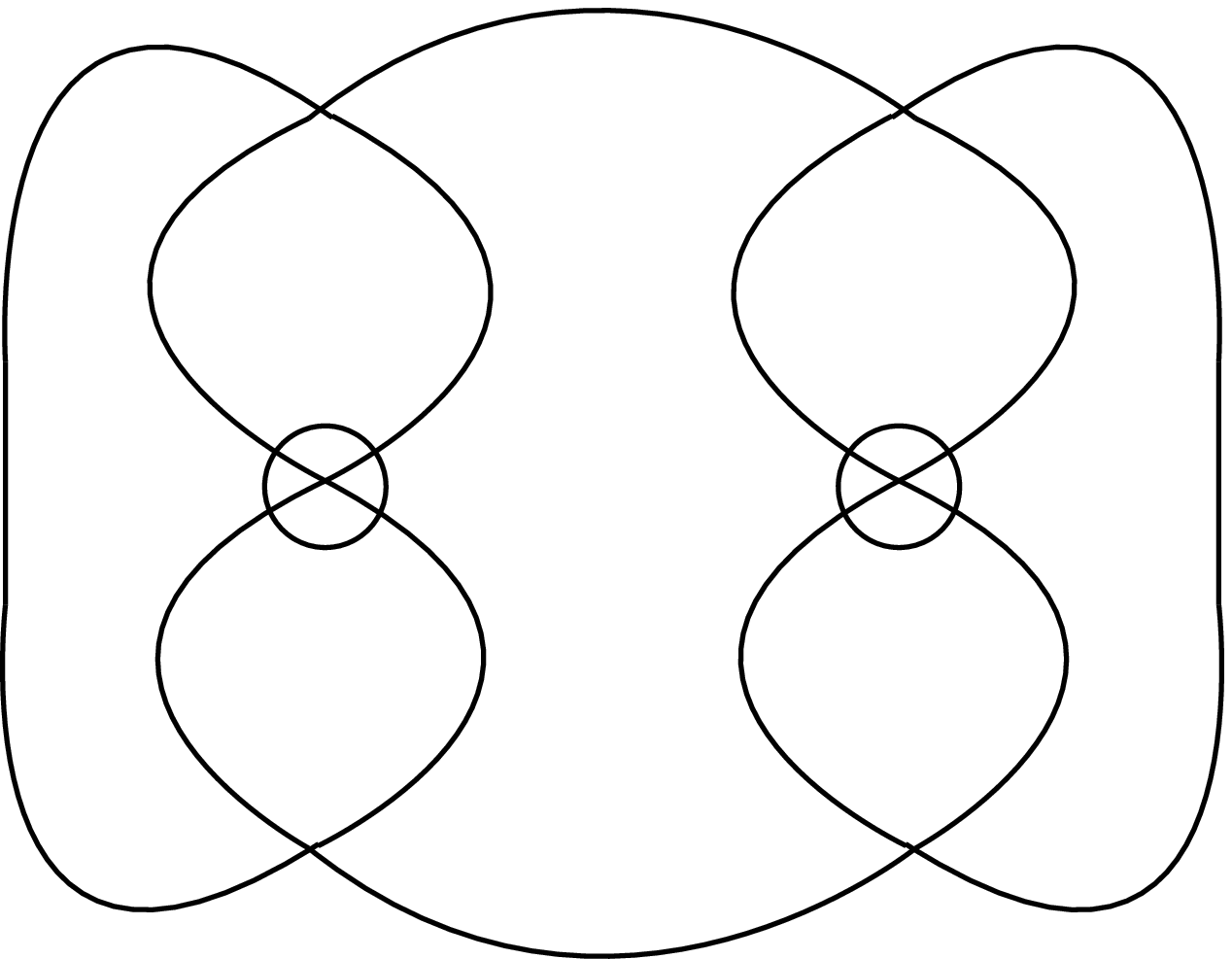}}}
\newcommand{\DFourTwo}{\raisebox{-0.25\height}{\includegraphics[height=1.5cm]{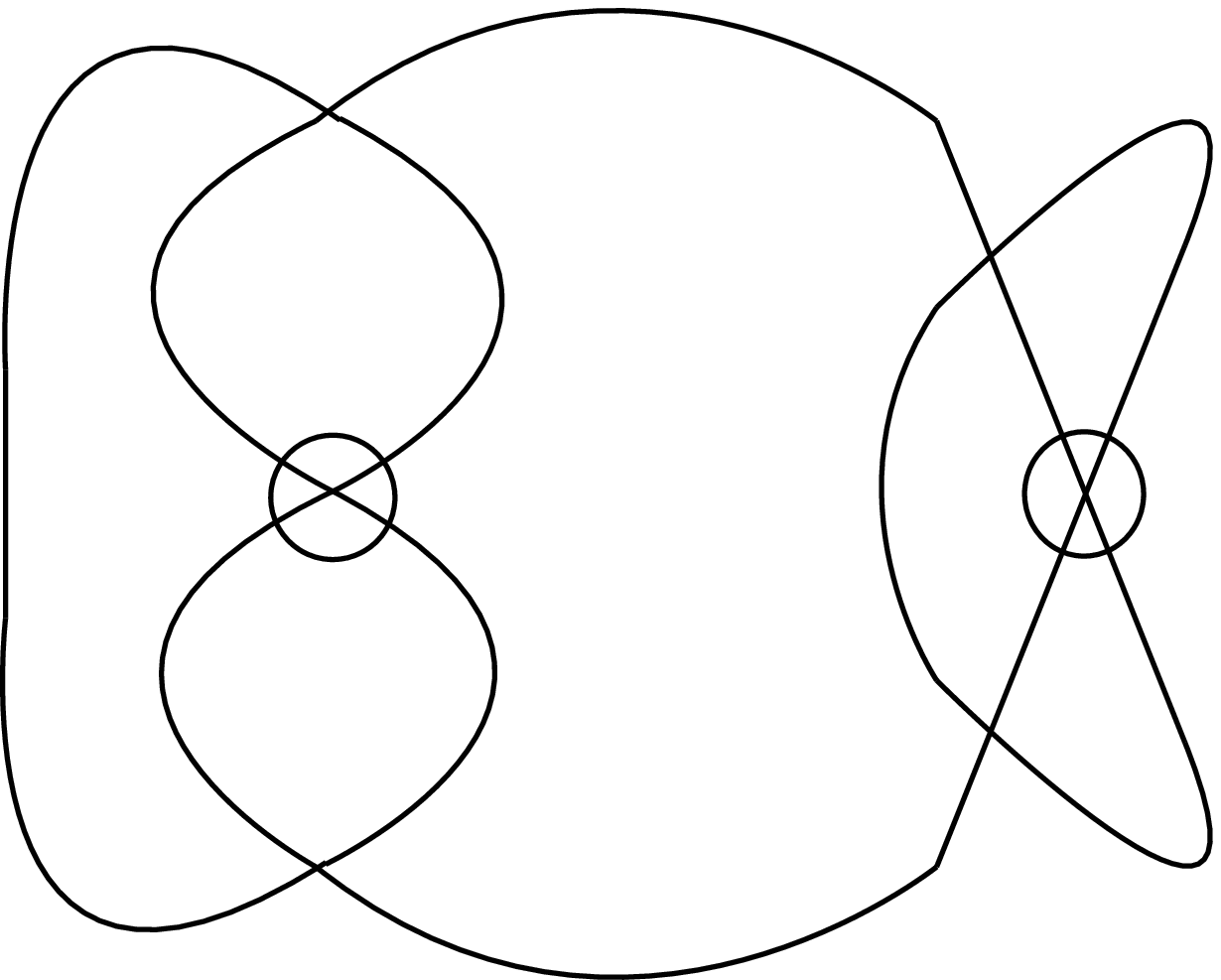}}}
\newcommand{\DFourThree}{\raisebox{-0.25\height}{\includegraphics[height=1.5cm]{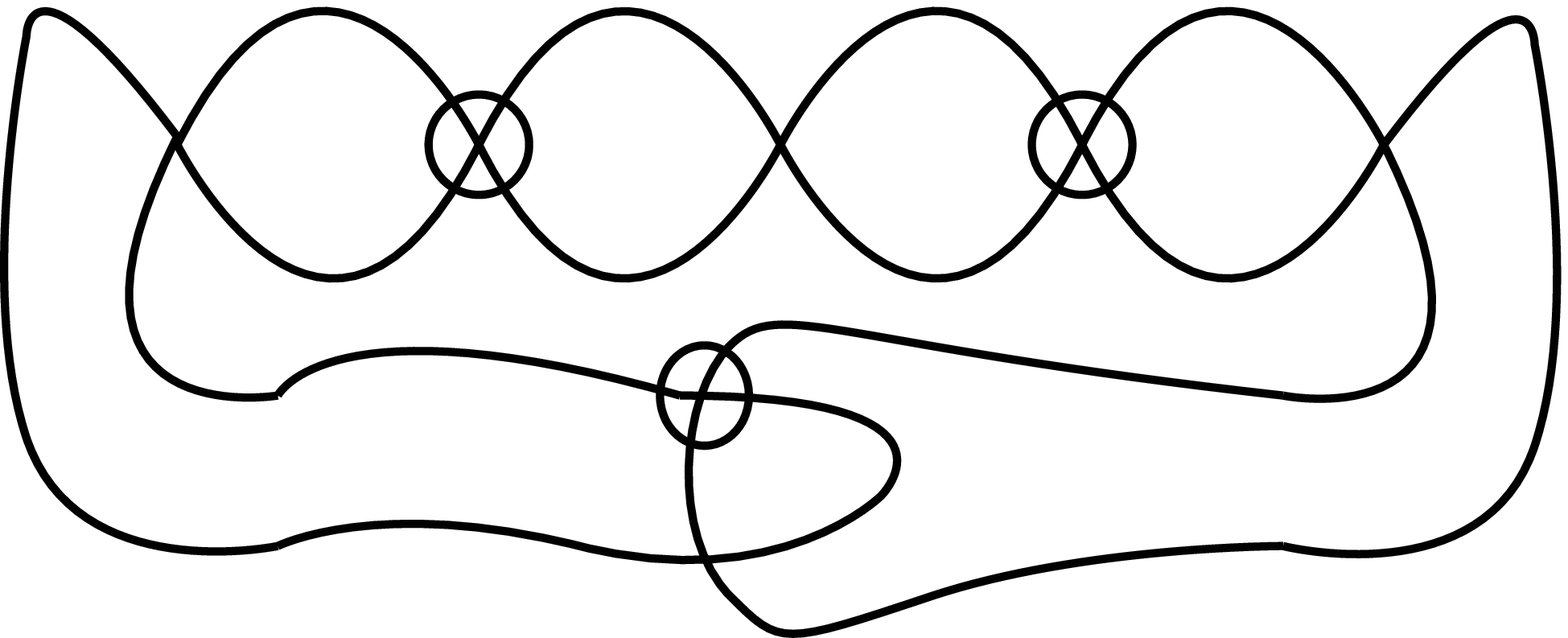}}}
\newcommand{\DFourFour}{\raisebox{-0.25\height}{\includegraphics[height=1.5cm]{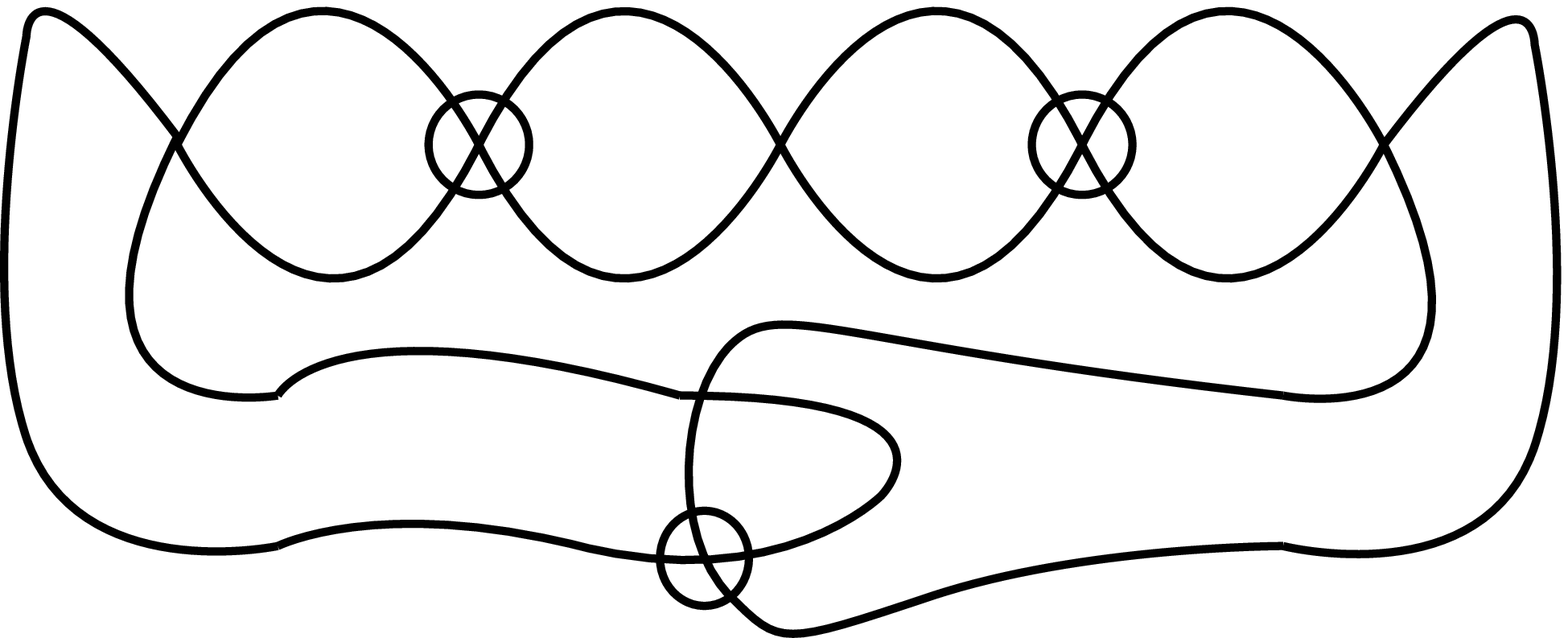}}}

\setcounter{tocdepth}{5}

\begin{document}

\title{Parity, Skein Polynomials and Categorification}


\author{Aaron Kaestner \\
North Park University\\
amkaestner@northpark.edu \\
\\
Louis H. Kauffman \\
University of Illinois at Chicago \\
kauffman@uic.edu }

\maketitle

\begin{abstract} We investigate an application of crossing parity for the bracket expansion of the Jones polynomial for virtual knots. In addition we consider an application of parity for the arrow polynomial as well as for the categorifications of both polynomials. We present a number of examples found through our calculations.  We provide tables of calculations for these invariants on virtual knots with at most 4 real crossings.
\end{abstract}

\textbf{Key Words:} Jones Polynomial, Arrow Polynomial, Parity, Categorification, Khovanov Homology, Virtual Knot, Flat Virtual Knot, Graphical Jones Polynomial, Graphical Arrow Polynomial, Graphical Khovanov Homology
\section{Introduction}

Since the introduction of virtual knot theory, crossing parity has provided a valuable resource for creating invariants.  (See Definition 3.1 in the present paper for our definition of crossing parity.) For instance, given a virtual knot the odd writhe \cite{OddWrithe} (i.e. sum of the signs of the odd crossings) is an easily computable invariant.  Recently, Manturov introduced a parity version of the bracket polynomial \cite{ParityBracket} and described how this construction can pass to Khovanov homology via a filtration on the space of virtual knots.  Our goal here is to investigate these constructions and show how we can apply a similar construction to the arrow polynomial. Along the way we show how known facts about minimal surface genus for virtual knots extend to the categorical setting.\\

The bracket polynomial (and Jones polynomial) as well as the arrow polynomial have a rich history. For an introduction to virtual knot theory we recommend \cite{IntroVKT}.  For an introduction to the bracket polynomial we point the reader to \cite{NewInvariants} and \cite{OnKnots}. Similarly, for the arrow polynomial we recommend the paper \cite{ExtendedBracket} as well as the work of Dye and Kauffman in \cite{DKMinimalSurface} \cite{DKVirtualCrossingNumber}. Note that the arrow polynomial is equivalent to the Miyazawa polynomial as constructed in \cite{Miy06} and \cite{Miy08}. We use the term arrow polynomial as our definition follows the construction introduced by Dye and Kauffman.  \\

The Jones polynomial was categorified by Khovanov in \cite{Khovanov1} and for an introduction to Khovanov homology we point the reader to Bar-Natan \cite{DrorCat},  \cite{DrorCob} and Kauffman \cite{KauffmanKhoHo}.  Since Khovanov's seminal work categorification in classical knot theory has been a fruitful topic of research. Notably, Rasmussen \cite{Rasmussen} used a spectral sequence introduced by Lee \cite{Endomorphism} to produce a lower bound on the slice genus for a classical knot. Recently it was also announced by Kronheimer and Mrowka \cite{UnknotDetector} that Khovanov homology detects the unknot by showing there is a spectral sequence from (reduced) Khovanov homology to instanton Floer homology.  This is exciting news as it provides support for the similar conjecture for the Jones polynomial.  \\

The study of Khovanov homology in relation to virtual knot theory is still rather new.  The introduction of the virtual crossing brings with it a new diagram for the unknot which has the property $d^2 = 0$ (using Khovanov's original definition) only for coefficients in $\mathbb{Z}_2$ (see Figure 6 of \cite{AddnlGradings}). A wonderful explanation of how far we can take Khovanov's original work is given by Viro in \cite{ViroKhoHo}.  To work around this new problem methods have been introduced by Asaeda, Przytycki, and Sikora \cite{APS} as well as Manturov \cite{ArbitraryCoeffs}. The Asaeda-Przytycki-Sikora approach requires not only a diagram but an embedding in a (fixed) thickened surface. On the other hand, Manturov's version introduces a signed, oriented and ordered basis on the states of the cube complex to make all faces commute. \\

Other than its introduction in \cite{DKM} and recent review in \cite{IntroVKT}, little has been written about the categorifications of the arrow polynomial. In this regard we produce lower bounds for the homological width of the fully-graded categorification and we as extend bounds for the surface genus to the categorical setting.  In the Appendix we provide a collection of programs for computing all of the invariants discussed in this paper.  We hope these programs will increase the awareness and interest in the study of these categorifications.\\

\subsection{Virtual Knots}

In \cite{VKT} Kauffman introduced virtual knots and links as a natural extension of classical knots and links.  Virtual knot theory can be thought of both as (1) equivalence classes of embeddings of closed curves in a thickened surface (possibly non-orientable) up to isotopy and handle stabilization on the surface and (2) the completion of the signed oriented Gauss codes (i.e. an arbitrary Gauss code corresponds to a virtual knot while not every Gauss code corresponds to a classical knot.)

We recall in Figure \ref{fig:RMs} the Reidemeister Moves for classical knot diagrams. Figure \ref{fig:VRMs} displays the additional Virtual Reidemeister Moves for the theory in terms of planar diagrams. Here we have introduced the \emph{virtual crossing} which is neither an under-crossing or over-crossing. We represent the virtual crossing by two arcs which cross and have a circle around the crossing point.

\begin{figure}[h!]
\centering
    \includegraphics[width=.7\textwidth]{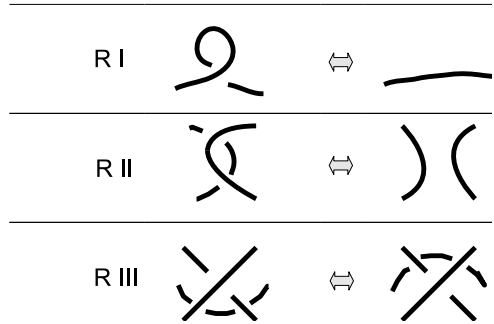}
\caption{Reidemeister Moves}
\label{fig:RMs}
\end{figure}

\begin{figure}[h!]
\centering
    \includegraphics[width=.7\textwidth]{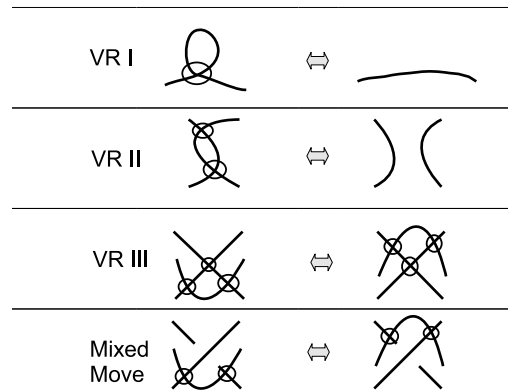}
\caption{Virtual Reidemeister Moves}
\label{fig:VRMs}
\end{figure}

Note that the move in Figure \ref{fig:Forbidden} is not an equivalence relation for diagrams of virtual knots and links. It can be shown that adding this relation and its mirror image to the Virtual Reidemeister Moves allows one to unknot any virtual knot. For this reason we refer to this move as the \emph{forbidden move}. Adding just one of these two forbidden moves yields a nontrivial theory called welded knots \cite{FRR}.

\begin{figure}[h!]
\centering
    \includegraphics[width=.6\textwidth]{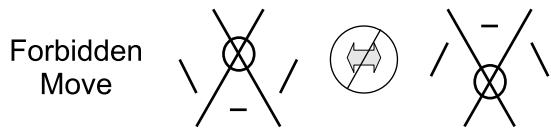}
\caption{The ``Forbidden Move''}
\label{fig:Forbidden}
\end{figure}

\subsection{The Normalized Bracket Polynomial}

For completeness we recall the construction of the bracket polynomial introduced by Kauffman in \cite{NewInvariants}.

\begin{defn}Given a diagram D for a virtual knot K the bracket polynomial of K is defined by the relations in Figures \ref{fig:BracketSmooth}.\\
\end{defn}

\begin{figure}[h!]
\centering
    \includegraphics[width=0.4\textwidth]{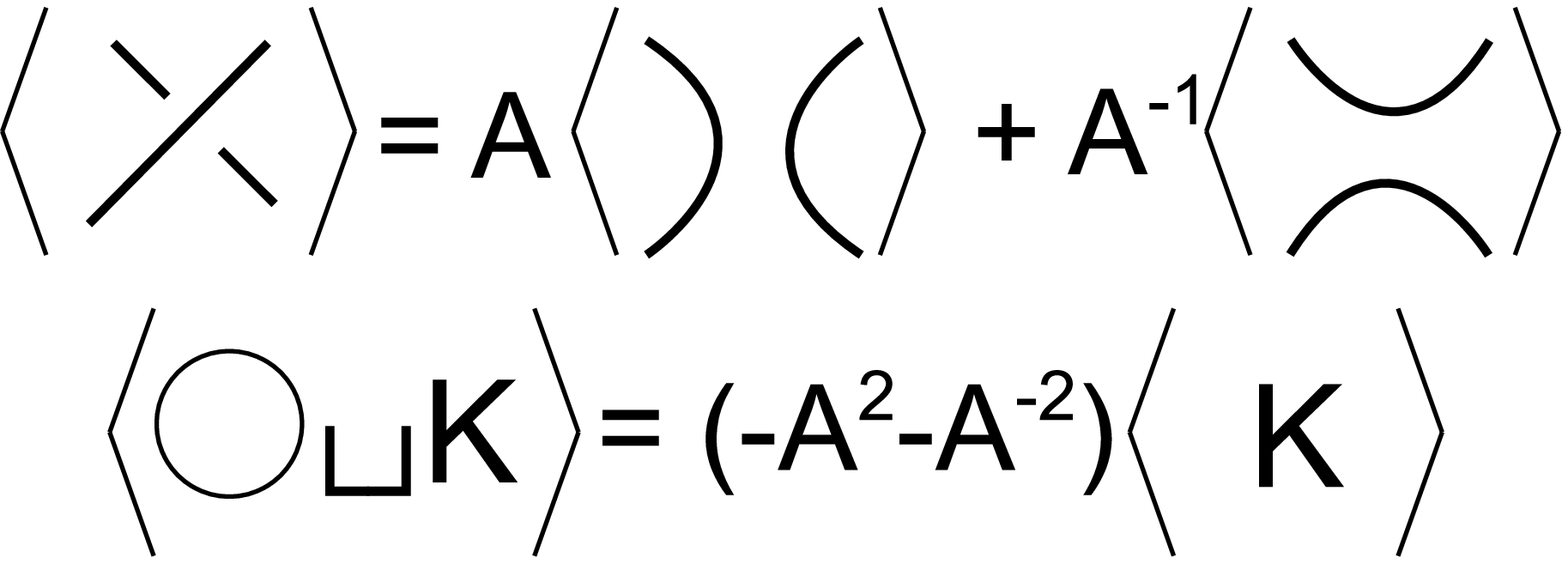}
\caption{Bracket Polynomial Skein Relations}
\label{fig:BracketSmooth}
\end{figure}

To be more precise, suppose D is an n-crossing diagram for the (virtual) knot K. We generate the polynomial by smoothing every crossing in each of the two ways possible. The result of each smoothing at a particular crossing is multiplication by the term, $A$ or $A^{-1}$, following the conventions of Figure \ref{fig:BracketSmooth}, and we take the sum of these two weighted smoothings.  For a particular crossing we call these smoothings the \emph{A-smoothing} and \emph{B-smoothing} respectively.

Next let $s$ = $( s_{1},\ldots, s_{n} )$ where $s_{i} \in \{ 0,1 \} $. Define the \emph{state} of D corresponding to $s$ to be the result of applying an B-smoothing at crossing i when $s_{i} = 0$ and an A-smoothing at crossing i when $s_{i} = 1$.  Furthermore define $\langle D \mid s \rangle$ to be the product of $A$'s and $A^{-1}$'s that label the state $s$ multiplied by the loop value $d^{\| s \|}$ where $d = (-A^2 - A^{-2})$ and $\| s \|$ denotes the number of loops in the state. Here each state is obtained by a choice of smoothing for each crossing and is labeled with the type of smoothing at each of the smoothing sites.

\begin{rem} This definition of the bracket polynomial is shifted from the standard definition. Here the value of a single loop is $d = (-A^2 - A^{-2})$ as opposed to $1$ in the standard definition.  This has the effect that our definition is $d \times \langle \langle K \rangle \rangle$ , where $\langle \langle K \rangle \rangle$ is the standard definition of the bracket polynomial as in \cite{NewInvariants}.
\end{rem}

\begin{defn}
Let $\mathcal{S}$ be the collection of all states of a diagram D for a knot K. We may define the \emph{bracket polynomial of K} to be
\[ \langle K \rangle= \displaystyle\sum\limits_{s \in \mathcal{S}} \langle D \mid s \rangle
\]
\end{defn}

It follows from this definition that the state sum is well defined and the expansion identities in Figure \ref{fig:BracketSmooth} follow from the state sum definition.

\begin{defn}
Given a virtual knot K with diagram D, the \emph{normalized bracket polynomial of K} is given by
\[f_{A}(K) = f_{A}(D) = (-A)^{-3\omega(D)} \langle D \rangle\]
where $\langle D \rangle$ is the bracket polynomial of D and
\[\omega(D) = \textrm{writhe}(D) = (\#\textrm{ positive crossings in D}) - (\#\textrm{ negative crossings in D}).\]
\end{defn}

\begin{thm}
The normalized bracket polynomial is an invariant of (virtual) knots.
\end{thm}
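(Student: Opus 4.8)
The plan is to verify that $f_A$ is unchanged under each of the generating moves of virtual equivalence: the three classical Reidemeister moves RI, RII, RIII, together with the virtual Reidemeister moves and the single mixed detour move of Figure \ref{fig:VRMs}. Since virtual equivalence is generated by these moves, invariance under each of them yields the theorem. The strategy separates into two independent bookkeeping tasks: tracking the behavior of the unnormalized bracket $\langle D \rangle$, and tracking the behavior of the writhe $\omega(D)$, and then checking that the normalizing factor $(-A)^{-3\omega(D)}$ reconciles the two.

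First I would analyze $\langle D \rangle$ directly from the state-sum definition. For the second Reidemeister move I expand the two crossings in all four ways; the two mixed smoothings (an $A$ paired with an $A^{-1}$) contribute coefficient $1$, while the remaining pair picks up a surplus loop weighted by $d = -A^2 - A^{-2}$, and the identity $A^2 + A^{-2} + d = 0$ forces these terms to cancel, leaving the bracket of the simplified diagram. With RII invariance in hand, RIII follows by expanding a single crossing of the triple-crossing configuration, applying RII to each resulting term, and matching against the same expansion of the other side. The virtual moves require no smoothing at all: a virtual crossing is not a site of the skein relation, so each state carries its virtual crossings along inertly, and both the virtual Reidemeister moves and the mixed move preserve the set of states and the loop count $\| s \|$ in each state. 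Consequently $\langle D \rangle$ is invariant under RII, RIII, and all virtual moves, and changes only under RI, where a direct one-crossing expansion gives $\langle D \rangle \mapsto (-A)^{\pm 3}\langle D \rangle$, with the sign of the exponent determined by the sense of the curl.

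Next I would record the elementary behavior of the writhe. Since virtual crossings carry no sign, since RII creates or destroys a canceling pair of opposite-sign crossings, and since RIII merely relocates crossings, $\omega(D)$ is unchanged under RII, RIII, and every virtual move; under RI it changes by exactly $\pm 1$, in lockstep with the curl that alters the bracket. Combining the two analyses completes the proof: under RII, RIII, and the virtual moves both $\langle D \rangle$ and $\omega(D)$ are fixed, so $f_A$ is fixed; under RI the factor $(-A)^{-3\omega(D)}$ changes by $(-A)^{\mp 3}$, which is precisely the inverse of the factor $(-A)^{\pm 3}$ acquired by $\langle D \rangle$, so the product $f_A = (-A)^{-3\omega(D)}\langle D \rangle$ is again unchanged.

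The main obstacle I anticipate is not RII or RI, which are short algebraic computations, but the careful verification of the mixed move and of RIII in the virtual setting. One must check that pushing a strand carrying a real crossing across a virtual crossing, i.e.\ the allowed detour of Figure \ref{fig:VRMs} as opposed to the forbidden move of Figure \ref{fig:Forbidden}, genuinely induces a loop-count-preserving bijection of states, and that the reduction of RIII to RII never secretly requires a forbidden move. Once this combinatorial matching of states is pinned down, all the remaining cancellations are routine.
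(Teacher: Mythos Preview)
Your proposal is correct and is precisely the standard argument; the paper itself does not reproduce it but simply cites \cite{NewInvariants} together with the one-line observation that any loop, regardless of virtual crossings, evaluates to $d$. What you have written is effectively the content of that citation, carried over to the virtual setting with the appropriate remark that virtual crossings are inert under the skein expansion.
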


\begin{proof} Here each loop, regardless of virtual crossings, evaluates as the loop value $d$. See \cite{NewInvariants}
\end{proof}

\subsection{The Arrow Polynomial}

Similarly we recall the construction of the arrow polynomial. Given a diagram D for a virtual knot K the (un-normalized) arrow polynomial of K is defined by the smoothing relations in Figure \ref{fig:APSmooth} analogous to the prior construction of the bracket polynomial and reduction relations in Figure \ref{fig:APReduce}.

\begin{figure}[h!]
\centering
    \includegraphics[width=.6\textwidth]{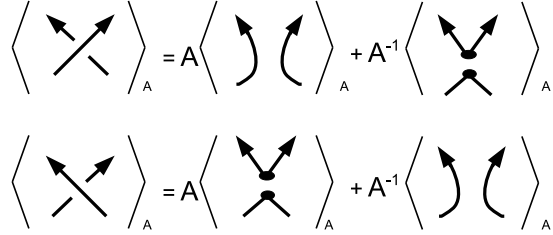}
\caption{Arrow Polynomial Crossing Relations}
\label{fig:APSmooth}
\end{figure}

\begin{figure}[h!]
\centering
    \includegraphics[width=.6\textwidth]{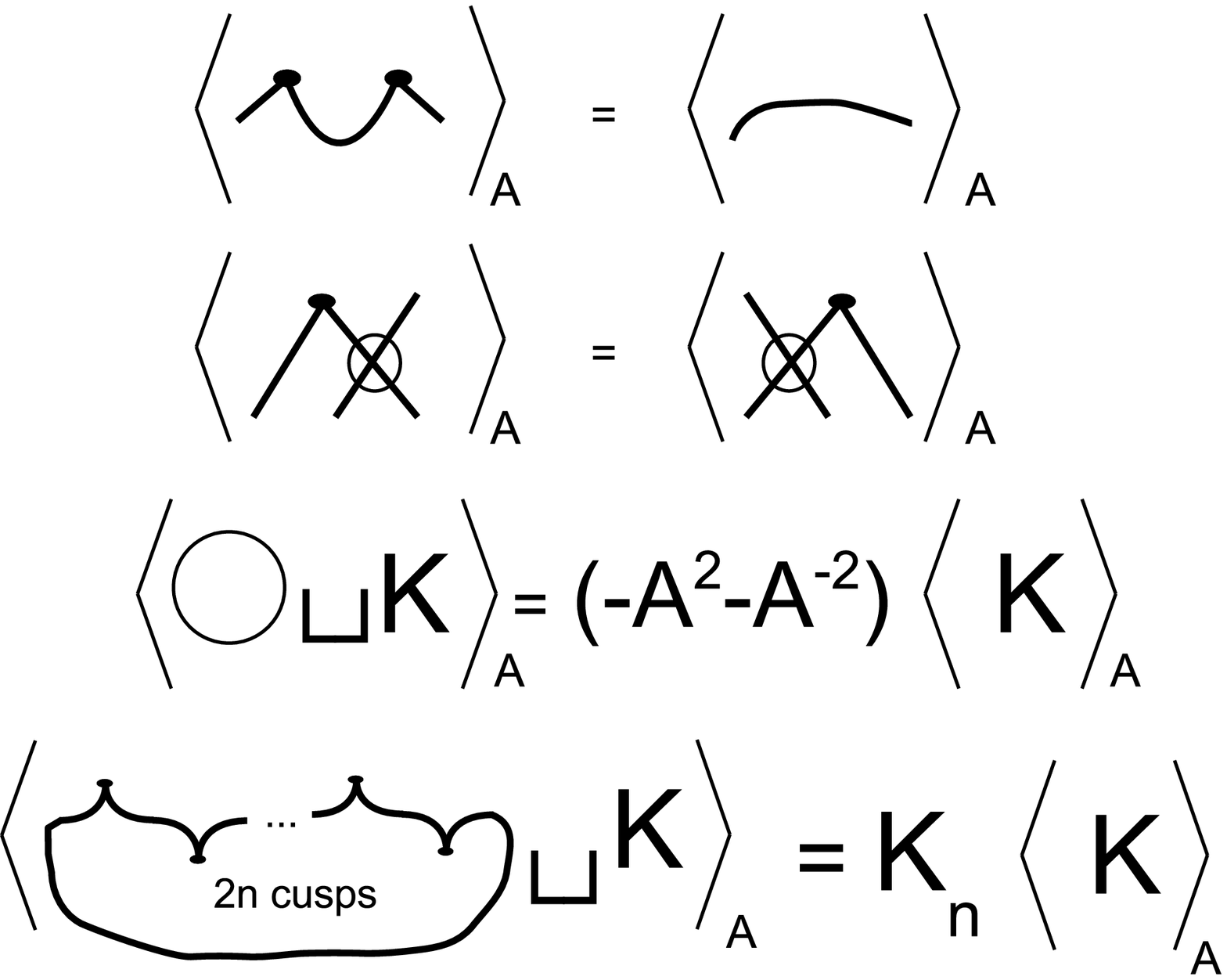}
\caption{Arrow Polynomial Reduction Relations}
\label{fig:APReduce}
\end{figure}

Notice that the smoothing relations for the arrow polynomial depend on the sign of the crossing.  While we still have an A-smoothing and B-smoothing and will refer to these choices, we also differentiate the smoothings by whether they agree with the original (pre-smoothed) orientation of the knot diagram.  If the orientations agree we call these \emph{oriented smoothings} else they are \emph{disoriented smoothings}. The disoriented smoothings create cusps in the state which satisfy the relations in Figure \ref{fig:APReduce}. Moreover, these cusps introduce an infinite family of variables $ \{ K_{n} \}$,$ {n \in \mathbb{N}}$. That is, we cancel consecutive cusps pointing in the same direction (locally both inward or outward) and resolve virtual crossings following the rules in Figure \ref{fig:APReduce}. The remaining $2n$ alternately-oriented cusps on a loop are counted and we assign the loop the value $ \{ K_{n} \}$ when $n > 0$.  We refer to these variables as \emph{arrow numbers} \cite{DKVirtualCrossingNumber} \cite{ExtendedBracket}.

\begin{defn}
Given a virtual knot K with diagram D, the \emph{un-normalized arrow polynomial of K} is given by
\[ \langle D \rangle_{A} = \displaystyle\sum\limits_{s \in \mathcal{S}} \langle D \mid s \rangle_{A}
\]
where $\mathcal{S}$ is the collection of all states of D.\end{defn}

\begin{defn}
Given a virtual knot K with diagram D, the \emph{normalized arrow polynomial of K} is given by
\[AP(K) = AP(D) = (-A)^{-3\omega(D)} \langle D \rangle_{A}\]
where $\langle D \rangle_{A}$ is the arrow polynomial of D and
\[\omega(D) = \textrm{writhe}(D) = (\#\textrm{ positive crossings in D}) - (\#\textrm{ negative crossings in D}).\]
\end{defn}

\begin{thm} The normalized arrow polynomial is an invariant of virtual knots.
\end{thm}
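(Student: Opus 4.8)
The plan is to follow the same Reidemeister-move strategy that underlies the invariance of the normalized bracket polynomial, since the normalized arrow polynomial is built from an essentially identical skein expansion whose only new ingredient is the cusp reduction of Figure \ref{fig:APReduce}. The argument splits into three pieces: first establish that the reduction relations are well-defined on states, then show that the un-normalized polynomial $\langle D \rangle_A$ is a regular isotopy invariant (invariant under the classical moves R2 and R3 and under all the virtual Reidemeister moves of Figure \ref{fig:VRMs}), and finally show that under R1 it is merely multiplied by $(-A)^{\pm 3}$, so that the framing correction $(-A)^{-3\omega(D)}$ upgrades it to a full invariant.

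First I would verify that the loop-labeling by arrow numbers is well-defined. Each state is a disjoint union of loops carrying cusps, and the rules of Figure \ref{fig:APReduce} cancel adjacent like-oriented cusp pairs and push arcs through virtual crossings. I would check that these local moves are confluent: every maximal sequence of reductions applied to a given loop terminates in the same normal form, namely a loop carrying $2n$ alternately-oriented cusps, so that the assignment of the value $K_n$ (with $K_0 = 1$) is independent of the order in which the reductions are performed. This is the step with no analogue in the bracket case, and it is where I expect the principal difficulty to lie, since one must confirm that the virtual-crossing resolutions commute with cusp cancellation and that no forbidden-type configuration arises to obstruct a unique normal form.

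Next, for regular isotopy I would expand the crossings involved in each move and match states on the two sides. For R2, the two smoothings of the two crossings yield one surviving term whose $A$ and $A^{-1}$ factors cancel and whose disoriented cusps reduce back to the original arcs, while the remaining terms cancel in pairs through the loop relation $d = -A^2 - A^{-2}$ exactly as in the bracket computation; the orientation bookkeeping guarantees that the disoriented smoothings generate precisely the cancelling cusp pairs. R3 then follows from R2 by the usual strand-sliding argument, and the virtual moves hold because the reduction relations of Figure \ref{fig:APReduce} are engineered to be invariant under planar isotopy of virtual crossings. For R1, the extra loop contributes the factor $(-A)^{\pm 3}$ with no surviving cusp, identically to the bracket case.

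Finally, since $\langle D \rangle_A$ is a regular isotopy invariant that scales by $(-A)^{\pm 3}$ under a positive or negative curl, while $\omega(D)$ changes by $\pm 1$ under the corresponding R1 move, the product $(-A)^{-3\omega(D)}\langle D \rangle_A$ is unchanged by every classical and virtual Reidemeister move. Hence $AP(K)$ depends only on the virtual knot type of $K$, which completes the proof.
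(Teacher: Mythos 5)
Your outline is correct and is essentially the argument the paper relies on: the paper itself gives no proof here, deferring entirely to \cite{ExtendedBracket}, and your three-step plan (well-definedness of the cusp/arrow-number normal form, regular-isotopy invariance under R2, R3 and the virtual moves, then the $(-A)^{-3\omega(D)}$ correction for R1) is precisely the standard route taken there. You also correctly isolate the only genuinely new ingredient beyond the bracket-polynomial argument, namely confluence of the reduction relations so that each state loop has a well-defined arrow number $K_n$.
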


\begin{proof}See \cite{ExtendedBracket}.
\end{proof}

\begin{rem} The Jordan Curve Theorem implies that the arrow polynomial is equivalent to the normalized bracket polynomial for classical knots.
\end{rem}

\subsubsection{The Arrow Polynomial and Surface Genus}

Recall that virtual knots are in 1-1 correspondence with equivalence classes of knots in thickened oriented surfaces modulo 1-handle stabilization and Dehn twists. This raises the question, given a knot, what is the minimal genus for this embedding.

\begin{defn} Given a virtual knot $K$, the \emph{(orientable) surface genus of $K$}, $s(K)$, is the minimal genus of the surface $S_g$ such that $S^1 \hookrightarrow S_g \times I$ corresponds to $K$.
\end{defn}

\begin{thm}\label{thm:APSurfaceGenus} (Theorem 4.5 of \cite{DKVirtualCrossingNumber}) Let K be a virtual knot diagram with arrow polynomial $\langle K \rangle_{A}$. Suppose that $\langle K \rangle_{A}$ contains a summand with the monomial $K_{i_1}^{e_1}K_{i_2}^{e_2}\cdots K_{i_n}^{e_n}$ where $i_j \neq i_k$ for all $j, k$ in the set $\{1, 2, \ldots, n \}$. Then $n$ determines a lower bound on the genus $g$ of the minimal genus surface in which K embeds. That is, if $n \geq 1$ then the minimum genus is 1 or greater and for $g \geq 2$ if $n > 3g - 3$ then $s(K) > g$.
\end{thm}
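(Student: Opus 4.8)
The plan is to pass from the virtual knot to a minimal surface realization and then convert the algebraic information carried by the monomial into a geometric count of curves. First I would fix a realization of $K$ as a knot in $S_g \times I$ with $g = s(K)$ minimal and project it to a diagram on $S_g$ whose only singularities are the real crossings; the virtual crossings disappear on the genuine surface, being artifacts of the planar drawing. Since the arrow polynomial is a virtual knot invariant \cite{ExtendedBracket}, it may be computed from this surface realization, so the hypothesized summand $K_{i_1}^{e_1}K_{i_2}^{e_2}\cdots K_{i_n}^{e_n}$ must arise from at least one state $\sigma$. Smoothing every real crossing in $\sigma$ leaves a disjoint union of embedded oriented circles on $S_g$, decorated with cusps, and the indices in the monomial simply record the reduced arrow numbers of these circles. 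Note that the variables $K_{i_j}$ all have $i_j \geq 1$, since a loop is assigned the value $K_n$ only for $n > 0$.

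The heart of the argument is a lemma translating arrow numbers into the topology of $S_g$: (i) a state circle carrying a nonzero arrow number $K_{i_j}$ cannot bound a disk in $S_g$, and (ii) two state circles with distinct arrow numbers cannot be isotopic. I would prove (i) by observing that a null-homotopic simple closed curve bounds a disk in $S_g$, across which the alternating cusps can be removed in pairs under the reduction moves of Figure \ref{fig:APReduce}, forcing its arrow number to be $0$. For (ii) I would argue that the reduced arrow number is an isotopy invariant of an oriented curve on $S_g$: two parallel circles cobound an annulus across which the cusp patterns are carried into one another by the same reduction relations, so isotopic circles share an arrow number; contrapositively, distinct arrow numbers force non-isotopic circles. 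Because the components of a state are already disjoint and embedded, \emph{non-isotopic} here coincides with \emph{non-parallel}.

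Granting the lemma, the $n$ circles attached to $K_{i_1},\ldots,K_{i_n}$ form a family of pairwise non-isotopic, essential simple closed curves on $S_g$ that are mutually disjoint. The genus bound is then a matter of surface topology. When $g = 0$ the surface is $S^2$, on which every simple closed curve bounds a disk, so no nonzero arrow number can occur; hence $n \geq 1$ already forces $g \geq 1$. For $g \geq 2$ the maximal number of disjoint, pairwise non-isotopic, essential simple closed curves on a closed orientable genus-$g$ surface is $3g - 3$, the size of a pants decomposition; thus $n \leq 3g - 3$. Taking the contrapositive, $n > 3g - 3$ is incompatible with a genus-$g$ realization, so $s(K) > g$, which is the assertion. (Compare the minimal-genus estimates in \cite{DKVirtualCrossingNumber} and \cite{DKMinimalSurface}.)

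I expect the second step, the lemma identifying arrow numbers with surface topology, to be the main obstacle, since it requires controlling the cusp reduction globally across a disk or an annulus rather than locally at a single crossing, and one must verify that the reductions of Figure \ref{fig:APReduce} genuinely realize the claimed isotopies, in particular that cusp cancellation is compatible with the virtual-crossing resolution rules. The counting step is standard, but some care is needed at $g = 1$, where the formula $3g - 3$ degenerates: the correct statement is that a torus carries disjoint essential curves in only a single isotopy class, which is exactly why the case $n \geq 1$ is stated and handled separately from the case $g \geq 2$.
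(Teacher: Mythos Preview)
Your overall strategy---realize $K$ on a minimal-genus surface, observe that state circles with nonzero arrow number must be essential and that circles with distinct arrow numbers cannot be parallel, then invoke the $3g-3$ bound on disjoint non-parallel essential curves---is exactly the approach the paper sketches (and, in more detail, the approach given in the proof of Theorem~\ref{sgac} and in Lemma~2.2).

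One point deserves correction. Your proposed argument for part (i) of the lemma, that on a disk-bounding curve ``the alternating cusps can be removed in pairs under the reduction moves,'' does not work as stated: the reduction rules only cancel \emph{like-oriented} consecutive cusps, and the arrow number is by definition what survives those moves, so you cannot simply reduce across the disk. The argument the paper actually uses (spelled out in the proof of Theorem~\ref{sgac}) is different in flavor: each cusp is one half of a disoriented smoothing and is therefore paired with a partner cusp elsewhere in the state. If the circle bounds a disk, the Jordan Curve Theorem forbids an inner cusp from being paired with an outer one; parity and orientation forbid the inner cusps of the circle from pairing with one another; so each inner cusp must be matched to a cusp on some other circle inside the disk, and iterating this produces infinitely many cusps, a contradiction. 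The annulus case (ii) is handled by the same pairing-and-separation idea. You correctly identified this lemma as the crux, but the mechanism is cusp-pairing via the smoothing relation together with the Jordan Curve Theorem, not the local reduction moves.
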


\begin{proof}
The proof follows from showing that non-zero arrow numbers correspond to essential curves on the surface. The bounds follow from considering the maximal number of non-intersecting essential curves on the surface which do not bound annuli.
\end{proof}

\begin{rem} In the $g = 1$ case it can be shown that if there is more than 1 non-intersecting essential curve then they must bound an annulus.  Hence if $\langle L \rangle_A$ contains a summand with a monomial of the form $K_{i}K_{j}$ with $i \neq j$ then the minimal surface genus is at least 2.\\
\end{rem}

\subsection{Examples}

\begin{figure}[h!]
\centering
    \includegraphics[height=3.3cm]{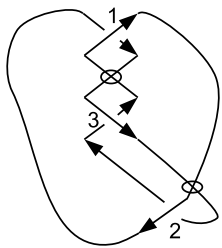}
\caption{Virtual Knot 3.1}
\label{fig:3.1}
\end{figure}

Following the naming conventions in Jeremy Green's virtual knot tables \cite{GreenTables} we consider virtual knot 3.1 as shown in Figure \ref{fig:3.1}. Figures \ref{fig:3.1Bracket} and \ref{fig:3.1Arrow} show how one uses the state-sum formulas from the previous sections to arrive at the respective polynomials. Note that by the previous theorem the arrow polynomial gives that the surface genus of virtual knot 3.1 is at least one.  The diagram above has surface genus 2.

\begin{figure}[h!]
\centering
    \includegraphics[height=0.5\textheight]{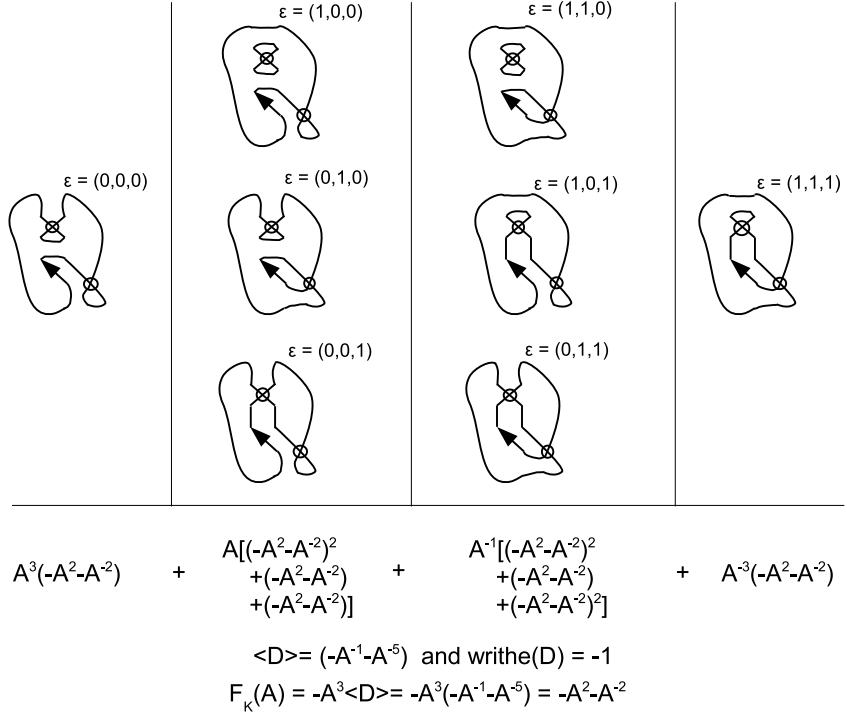}
\caption{Virtual Knot 3.1 Bracket Polynomial State-Sum}
\label{fig:3.1Bracket}
\end{figure}

\begin{figure}[h!]
\centering
    \includegraphics[height=0.5\textheight]{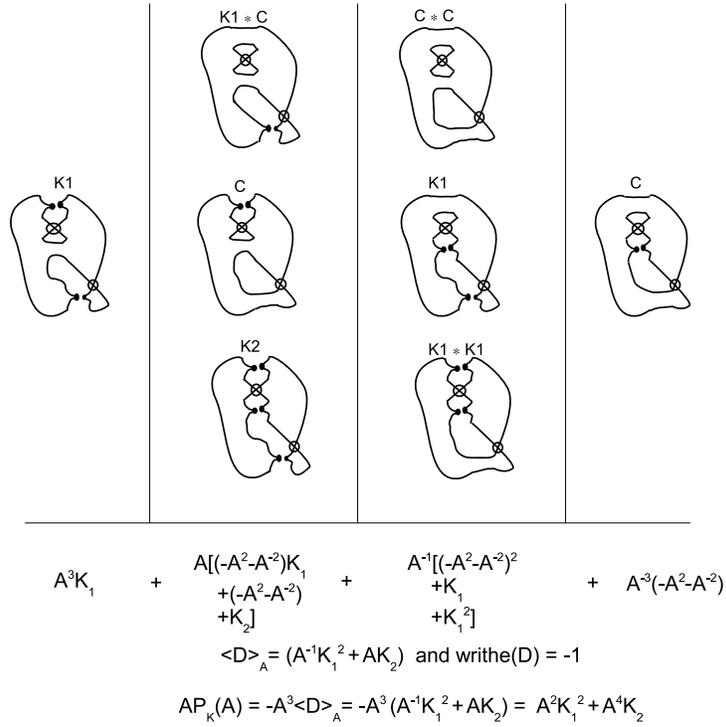}
\caption{Virtual Knot 3.1 Arrow Polynomial State-Sum}
\label{fig:3.1Arrow}
\end{figure}

\section{Categorification}

\subsection{Khovanov Homology for Virtual Knots}

For completeness we recall the definition of Khovanov homology \cite{Khovanov1} \cite{Khovanov2}. Our construction follows closely that of Dror Bar-Natan \cite{DrorCat} and Kauffman \cite{IntroVKT}.  For other descriptions of the construction we point the reader to Khovanov \cite{Khovanov1} \cite{Khovanov2}, Wehrli \cite{WehrliSpanningTrees}, Viro \cite{ViroKhoHo}, Shumakovitch \cite{Shumakovitch}, Elliott \cite{Elliott}, Kauffman\cite{IntroVKT}, and Manturov \cite{ArbitraryCoeffs}. For technical reasons involving the construction of the categorification of the arrow polynomial we will take coefficients over the field $\mathbb{Z}_2$.  It should be noted that the construction of Khovanov homology for virtual knots can be extended to arbitrary coefficients following the construction of Manturov in \cite{ArbitraryCoeffs}. As one would expect, Manturov's definition is equivalent to that of Khovanov for classical knots.

Before we recall the construction, we first rewrite the normalized bracket polynomial in order to simplify the definition. For a given virtual knot or link K with corresponding diagram D, let $c(K)$ denote the crossing number of D. Sending $\langle K \rangle$ to $A^{-c(K)} \langle K \rangle$ and $A$ to $-q^{-1}$ we get the following definition of the bracket polynomial:
\[\langle {\O} \rangle = 1 \text{ ; } \langle \bigcirc K \rangle = (q + q^{-1})  \langle  K \rangle \text{ ; }
\langle \Across \rangle  = \langle \Asmooth \rangle - q \langle \Bsmooth \rangle
\]

And, as pointed out in Bar-Natan \cite{DrorCat} we can summarize the Khovanov bracket via the axioms:
\[\llbracket {\O} \rrbracket = 0 \rightarrow \mathbb{Z}_2 \rightarrow 0 \text{ ; } \llbracket \bigcirc K \rrbracket = V \otimes \llbracket  K \rrbracket  \text{ ; }
\llbracket \Across \rrbracket  =  \mathit{Cone}(\llbracket \Asmooth \rrbracket \stackrel{\text{\tiny d}}{\rightarrow}  \llbracket \Bsmooth \rrbracket\{1\})
\]
where $V = \mathbb{Z}_2 [ X ] / (X^2)$, $\{1\}$ is the ``degree shift by one'' operation on the quantum grading and  $\mathit{Cone}$ is the mapping cone over the differential $d$ which we have yet to define. Note that we will use the enhanced state definition common in the literature, where each enhanced state corresponds to a labeling of the circle by 1 or X.  This has the correspondence
 \[1 \Leftrightarrow q^{+1}  \text{ and } X \Leftrightarrow q^{-1} .\]

\begin{rem}
For convenience we will continue to use the terms A-smoothing and B-smoothing as defined earlier.
\end{rem}

More constructively, consider the collection of enhanced states S arising from applying an A-smoothing and B-smoothing at each crossing and furthermore labeling each of the resulting circles by either 1 or X. Define the \emph{Khovanov complex} $\mathcal{C}^{\bullet,\bullet}$ by setting $ \mathcal{C}^{i,j}$ to be the linear span of states $s \in S$ where $i = n_B(s) = \text{``the number of B-smoothings in s''}$ and $j = j(s) = n_B(s) + \lambda(s)$ where $\lambda(s) =$ ``the number of loops in s labeled 1 minus the number of loops labeled X''. We will refer to $i$ as the \emph{homological grading} and $j$ as the \emph{quantum grading}.\\

For any two states $s, s'$ that differ by replacing an A-smoothing by a B-smoothing at a single site respectively we define a \emph{local differential}, $d_{s, s'}$, such that the homological grading is increased by 1 and the quantum grading is preserved.  Once this is defined, we then have the differential
\[d : \mathcal{C}^{i,j} \rightarrow \mathcal{C}^{i+1,j}\]
 defined by
\[d(s) = \sum_{s'} d_{s, s'}\]\\

All that remains is to determine the possible values for $d_{s, s'}$. Since we are only concerned with resmoothing at a single site there are 3 possible scenarios relating $s$ and $s'$ in the setting of virtual knots.\\

\begin{tabular}{l c}
  \emph{Circle Annihilation} & \annihilation \\
  \\
  \emph{Circle Creation} & \creation \\
  \\
  \emph{Single-Cycle Smoothing} & \singlecycle \\
  \\
\end{tabular}

For any circles in state $s$ not involved in resmoothing we set $d_{s, s'}$ to act as the identity on the enhanced states. For the enhanced circles involved in the resmoothing we define $m, \Delta$ and $\eta$ as follows:\\
\begin{center}
\begin{tabular}{r c c c}
\ldelim \{ {4}{5mm}[\emph{m :}] & $1 \otimes 1$ & $\rightarrow$ & $1$ \\
& $1 \otimes X$ & $\rightarrow$ & $X$ \\
& $X \otimes 1$ & $\rightarrow$ & $X$ \\
& $X \otimes X$ & $\rightarrow$ & $0$ \\
\\
\ldelim \{ {2}{1.5mm}[\emph{$\Delta$:}] & $1$ & $\rightarrow$ & $1 \otimes X + X \otimes 1$ \\
& $X$ & $\rightarrow$ & $X \otimes X$ \\
\\
\ldelim \{ {2}{1.5mm}[\emph{$\eta$:}] & 1 & $\rightarrow$ & $0$ \\
& $X$ & $\rightarrow$ & $0$ \\
\end{tabular}
\end{center}

We are now in a position to define the \emph{Khovanov homology of a knot or link K},
\[ \mathcal{H}(K) = \llbracket K \rrbracket [-n_{-}]\{n_{+}-2n_{-}\} \]
where $[l]$ is the shift operator on the homological grading and $\{l\}$ is the shift operator on the quantum grading.
Moreover we can define the \emph{Khovanov invariant} to be the Poincar\'{e} polynomial:
\[ Kh(K) := \sum_{i,j} t^i q^j dim \mathcal{H}^{i, j}(K) \]

\begin{exa}
Consider the virtual knot 3.1 in Figure \ref{fig:3.1}. The Khovanov complex for the unenhanced states is shown in Figure \ref{fig:3.1KhoHo}. It is a small exercise to show $Kh(VK_{3.1}) = q + q^{-1}$
\end{exa}

\begin{figure}[h!]
\centering
    \includegraphics[width=.96\textwidth]{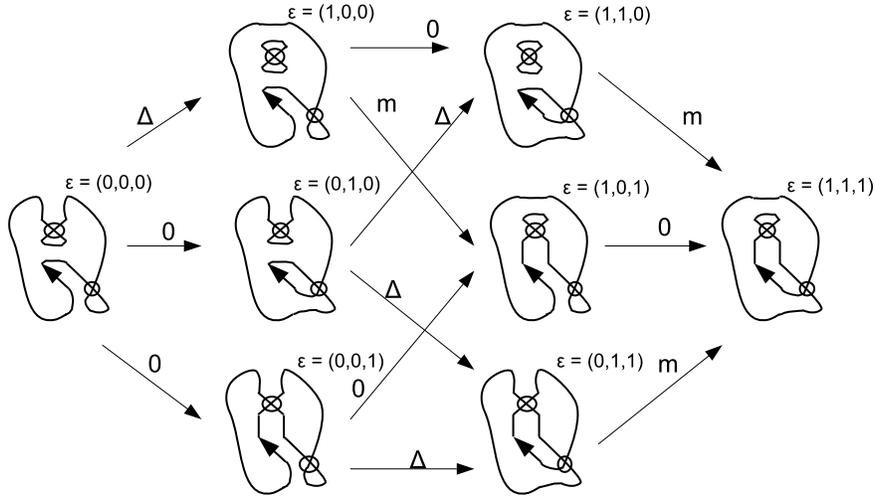}
\caption{Virtual Knot 3.1 Khovanov Homology Complex}
\label{fig:3.1KhoHo}
\end{figure}

We will omit the well known proof that $d^2 = 0$, which amounts to checking all cases in the virtual setting, as well as the proof of invariance under the Reidemeister Moves. These details can be found in \cite{Khovanov1}, \cite{DrorCat}, \cite{DrorCob},  \cite{ArbitraryCoeffs}, and \cite{ViroKhoHo}.\\

If not for the self-imposed $\mathbb{Z}_2$ setting, we could also discuss applications of Lee's spectral sequence \cite{Endomorphism}, \cite{Rasmussen} in the virtual setting.  We plan to return to this subject in a future paper.\\

\subsection{Categorifications of the Arrow Polynomial}\label{rem:ArrowCat}

In 2009 Dye, Kauffman and Manturov \cite{DKM} introduced two categorifications of the arrow polynomial for virtual knots.  Both constructions are homology theories defined over $\mathbb{Z}_{2}$ and agree with Khovanov homology over $\mathbb{Z}_{2}$ for classical knots. We remark that this construction is similar in flavor to the construction of Khovanov homology described in the previous section. The major difference in the constructions presented here are the considerations of additional gradings arising from the arrow numbers. We will use the same renormalization for categorification of the arrow polynomial as we did in Khovanov homology, namely sending $\langle K \rangle$ to $A^{-c(K)} \langle K \rangle$ and $A$ to $-q^{-1}$, where $c(K)$ is the crossing number for the diagram of K.

We first recall the construction introduced in \cite{DKM} introducing the multiple grading and vector grading.  Consider the collection of enhanced states S arising from applying an A-smoothing and B-smoothing at each crossing, where A- and B-smoothings are defined as in Figure \ref{fig:APSmooth}, and furthermore labeling each of the resulting circles by either 1 or X. Define the \emph{arrow complex} $\mathcal{C}_{A}^{\bullet,\bullet,\bullet,\bullet}$ by setting $ \mathcal{C}_{A}^{i,j,m,v}$ to be the linear span of enhanced states $s \in S$ where $i = n_B(s) = \text{``the number of B-smoothings in s''}$ and $j = j(s) = n_B(s) + \lambda(s)$ where $\lambda(s) =$ ``the number of loops in s labeled 1 minus the number of loops labeled X''. We will refer to $i$ as the \emph{homological grading} and $j$ as the \emph{quantum grading}.\\

Given a state s define the \emph{multiple grading of s}, $mg(s)$, to be the set of arrow numbers of s.\\

Given an enhanced state s, consider the collection $\Lambda_s$ of enhanced circles carrying nonzero arrow numbers.  For a circle $c \in \Lambda$ with arrow number $p$ let the \emph{order} of $c$, $o(c)$,  be the value of $k$ such that $p = 2^{k-1}*l$ with $gcd(2,l)=1$. Define the function $vg(c)$ by $vg(c) = e_{o(c)}$ if c is labeled by X and $vg(c) = -e_{o(c)}$ if c is labeled by 1, where ${e_1, e_2, \ldots }$ is the standard basis for $\mathbb{R}^{\infty}$.  Then the \emph{vector grading}, $vg(s)$, is given by
\[ vg(s) = \sum_{c \in \Lambda} vg(c)
\]

\begin{exa}The state $s = \gradingexample$ has
\[mg(s) = \{K_1, K_2\} \text{ and } vg(s) = (-2,1,0,0,0,...)\]
\end{exa}

As before, for any two states $s, s'$ that differ by an A- to B- resmoothing  at a single site it remains to define a \emph{local differential} $d_{s, s'}$ such that the homological grading is increased by 1 and the quantum grading, multiple grading and vector grading are all preserved.  Once this is defined, we have the differential
\[d : \mathcal{C}_{A}^{i,j, m, v} \rightarrow \mathcal{C}_{A}^{i+1,j, m, v}\]
 defined by
\[d(s) = \sum_{s'} d_{s, s'}\]\\

Finally, The local differential $d_{s, s'}$ is defined by $d_{s, s'} = \tilde{\partial}_{s, s'} \circ \partial_{s, s'} $ where $\partial_{s, s'}$ is the Khovanov local differential between the corresponding states and $\tilde{\partial}_{s, s'}$ is the projection map preserving the multiple grading and vector grading. It is a short exercise to show that this satisfies the requisite $d^2 = 0$ through checking all possible cases. A key observation for the proof is noting that for circle annihilation and circle creation arrow numbers are $\pm$-additive while a single cycle resmoothing causes the arrow number to change by 1 as is shown in Figure \ref{fig:ArrowNumberResmoothing}.

\begin{figure}[h!]
\centering
    \includegraphics[width=.5\textwidth]{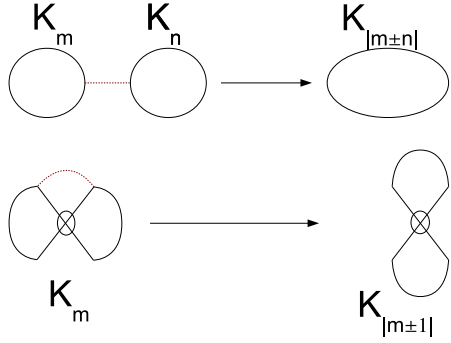}
\caption{The Effect of Resmoothing on Arrow Numbers}
\label{fig:ArrowNumberResmoothing}
\end{figure}

We are now in a position to define the homology for the fully-graded categorification of the arrow polynomial of a knot or link K, $\mathcal{H}{_A}(K)$, by renormalizing analogously to Khovanov homology. Moreover we can define the \emph{fully-graded arrow categorification invariant} to be the Poincar\'{e} polynomial:
\[ AKh(K) := \sum_{i,j \in \mathbb{Z}, m\in M(D), v\in V(D)} m \times v \times t^i \times q^j \times dim \mathcal{H}{_A}^{i, j, m, v}(K)
\]
where
\[S(D) = \{\text{enhanced states of D}\} \text{ , } M(D) = \bigcup_{s \in S} mg(s) \text{ and } V(D)= \bigcup_{s \in S} vg(s)\]
for a diagram D of K.\\

\begin{exa}
Consider the virtual knot 3.1 in Figure \ref{fig:3.1}. The cube complex for the unenhanced states is shown in Figure \ref{fig:3.1ArrowCat}. It is a small exercise to show
\[AKh(VK_{3.1}) = \frac{\text{vg}(2,1) K[2]}{q^3 t}+\frac{\text{vg}(1,2) K[1]}{q^3}+\frac{\text{vg}(2,-1) K[2]}{q t}+q \text{vg}(1,-2) K[1]+\frac{2 K[1]}{q}\]
\end{exa}

\begin{rem} To translate the polynomial into the form of the definition one only has to see how to read the multiple grading and vector grading for a given monomial.  The multiple grading is simply the collection of coefficients of the form $K[i] (= K_i)$. The vector grading is obtained as follows.  Each coefficient of the form $\text{vg}(i,a)$ corresponds to having coefficient $a_i = a$ when the vector grading is written as $\displaystyle\sum\limits_{i \in \mathbb{N}} a_i e_i$. The product of vector gradings corresponds to the sum of the individual gradings.  For example $\text{vg}(2,1)\text{vg}(1,-1)$ corresponds to the vector grading $(-1,1,0,0,0,\ldots)$.
\end{rem}

\begin{figure}[h!]
\centering
    \includegraphics[width=.9\textwidth]{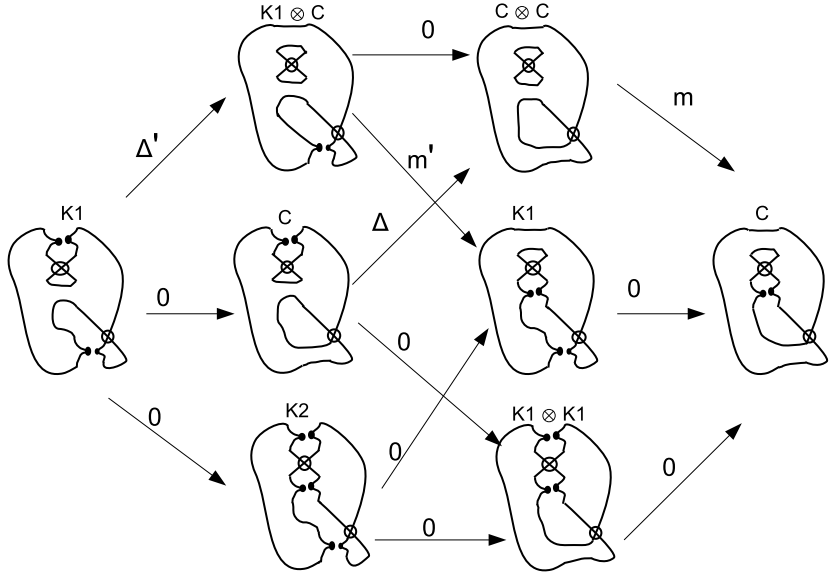}
\caption{Virtual Knot 3.1 Arrow Polynomial Categorification Complex}
\label{fig:3.1ArrowCat}
\end{figure}

We will again omit the proof of invariance under the Reidemeister Moves, which is similar to the equivalent proof for Khovanov homology. Similarly we leave out most of the proof that $d^2 = 0$ other than to point out why we work over $\mathbb{Z}_2$. As with Khovanov homology, the proof follows by showing that the differential commutes for every possible face with every possible grading configuration in the cube complex. Much of this is follows from the additivity relations for arrow numbers under resmoothing. However, the face in Figure \ref{fig:badface} is an example of a general type necessitating working over $\mathbb{Z}_2$.\\

\begin{figure}[h!]
\centering
    \includegraphics[width=.6\textwidth]{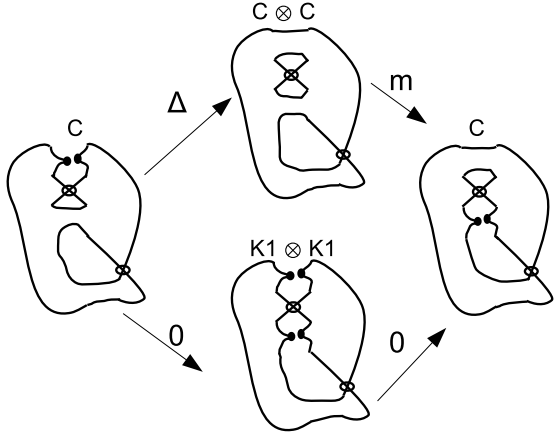}
\caption{An example of the necessity of working over $\mathbb{Z}_2$}
\label{fig:badface}
\end{figure}

In \cite{DKM} a simpler categorification for the arrow polynomial is introduced.  We can arrive at this construction through a simple modification of the differential $\tilde{\partial}_{s, s'}$. Suppose we represent the vector grading as $vg(s) = \sum_{i \in \mathbb{N}} a_{i_s} * e_i$ where ${e_1, e_2, \ldots }$ is the standard basis for $\mathbb{R}^{\infty}$. Rather than preserving the multiple grading and vector grading, $\tilde{\partial}_{s, s'}$ is defined by\\

\begin{tabular}{ccc}
\ldelim \{{2}{9mm}[$\tilde{\partial}_{s, s'} = $] & 1  $\text{ , if } a_{1_s} \equiv a_{1_{s'}} \text{ mod 2}$\\
   & 0  $\text{ , if } a_{1_s} \not\equiv  a_{1_{s'}} \text{ mod 2}$\\
\end{tabular}\\

We have constructed a Mathematica program to calculate all of the categorifications mentioned in this section for knots with at most 6 classical crossings based on Jeremy Green's table \cite{GreenTables}. We have been unable to find two virtual knots that are distinguished by the fully-graded categorification and not by the simpler categorification. Additionally, we have no examples of knots which are distinguished from the unknot by the categorification and not by the arrow polynomial.  \\

\subsubsection{The Fully-Graded Arrow Categorification and Surface Genus}

We may extend Theorem \ref{thm:APSurfaceGenus} on surface genus bounds produced by the arrow polynomial to the fully-graded categorification as follows.

\begin{thm}\label{sgac} Let K be a virtual knot diagram with fully-graded arrow categorification invariant $ AKh(K)$. Suppose that $ AKh(K)$ contains a summand having multiple grading $\mathcal{M}$, a nonempty set of arrow numbers, with $\left| \mathcal{M} \right|=n$. Then $n$ determines a lower bound on the genus $g$ of the minimal genus surface in which K embeds. That is, if $n = 1$ then the minimum genus is at least 1, if $n = 2$ then the minimum genus is 2 or greater and for $n \geq 3$ if $n > 3g - 3$ then $s(K) > g$.\\
\end{thm}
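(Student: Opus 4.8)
The plan is to reduce Theorem~\ref{sgac} to the already-established Theorem~\ref{thm:APSurfaceGenus} by showing that the multiple grading of a surviving summand in $AKh(K)$ witnesses a genuine collection of distinct nonzero arrow numbers appearing in the arrow polynomial $\langle K \rangle_A$. The conceptual point is that the fully-graded categorification is built so that its graded Euler characteristic recovers the arrow polynomial; hence a monomial $K_{i_1}^{e_1}\cdots K_{i_n}^{e_n}$ of the type required by Theorem~\ref{thm:APSurfaceGenus} ought to be detectable from the homological data. First I would make precise the relationship between the multiple grading $mg(s)$ of an enhanced state and the arrow-number monomial it contributes to the state sum: by construction, $mg(s)$ is exactly the \emph{set} of distinct arrow numbers carried by the loops of $s$, so a state whose multiple grading $\mathcal{M}$ has $|\mathcal{M}|=n$ contributes a monomial involving $n$ pairwise-distinct arrow numbers $K_{i_1},\ldots,K_{i_n}$. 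The surface-genus argument of Theorem~\ref{thm:APSurfaceGenus} depends only on the existence of such a monomial, not on its coefficient, so it suffices to produce one.

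The crux is therefore to pass from ``$AKh(K)$ contains a summand with multiple grading $\mathcal{M}$ of size $n$'' to ``$\langle K \rangle_A$ contains a monomial with $n$ distinct arrow numbers.'' The key step is to observe that the local differential $d_{s,s'} = \tilde\partial_{s,s'}\circ\partial_{s,s'}$ was defined precisely so that $\tilde\partial_{s,s'}$ preserves the multiple grading; consequently the Khovanov complex splits as a direct sum of subcomplexes indexed by the multiple grading $\mathcal{M}$, and the homology in a fixed multiple grading is computed entirely within that graded piece. Since nonzero homology in multiple grading $\mathcal{M}$ forces the chain groups in that grading to be nonzero, there must exist at least one enhanced state $s$ with $mg(s)=\mathcal{M}$; and any such state, by the definition of $mg$, carries $n$ loops with distinct nonzero arrow numbers $i_1,\ldots,i_n$. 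That state contributes the monomial $K_{i_1}\cdots K_{i_n}$ to $\langle K \rangle_A$, and this monomial satisfies the hypothesis of Theorem~\ref{thm:APSurfaceGenus} with $i_j\neq i_k$. I would then invoke Theorem~\ref{thm:APSurfaceGenus} directly to obtain the bound $n>3g-3\Rightarrow s(K)>g$ for $g\geq 2$, together with the $n\geq 1$ case.

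For the small cases I would separate out $n=1$ and $n=2$ explicitly, since the general inequality $n>3g-3$ only kicks in for $g\geq 2$. The $n=1$ conclusion (genus at least $1$) is immediate from the corresponding clause of Theorem~\ref{thm:APSurfaceGenus}. The $n=2$ conclusion (genus at least $2$) does not follow from the inequality alone and instead requires the refinement recorded in the Remark following Theorem~\ref{thm:APSurfaceGenus}: on a genus-$1$ surface, two disjoint essential curves must cobound an annulus, so two \emph{distinct} arrow numbers cannot coexist at genus $1$, forcing the genus up to at least $2$. I would cite that remark to handle $n=2$.

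The step I expect to be the main obstacle is justifying rigorously that nonzero homology in a fixed multiple grading genuinely implies a nonzero chain group in that grading carried by an \emph{honest} enhanced state with the full set of distinct arrow numbers, rather than the monomial arising only as a cancellation artifact in the Euler characteristic. The clean way around this is exactly the splitting observation: because $\tilde\partial_{s,s'}$ preserves $mg$, the differential never mixes multiple gradings, so $\mathcal{H}_A^{i,j,\mathcal{M},v}(K)\neq 0$ can only happen if some chain group $\mathcal{C}_A^{i,j,\mathcal{M},v}$ is nonzero, and chain groups are spanned by actual enhanced states. I would need to state this splitting carefully and confirm that it is forced by the definition of the local differential given earlier in the excerpt; once that is in place the rest is a direct appeal to the decategorified theorem.
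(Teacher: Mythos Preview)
Your reduction to Theorem~\ref{thm:APSurfaceGenus} has a real gap. You correctly observe that the complex splits along multiple gradings and that nonzero homology in grading $\mathcal{M}$ forces the existence of an enhanced state $s$ with $mg(s)=\mathcal{M}$. But from there you jump to ``this monomial satisfies the hypothesis of Theorem~\ref{thm:APSurfaceGenus}.'' It does not: the hypothesis of that theorem is that $\langle K\rangle_A$ \emph{contains} the monomial, i.e.\ that its coefficient in the polynomial is nonzero. A single state contributing $K_{i_1}\cdots K_{i_n}$ to the state sum may be cancelled by other states in the same multiple grading when one passes to the Euler characteristic. Your splitting argument rules out cancellation at the level of \emph{chain groups}, but not at the level of the alternating sum of their ranks. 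Indeed, the interest of Theorem~\ref{sgac} is precisely that it may be strictly stronger than Theorem~\ref{thm:APSurfaceGenus}: one can have nonzero homology in a multiple grading whose contribution to the arrow polynomial vanishes. So a black-box appeal to Theorem~\ref{thm:APSurfaceGenus} cannot recover the result.

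The paper's proof repairs exactly this point, and you are close to it. Rather than routing through the polynomial, use the invariance of $AKh$ itself: since the multiple grading is part of an invariant of $K$, the same nonzero homology in grading $\mathcal{M}$ appears for \emph{every} diagram of $K$, in particular for a diagram realized on a minimal-genus surface $S_g$. Your splitting argument then produces a state $s$ \emph{on that surface} with $mg(s)=\mathcal{M}$. From here one cannot invoke Theorem~\ref{thm:APSurfaceGenus} (there is no polynomial hypothesis available), but one can rerun its underlying geometric argument: each loop of $s$ carrying a nonzero arrow number is essential on $S_g$ (a cusp-pairing plus Jordan Curve Theorem argument), and loops with distinct arrow numbers cannot cobound an annulus. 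The Hatcher bound on disjoint pairwise non-parallel essential curves then gives the stated inequalities, including the $n=2$ case. So the missing ingredient is the transport to a minimal-genus representative via invariance of $AKh$, followed by the direct essential-curve argument in place of the polynomial theorem.
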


We add a bit more detail to the earlier sketch to highlight why we only get the extension in the fully-graded categorification.  The proof relies on the following fact from \cite{Hatcher}.\\

\begin{lem} Consider a collection $\mathcal{A}$ of non-intersecting essential curves (i.e. not contractible) on an orientable surface $S_g$ of genus $g$ no pair of which co-bound an annulus. If $g=1$ then $\left|\mathcal{A}\right| \leq 1$ and if $g \geq 2$ then $\left|\mathcal{A}\right| \leq 3g-3$.\\
\end{lem}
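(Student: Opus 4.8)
The plan is to reduce the statement to the standard structure theory of \emph{pants decompositions} of surfaces, the key quantitative input being the Euler characteristic. The central fact to exploit is that a maximal collection of the type described cuts $S_g$ into pairs of pants (three-holed spheres), and every such decomposition has a number of curves determined by $g$ alone. Since the hypotheses on $\mathcal{A}$—disjoint, essential, and no two cobounding an annulus—say precisely that its members are pairwise non-isotopic simple closed curves, this is exactly the setting in which pants decompositions apply.

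First I would dispose of the $g = 1$ case by hand. On the torus every essential simple closed curve is determined up to isotopy by its slope, and two disjoint essential curves of distinct slopes cannot coexist; hence any two disjoint essential curves are isotopic and therefore cobound an annulus. The hypothesis thus forces $\left|\mathcal{A}\right| \leq 1$. For $g \geq 2$ I would enlarge $\mathcal{A}$ to a \emph{maximal} collection $\mathcal{A}'$ of disjoint essential simple closed curves, no two cobounding an annulus; since $\left|\mathcal{A}\right| \leq \left|\mathcal{A}'\right|$, it suffices to bound $\left|\mathcal{A}'\right|$.

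The crux is the claim that every component of $S_g$ cut along $\mathcal{A}'$ is a pair of pants. This follows from maximality: any complementary piece that fails to be a three-holed sphere—because it has positive genus, or more than three boundary circles, or a compressible boundary component—contains a further essential simple closed curve disjoint from and non-isotopic to the existing ones, contradicting maximality of $\mathcal{A}'$. This is the step I expect to be the main obstacle, as it requires a careful case analysis of the possible complementary surfaces, organized by genus and by number of boundary circles, producing the extra curve in each case. I would rely here on the cited result from \cite{Hatcher} for the surface-topological input.

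Once the complement is known to be a union of $P$ pairs of pants, the counting is immediate. Cutting along simple closed curves preserves the Euler characteristic (each curve contributes $\chi = 0$ and is replaced by two boundary circles, themselves of Euler characteristic $0$), and each pair of pants has $\chi = -1$, so $2 - 2g = \chi(S_g) = -P$, giving $P = 2g - 2$. Each pair of pants carries three boundary circles while each curve of $\mathcal{A}'$ is glued from exactly two such circles, whence $2\left|\mathcal{A}'\right| = 3P$ and $\left|\mathcal{A}'\right| = 3(g-1) = 3g - 3$. Combining this with the torus case yields the stated bounds $\left|\mathcal{A}\right| \leq 1$ for $g = 1$ and $\left|\mathcal{A}\right| \leq 3g - 3$ for $g \geq 2$.
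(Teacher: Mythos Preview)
Your argument is correct and is the standard pants-decomposition proof. Note, however, that the paper does not supply its own proof of this lemma: it simply states the result as a known fact drawn from \cite{Hatcher}, so there is no paper-side argument to compare against beyond observing that your outline is precisely the Hatcher argument being invoked.
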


\textit{Proof of \ref{sgac}:} The proof follows by the above lemma once we show that multiple grading corresponds to n non-intersecting essential curves of which no pair co-bound an annulus. Since the multiple grading is an invariant of the knot we have that any embedding into $S_g \times I$ for $K$ must contain a state $s$ with  $mg(s) = \mathcal{A}$.\\
To see that each element of $\mathcal{A}$ is an essential curve suppose for contradiction $K_{i_l} \in \mathcal{A}$ bounds a disk.  By the disoriented smoothing relation each cusp in $K_{i_l}$ is paired with another cusp somewhere in $s$ corresponding to the other half of the smoothing. Since $K_{i_l}$ bounds a disk (in the projection to $S_g$) the Jordan Curve Theorem implies the interior and exterior cusps cannot be paired.  If we consider only the internal cusps in $K_{i_l}$ they too cannot be paired with one-another (for odd arrow numbers this follows from parity and for even arrow numbers this follows from orientation.) Thus an inner cusp of $K_{i_l}$  must be paired with the cusp either of another $K_{i_j}$ or of circle with an even number of canceling cusps. In either case we produce another cusps with which to repeat the argument.  Since our knot has a finite number of crossing (hence a finite number of cusps) this is a contradiction. A similar argument shows that given $K_{i_l}, K_{i_j} \in \mathcal{A}$ with ${i_l} \neq {i_j}$ they cannot co-bound an annulus.\qed

\subsection{Categorification and Width}

The subject of the width of Khovanov homology for various classes of knots has been of interest since Khovanov's seminal work \cite{Khovanov1}. Here we produce a definition of width for the virtual setting, recall known results in the classical setting and give some basic results in the virtual knot setting.\\

In the classical case it was noticed early on that when plotting the homological degree versus the quantum degree for the support of the Khovanov invariant the majority of small knots were supported on 2 diagonals corresponding to the signature of the knot $\pm 1$.  In the classical case we say that a knot is \emph{H-thin} if its Khovanov homology is supported on 2 diagonals corresponding to lines $t-2q = constant$, else it is called \emph{H-thick}. The thickness of a number of classes of classical knots is known and a recent summary of the known results can be found in \cite{Elliott}.  In the case of alternating knots Lee \cite{Endomorphism} proved that they are H-thin. Since all alternating knots are of even parity (which we will define shortly), Lee's proof extends to alternating virtual knots as was pointed our by Viro \cite{ViroKhoHo}.  \\

In the virtual knot setting we need to re-examine our definition of H-thick and H-thin. If we wish to use the thickness of the homology to determine if the Khovanov homology holds additional information over the Jones polynomial we quickly run into trouble in the virtual case. It is no longer enough to determine if the homology is supported on 2 diagonals to determine if the homology contains more information than the polynomial (not to mention that the signature is not well-defined). For instance virtual knot 2.1 \twoone \textrm{ has} bracket polynomial
\[ (A^4 + A^6 - A^{10})(-A^2 - A^{-2})\]
and Khovanov invariant
\[(q^6 + q^4)t^2 + (q^4 + q^2) t + q^3 + q.\]
By the classical definition virtual knot 2.1 is H-thick as it is supported on 4 diagonals.  However, the Khovanov homology does not hold any additional information.  \\

Two good questions to ask are (1) for a given virtual knot, on how many diagonals is the homology $\mathcal{H}(K)$ supported and (2) what is the maximal width between the diagonals (i.e. $c_{max} - c_{min}$ for the supported diagonals $t - 2q = c$.)  We call the solution to the first the \emph{thickness} of the homology and denote it by $Th(\mathcal{H}(K))$ for a given knot K. The second is referred to as the \emph{width} of the homology and denoted by $W(\mathcal{H}(K))$.

More can be said about knots with orientable atoms.  We recall from Manturov \cite{ManturovKnotTheory} that for a given knot K, an atom for K is a pair $(M, \Gamma)$  where M is a surface without boundary (not necessarily connected or orientable) and $\Gamma$ is a 4-valent graph on M such that $(M, \Gamma)$ admits a checkerboard coloring. We say an atom is orientable if M is orientable.\\

In \cite{ManturovWidth} Manturov proves that for knots with orientable atoms we have:\\

\begin{thm} For a knot K with orientable atom, $Th(Kh(K)) \leq g(K) + 2$ where $g(K)$ is the Turaev genus (or atom genus) of the knot.\\
\end{thm}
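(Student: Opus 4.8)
The plan is to track the single diagonal grading $\delta(s)=j(s)-2i(s)$ and to bound its spread on the support of $Kh(K)$ by $2g(K)+2$. Since $K$ is a knot, the quantum grading $j$ of every nonzero homology class has a fixed parity, so consecutive occupied diagonals differ by $\delta\mapsto\delta\pm2$ and $Th(Kh(K))=\tfrac12(\delta_{\max}-\delta_{\min})+1$; a spread of $2g(K)+2$ then yields exactly $Th(Kh(K))\le g(K)+2$. From $i=n_B(s)$ and $j=n_B(s)+\lambda(s)$ one gets the state-level identity $\delta(s)=\lambda(s)-n_B(s)$, and because $\delta$ is constant under the normalization shift $[-n_-]\{n_+-2n_-\}$ it suffices to analyse the unnormalized complex $\llbracket K\rrbracket$.

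Next I would bring in the Turaev (atom) surface. Writing $s_A,s_B$ for the number of loops in the all-$A$ and all-$B$ states and $c$ for the number of classical crossings, the atom genus of a connected diagram satisfies $g(K)=\tfrac12(2-s_A-s_B+c)$, and the all-$A$ and all-$B$ loop systems are precisely the two checkerboard families of curves on the orientable surface supplied by the atom. Following a path of $B$-resmoothings out of the all-$A$ corner and using that each resmoothing either merges two loops or splits one, a direct count shows the chain groups occupy the diagonals $-s_B-c\le\delta\le s_A$, the extremes being realised by the all-$X$ labelling of the all-$B$ state and the all-$1$ labelling of the all-$A$ state. This chain-level band has width $s_A+s_B+c=2s_A+2s_B+2g(K)-2$, far larger than $2g(K)+2$; hence the theorem is genuinely a statement about homology, and all of the content lies in showing that cancellation trims the band.

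The core step is to prove that homology kills the outer diagonals. The maximal diagonal $\delta=s_A$ is occupied only by the all-$1$ labellings of those states obtained from the all-$A$ smoothing by resmoothings that split loops, and dually the minimal diagonal only by all-$X$ labellings near the all-$B$ corner. I would show that these outermost layers assemble into acyclic subcomplexes (equivalently, cancel them by Gaussian elimination), so that the homology retreats inward from both ends. The orientability of the atom forces the state curves to carry a coherent structure, which is what makes the extreme layers cancel over $\mathbb{Z}_2$; the extra spread that cannot be removed this way is the Euler-characteristic defect $c+2-s_A-s_B=2g(K)$ of the Turaev surface, which on top of the spread $2$ of the two unavoidable ``thin'' diagonals gives total spread $2g(K)+2$. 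When $g(K)=0$ this recovers the alternating case, where both extreme states are adequate and thinness follows from Lee's theorem, whose extension to even parity was noted by Viro.

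The hard part is this trimming: one must show the outer diagonals form a genuinely \emph{acyclic} complex, uniformly across all homological gradings, rather than merely that they are sparse. This is exactly where orientability of the atom is indispensable, in the same way that even parity is what allows Lee's argument to pass to virtual alternating knots; without it the differentials among the extreme enhanced states need not assemble into an acyclic complex over $\mathbb{Z}_2$. I expect the cleanest implementation to be a filtration of $\llbracket K\rrbracket$ by distance from the all-$A$ state whose associated graded is read off from the Turaev surface, so that the number of pages carrying a nonzero differential, and hence the final number of occupied diagonals, is controlled by $g(K)+2$.
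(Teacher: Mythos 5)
Your setup is fine---the diagonal grading $\delta=j-2i$, the identity $g(K)=\tfrac12(2-s_A-s_B+c)$, and the observation that the chain groups span the band $-s_B-c\le\delta\le s_A$, which is far wider than $2g(K)+2$---and you are right that this forces the entire content of the theorem to be a cancellation statement about homology. But that cancellation is precisely the step you do not supply. The paper itself only cites Manturov \cite{ManturovWidth} and describes his proof as a careful examination of the interplay between the Turaev genus and the number of crossings, i.e.\ essentially an induction on crossing number through the skein exact sequence; your proposal replaces this with ``I would show that the outermost layers assemble into acyclic subcomplexes'' and ``I expect the cleanest implementation to be a filtration,'' which is a statement of intent rather than an argument. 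As written, the proof stops exactly where the theorem begins.

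Moreover, the mechanism you sketch does not work as stated. The differential sends $(i,j)$ to $(i+1,j)$ and therefore shifts the diagonal by $\delta\mapsto\delta-2$; a single diagonal carries the zero differential and is never a subcomplex or quotient complex of $\llbracket K\rrbracket$, so ``the outer diagonals form an acyclic subcomplex'' does not typecheck. Gaussian elimination along a component of $d$ always removes one generator from diagonal $\delta$ and one from $\delta-2$, so peeling off the extreme diagonal necessarily disturbs the adjacent one; making this converge to a reduced complex confined to $g(K)+2$ diagonals is essentially the construction of a spanning-tree-type model and is a theorem in its own right, not a routine verification. Finally, the orientability of the atom---which you correctly flag as indispensable, and which the paper illustrates with virtual knot 2.1, where $g(K)=1/2$ but $Th(Kh(K))=4$---is never actually invoked at any specific point of your argument; a complete proof must identify exactly which cancellation fails when the atom is non-orientable.
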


The proof requires a careful examination of the interplay between the Turaev genus of the knot and the number of crossings in the knot diagram.  Note that the orientable condition is important. Virtual knot 2.1 has $Th(Kh(K)) = 4$ (as shown above), however $g(K) = 1/2$ since K can be placed on the projective plane in a checkerboard fashion.  \\

Asking the same questions for the fully-graded arrow categorification we immediately see:\\

\begin{thm} Given a virtual knot $K$ suppose the arrow polynomial $\langle K \rangle_{A}$ contains a monomial with non-zero arrow number $K_{i_1}^{e_1} \ldots K_{i_n}^{e_n}$ then \[W(AKh(K))) \geq 2(\sum_{i = 1, \ldots, n} e_i)\]
\end{thm}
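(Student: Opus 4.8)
\noindent\textit{Proof proposal.} The plan is to read the bound off the vector–graded Euler characteristic of $AKh$. Set $E=\sum_{i=1}^{n}e_i$. Since the monomial $K_{i_1}^{e_1}\cdots K_{i_n}^{e_n}$ occurs in $\langle K\rangle_A$ and the arrow numbers are recorded by the loop structure of a state, there is a state $s_0$ whose arrow–bearing loops consist of exactly $e_j$ loops of arrow number $i_j$, so that $s_0$ has $E$ distinguished loops in all, each of which may be labeled $1$ or $X$ independently. First I would compare the enhanced states $s_0^{X}$ and $s_0^{1}$ obtained by labeling all $E$ distinguished loops by $X$, respectively by $1$ (labeling the remaining, zero–arrow, loops identically in both). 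Passing from $s_0^{X}$ to $s_0^{1}$ fixes the smoothing, hence $n_B$, while increasing $\lambda$ by $2$ at each flip; thus $s_0^{X}$ and $s_0^{1}$ share a homological grading and their quantum gradings $j=n_B+\lambda$ differ by exactly $2E$.

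Next I would record the vector gradings. A distinguished loop $c$ contributes $+e_{o(c)}$ to $vg$ when labeled $X$ and $-e_{o(c)}$ when labeled $1$, so $s_0^{X}$ lies in the extreme vector grading $v_+:=\sum_{c}e_{o(c)}$ and $s_0^{1}$ in $v_-=-v_+$, and no enhancement of a state with this loop structure can exceed $v_+$ in any coordinate. Because the differential preserves the vector grading, $\mathcal H_A^{\bullet,\bullet,m,v_+}$ and $\mathcal H_A^{\bullet,\bullet,m,v_-}$ are computed by disjoint subcomplexes, so the two extreme labelings can never cancel one another. Writing
\[ P_{m,v}(q)=\sum_{i,j}(-1)^{i}q^{j}\dim\mathcal C_A^{i,j,m,v}=\sum_{i,j}(-1)^{i}q^{j}\dim\mathcal H_A^{i,j,m,v} \]
for the vector–graded Euler characteristic, the bijection flipping the $E$ distinguished labels carries generators in grading $v_+$ to generators in grading $v_-$ with the same $i$ and with $j$ raised by $2E$, whence
\[ P_{m,v_-}(q)=q^{2E}\,P_{m,v_+}(q). \]
Moreover the generators in bidegree $(m,v_+)$ are precisely the enhancements in which every arrow–bearing loop of such a state is labeled $X$; a direct count then gives $P_{m,v_+}(q)=q^{-E}\,C(q)$, where $C(q)$ is the coefficient of $K_{i_1}^{e_1}\cdots K_{i_n}^{e_n}$ in $\langle K\rangle_A$ with the arrow–number variables stripped off (here I assume the $i_j$ have distinct orders, so that fixing $m$ and $v_+$ isolates this loop structure). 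As $C\neq 0$ by hypothesis, $P_{m,v_+}\neq 0$, and hence $P_{m,v_-}\neq 0$.

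It then remains to promote the nonvanishing of $P_{m,v_\pm}$ to nonzero homology classes carried by $s_0^{X}$ and $s_0^{1}$ themselves. Once that is done the two classes sit at a common homological grading with quantum gradings differing by $2E$, hence on diagonals whose separation is $2E$, and therefore
\[ W(AKh(K))\ \ge\ 2E\ =\ 2\sum_{i=1}^{n}e_i, \]
as claimed.

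The hard part is exactly this survival statement. The naive device — transporting a class from $v_+$ to $v_-$ by flipping all $E$ labels — is unavailable, since that flip is only a bijection of chain groups and not a chain map (the merge and comultiplication do not commute with interchanging $1$ and $X$). I would instead argue at the chain level from the rigidity of the extreme gradings: every generator in grading $v_+$ has all distinguished loops labeled $X$, and the merge of two $X$–labeled circles vanishes, which sharply constrains the differential around the all–$X$ enhancement and should exhibit $s_0^{X}$ as a nonbounding cycle (dually for $s_0^{1}$). This is transparent when $s_0$ has no spare zero–arrow loops, as for virtual knot $3.1$, where the two–loop $K_1^{2}$ state yields the terms $\text{vg}(1,2)K[1]/q^{3}$ and $q\,\text{vg}(1,-2)K[1]$ of $AKh(VK_{3.1})$, four apart on the diagonals. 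A secondary point to dispatch is the case in which two distinct arrow numbers share an order: then fixing $m$ and $v_+$ no longer isolates a single monomial, the identification $P_{m,v_+}=q^{-E}C$ may fail through cancellation, and one must rely on the chain–level survival argument rather than the Euler characteristic to conclude.
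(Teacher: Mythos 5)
Your route is the same as the paper's: locate an unenhanced state $s_0$ realizing the monomial, compare its all--$X$ and all--$1$ enhancements, observe that these lie in opposite extreme vector gradings $v_{\pm}$ at the same homological degree with quantum degrees $2E$ apart, and deduce nonvanishing from the decategorification. However, the step you isolate as ``the hard part'' --- exhibiting $s_0^{X}$ and $s_0^{1}$ as nonbounding cycles --- is more than the theorem needs, and it is not how the paper closes the argument. Since the differential preserves $j$, $m$ and $v$, the complex is a direct sum of subcomplexes $\mathcal{C}_A^{\bullet,j,m,v}$, and on each summand the alternating sum of chain-group dimensions equals the alternating sum of homology dimensions. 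So once the chain-level Euler characteristic is nonzero in the gradings $(m,v_+)$ and $(m,v_-)$, the homology is nonzero there, in quantum degrees offset by exactly $2E$ via your flip bijection; no survival argument for the particular enhanced states is required. This is precisely what the paper means by ``an immediate consequence of categorification,'' so you should delete the chain-level rigidity discussion and replace it with this splitting observation.

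The genuine gap is elsewhere, in the identity $P_{m,v_+}(q)=q^{-E}C(q)$, and you have flagged only half of it. Fixing the multiple grading $m=\{K_{i_1},\dots,K_{i_n}\}$ and the vector grading $v_+$ does not isolate the enhancements of states with exactly $e_j$ loops of arrow number $i_j$: the multiple grading is a \emph{set}, so a state with $e_j+2k_j$ such loops, of which $e_j+k_j$ are labeled $X$ and $k_j$ are labeled $1$, lands in the same summand $(m,v_+)$ --- this happens even when all the orders $o(i_j)$ are distinct, which is the only failure mode you mention. Consequently $P_{m,v_+}$ mixes contributions from several monomials of $\langle K\rangle_A$ and could in principle vanish even though $C\neq 0$; to close this one must either choose the monomial extremal among those with arrow-number set $\{i_1,\dots,i_n\}$ so that only the intended states can reach the grading $v_+$, or appeal to the precise decategorification statement of Dye--Kauffman--Manturov. (To be fair, the paper's one-line proof elides the same point, as well as the distinction between the quantum width, which the Euler-characteristic argument controls, and the diagonal width, which also requires locating the homological degrees.)
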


\begin{proof} This is an immediate consequence of categorification. Since we have categorified up to multiple grading and vector grading the monomial with non-zero arrow number $K_{i_1}^{e_1} \ldots K_{i_n}^{e_n}$ corresponds to a term in $AKh(K)$. If we consider the vector gradings related to the corresponding unenhanced state in the cube complex we see the the largest and smallest correspond to the ``all 1'' labeling and the``all X'' labeling.  The corresponding width between these states is $2(\displaystyle\sum\limits_{i = 1, \ldots, n} e_i)$ giving the theorem.
\end{proof}

The above proof actually gives more if we consider all possible labelings.\\

\begin{thm} Given a virtual knot $K$ suppose the arrow polynomial $\langle K \rangle_{A}$ contains a monomial with non-zero arrow number $K_{i_1}^{e_1} \ldots K_{i_n}^{e_n}$, then \[ Th(AKh(K))) \geq \sum_{i = 1, \ldots, n} e_i \]
\end{thm}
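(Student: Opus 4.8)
The plan is to run the argument of the preceding width bound essentially verbatim, but to exploit \emph{all} labelings of the arrow-bearing circles rather than only the two extremal ones, exactly as the remark before the statement suggests. First I would reproduce the setup of that proof. Since the coefficient of $K_{i_1}^{e_1}\cdots K_{i_n}^{e_n}$ in $\langle K\rangle_A$ is nonzero, there is at least one unenhanced state $s_0$ in the cube complex whose loops realize this monomial; such an $s_0$ carries exactly $N:=\sum_{i} e_i$ circles with nonzero arrow number, sits in a single homological grading $i_0=n_B(s_0)$, and has multiple grading $\mathcal{M}=\{i_1,\dots,i_n\}$. Labeling each of these $N$ circles by $1$ or $X$, while fixing the labels on the remaining zero-arrow circles, produces enhanced states that all live in homological grading $i_0$ and all have multiple grading $\mathcal{M}$.

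Next I would track the quantum grading across these labelings. If $a$ of the $N$ arrow circles are labeled $1$, their contribution to $\lambda(s)$ is $a-(N-a)=2a-N$, so $j(s)=i_0+\mathrm{const}+(2a-N)$. As $a$ runs through $0,1,\dots,N$ this produces $N+1$ distinct quantum gradings differing in steps of $2$; because the homological grading is pinned at $i_0$, these enhanced states occupy $N+1$ distinct diagonals, independently of the precise sign convention for the line $t-2q=\mathrm{const}$ (any nonconstant linear form in $(i,j)$ separates values that differ only in $j$). The two extremal labelings, all $X$ and all $1$, are exactly the states used in the previous theorem, so the intermediate labelings are the additional data the earlier argument discarded.

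The real content, and the step I expect to be the main obstacle, is passing from nonzero chains to nonzero homology on at least $N$ of these $N+1$ candidate diagonals. Here I would use that the local differential $d_{s,s'}$ preserves both the multiple grading and the vector grading, so $\mathcal{C}_A$ splits as a direct sum over pairs $(\mathcal{M},v)$ and the homology can be computed one vector-grading sector at a time. Grouping the $N$ arrow circles by order, a labeling with $a$ ones fixes $\sum_k v_k=N-2a$ and hence fixes the sector, so the $N+1$ diagonals correspond to $N+1$ genuinely different sectors; the two extremal sectors are nonzero by the preceding theorem, and the intermediate ones would be handled by showing the relevant enhanced states are not boundaries, which should reduce to the same local case analysis that establishes $d^2=0$. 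The delicate feature is that we work over $\mathbb{Z}_2$, where graded Euler characteristics can collapse (for instance $(q+q^{-1})^{2^k}=q^{2^k}+q^{-2^k}$ modulo $2$), so Euler-characteristic bookkeeping cannot by itself certify the interior diagonals and the survival must be read directly from the $\mathbb{Z}_2$-dimensions of the homology sectors. Once each of these sectors is seen to be nontrivial, the quantum-grading computation above guarantees the diagonals are distinct and $Th(AKh(K))\geq N$ follows; virtual knot $3.1$, whose $\mathcal{M}=\{1\}$ part already occupies three distinct diagonals (its support is at $(i,j)=(0,-3),(0,-1),(0,1)$), is a useful sanity check that the interior sectors really do survive.
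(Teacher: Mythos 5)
Your proposal follows essentially the same route as the paper: the paper's entire proof of this theorem is the one-line remark that the preceding width argument ``gives more if we consider all possible labelings,'' and you have reconstructed exactly that — take the unenhanced state realizing the monomial, enumerate all $1/X$ labelings of its $\sum e_i$ arrow-bearing circles, and observe that these occupy distinct diagonals within a single homological degree because the differential preserves the multiple and vector gradings. Your additional caution about survival in homology (and the $\mathbb{Z}_2$ Euler-characteristic collapse) is a point the paper glosses over by simply asserting that the monomial ``corresponds to a term in $AKh(K)$,'' so you are if anything more careful than the source, not divergent from it.
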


\subsection{Computations and Examples}

Inspired by Dror Bar-Natan's program for Khovanov homology \cite{DrorCat} we have created a collection of Mathematica programs which calculate the categorifications of the arrow polynomial mentioned above. More information on these programs can be found in the Appendix. The program are also available from the first authors website.  The following examples have been computed using the list of knots available at Jeremy Green's Knot Tables \cite{GreenTables} as well as those in LinKnot \cite{LinKnot}.\\

\subsubsection{Identical Khovanov Invariant but Distinguished by a Categorification of the Arrow Polynomial}

\begin{figure}[h!]
\centering
    \includegraphics[width =.37\textwidth]{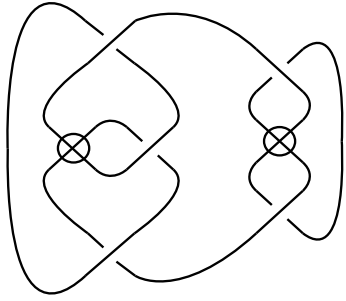}
\caption{Virtual Knot 5.129}
\label{fig:5.129}
\end{figure}

\begin{figure}[h!]
\centering
    \includegraphics[width =.37\textwidth]{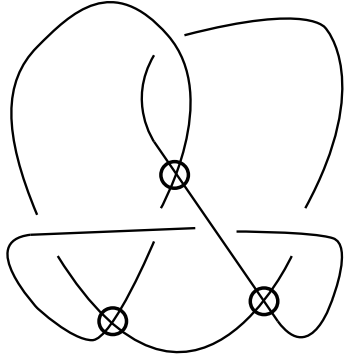}
\caption{Virtual Knot 5.267}
\label{fig:5.267}
\end{figure}

\begin{exa} Virtual knots 5.129 in Figure \ref{fig:5.129} and 5.267 in Figure \ref{fig:5.267} both have Khovanov invariant
\[Kh(5.129) = Kh(5.267) = \frac{1}{q^5 t^2}+\frac{1}{q^3 t^2}+\frac{1}{q^3 t}+q^2 t+\frac{1}{q^2}+\frac{1}{q t}+q+\frac{1}{q}+t+1 \]
and arrow polynomial
\[AP(5.129) = AP(5.267) = -A^{10}+A^6-A^4 \text{K2}-2 A^2 \text{K1}^2-A^2 \text{K1}+\frac{\text{K1}}{A^2}+2 A^2-\text{K2} \]
However,
\begin{align*}
AKh(5.129) = {} & \frac{\text{vg}(2,1) K[2]}{q^3 t}+\frac{2 \text{vg}(1,2) K[1]}{q^3}+q^2 t \text{vg}(1,-1) K[1]+\frac{\text{vg}(1,1) K[1]}{q^2}\\
& +q t \text{vg}(2,-1) K[2]+\frac{\text{vg}(2,-1) K[2]}{q t}+\frac{t \text{vg}(2,1) K[2]}{q}\\
& +2 q \text{vg}(1,-2) K[1] +t \text{vg}(1,1) K[1]+\text{vg}(1,-1) K[1]+\frac{4 K[1]}{q}\\
& +\frac{1}{q^5 t^2}+\frac{1}{q^3 t^2}+\frac{2}{q^3 t} +\frac{2}{q t}+q t+\frac{t}{q}+q+\frac{1}{q}
\end{align*}
and
\begin{align*}
AKh(5.267) = {} & \frac{\text{vg}(2,1) K[2]}{q^3 t}+\frac{2 \text{vg}(1,2) K[1]}{q^3}+q^2 t \text{vg}(1,-1) K[1]+\frac{\text{vg}(1,1) K[1]}{q^2}\\
& +q t \text{vg}(2,-1) K[2]+\frac{\text{vg}(2,-1) K[2]}{q t}+\frac{t \text{vg}(2,1) K[2]}{q}\\
& +2 q \text{vg}(1,-2) K[1] +t \text{vg}(1,1) K[1]+\text{vg}(1,-1) K[1]+\frac{4 K[1]}{q}\\
& +\frac{1}{q^5 t^2}+\frac{1}{q^3 t^2}+\frac{2}{q^3 t}+\frac{2}{q t}
\end{align*}
\end{exa}

\subsubsection{Rational Virtual Knots}

Lee \cite{Endomorphism} showed that the Khovanov homology of an alternating knot is completely determined by its Jones polynomial. Recall that every classical rational knot is isotopic to an alternating knot (see Theorem 3.5 of  \cite{RationalKnots}). Hence the Khovanov homology of every classical rational knot is completely determined by its Jones polynomial.\\

This result of Lee is not the case for virtual knots and the categorifications of the arrow polynomial. The following examples were found with the help of Slavik Jablan and the program LinKnot \cite{LinKnot}.\\

\begin{figure}[h!]
\centering
    \includegraphics[height=4cm]{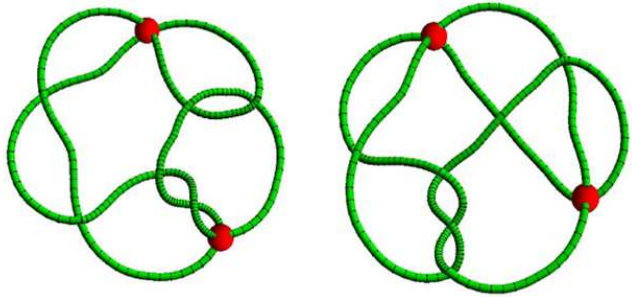}
\caption{Virtual Knots with Equivalent Polynomials but Distinguished by a Categorification}
\label{fig:Slavik}
\end{figure}

Both of the knots in Figure \ref{fig:Slavik} have identical normalized bracket polynomial
\[ \frac{1}{A^{26}}+\frac{1}{A^{24}}-\frac{1}{A^{20}}-\frac{1}{A^{18}}+\frac{1}{A^{12}}-\frac{1}{A^{10}}-\frac{1}{A^8}-\frac{1}{A^6}\]

and normalized arrow polynomial
\[A^{10}+A^8 \text{K1}-\frac{1}{A^6}-2 A^4
   \text{K1}-\frac{\text{K1}}{A^4}+\frac{1}{A^2}+2 \text{K1}\]

The knot on the left hand side has Khovanov invariant

\begin{eqnarray*}
q^{13} t^5+q^{12} t^4+q^{11} t^5+q^{11} t^4+q^{10} t^4+2 q^{10} t^3\\
+q^9 t^4+2 q^8 t^3+2 q^8 t^2+2 q^6 t^2+q^6 t+q^5+q^4 t+q^3
\end{eqnarray*}

and fully-graded arrow categorification invariant
\begin{eqnarray*}
q^{12} t^4 \text{vg}(1,-1) K[1]+q^{10} t^4 \text{vg}(1,1) K[1]\\
+2 q^{10} t^3 \text{vg}(1,-1) K[1]+2 q^8 t^3 \text{vg}(1,1) K[1]\\
+2 q^8 t^2 \text{vg}(1,-1) K[1]+2 q^6 t^2 \text{vg}(1,1) K[1]\\
+q^6 t \text{vg}(1,-1) K[1]+q^4 t \text{vg}(1,1) K[1]+q^{13} t^5\\
+q^{11} t^5+q^{11} t^4+q^9 t^4+q^5+q^3
\end{eqnarray*}

while the knot on the right hand side has Khovanov invariant

\begin{eqnarray*}
q^{13} t^5+q^{12} t^4+q^{11} t^5+q^{11} t^4+q^{10} t^4+2 q^{10} t^3\\
+q^9 t^4+q^9 t^3+q^9 t^2+2 q^8 t^3+2 q^8 t^2+q^7 t^3+2 q^7 t^2+q^7 t\\
+2 q^6 t^2+q^6 t+q^5 t^2+q^5 t+q^5+q^4 t+q^3
\end{eqnarray*}

and fully-graded arrow categorification invariant
\begin{eqnarray*}
q^{12} t^4 \text{vg}(1,-1) K[1]+q^{10} t^4 \text{vg}(1,1) K[1]\\
+2 q^{10} t^3 \text{vg}(1,-1) K[1]+2 q^8 t^3 \text{vg}(1,1) K[1]\\
+2 q^8 t^2 \text{vg}(1,-1) K[1]+2 q^6 t^2 \text{vg}(1,1) K[1]\\
+q^6 t \text{vg}(1,-1) K[1]+q^4 t \text{vg}(1,1) K[1]+q^{13} t^5\\
+q^{11} t^5+q^{11} t^4+q^9 t^4+q^9 t^3+q^9 t^2+q^7 t^3+2 q^7 t^2\\
+q^7 t+q^5t^2+q^5 t+q^5+q^3
\end{eqnarray*}

\section{Parity and Virtual Knots}
\subsection{Parity and the Reidemeister Moves} \label{sec:ParityRMs}

Given a diagram $D$ for a knot $K$ label each crossing uniquely 1 through $n$, where $n$ is the total number of crossings in $D$.  Let $P$ an arbitrary base-point on the knot. Starting at $P$ and following the orientation of the knot we can construct a sequence of length $2n$ with terms corresponding to each crossing we encounter. Each term is a 3-tuple of the form ($O$ or $U$, Crossing Number, $\pm$) where $O$ or $U$ corresponds to an over or under-crossing respectively and $\pm$ corresponds to the sign of the crossing.  The resulting code is referred to as the (signed, oriented) \emph{Gauss code} for the diagram $D$ of the knot $K$. \\

The Gauss code can be represented diagrammatically as follows.  Given a circle (often refereed to as the \emph{core circle}) place upon it in a counterclockwise fashion $2n$ points where each point is labeled by a crossing name (an integer between 1 and n) in the cyclic order corresponding to the Gauss code.  Between the two occurrences of a crossing on the core circle, place a signed, oriented chord where the sign corresponds to the crossing sign and the orientation goes from the over crossing to the under crossing. We call this the \emph{chord diagram} for $D$ \cite{GPV}, \cite{VKT}. For example, the knot $3.1$ in Figure \ref{fig:3.1} has Gauss Code \[``O1-,O2-,U1-,O3+,U2-,U3+''\] and chord diagram as in Figure \ref{fig:3_1CD}.

\begin{figure}[h!]
\centering
    \includegraphics[height=3.5cm]{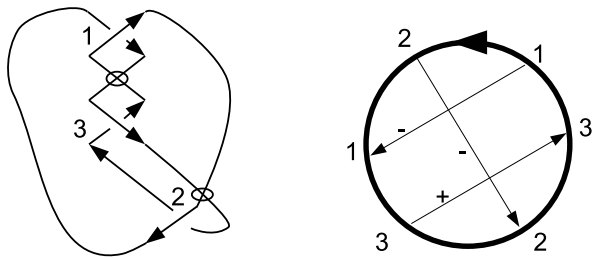}
\caption{Chord Diagram for Virtual Knot 3.1}
\label{fig:3_1CD}
\end{figure}

\begin{defn}
Given a diagram D for a knot K we can label each crossing as even or odd in the following manner. For each crossing $v$ locate the 2 occurrences of $v$ in the Gauss code for D. If the number of crossing labels between the two occurrences of $v$ is even then label the crossing even.  Else it is labeled odd.\\
\end{defn}

\begin{rem} This parity is well-defined for a 1-component links (i.e. knots) as the number of crossing labels in the Gauss code is $2n$ where $n$ is the number of crossings.
\end{rem}

It is important to notice how parity behaves under the classical Reidemeister moves. Note that virtual Reidemeister moves do not change the Gauss code or chord diagram and thus do not affect parity.

\begin{itemize}
	\item \textbf{Reidemeister I}\\
	A first Reidemeister move is always even, as is shown in Figure \ref{fig:CDR1}
	
\begin{figure}[H]
\centering
    \includegraphics[height=1.4cm]{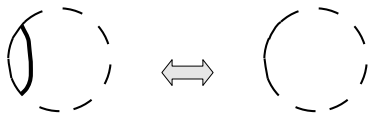}
\caption{Reidemeister I equivalence for flat chord diagram}
\label{fig:CDR1}
\end{figure}
	
	\item \textbf{Reidemeister II}\\
	The two crossings involved in a second Reidemeister move are either both even or both odd.  To see this, note that in Figure \ref{fig:CDR2}  if the number of crossings before the second Reidemeister move is $n+2$ and $a$ and $b$ denote the number of markings on the core circle as labeled in the figure then $a + b = 2n$ is even. Hence either $a$ and $b$ are both even or both odd.
	
	\begin{figure}[H]
\centering
    \includegraphics[height=3cm]{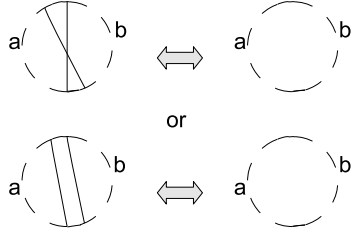}
\caption{Reidemeister II equivalence for flat chord diagram}
\label{fig:CDR2}
\end{figure}

\item \textbf{Reidemeister III}\\
	In a third Reidemeister move either all crossings are even or two are even and one is odd. To see this note that in Figure \ref{fig:CDR3}  if the number of crossings not involved in the third Reidemeister move is $n$ and $a, b$ and $c$ denote the number of markings on the core circle as labeled in the figure then $a + b + c= 2n$ is even. Hence either $a, b$ and $c$ are all even or two are odd and one is even.

\begin{figure}[H]
\centering
    \includegraphics[height=3.2cm]{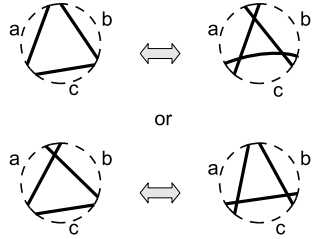}
\caption{Reidemeister III equivalence for flat chord diagram}
\label{fig:CDR3}
\end{figure}
\end{itemize}

\subsection{Manturov's Graphical Parity Bracket Polynomial}

Manturov \cite{ParityBracket} introduced the following graphical modification for the bracket polynomial.

\begin{defn} The \emph{parity bracket polynomial of a virtual knot K} is defined by the relations in Figure \ref{fig:ParityBracketReduce}.
\end{defn}

\begin{figure}[h!]
\centering
    \includegraphics[width=.9\textwidth]{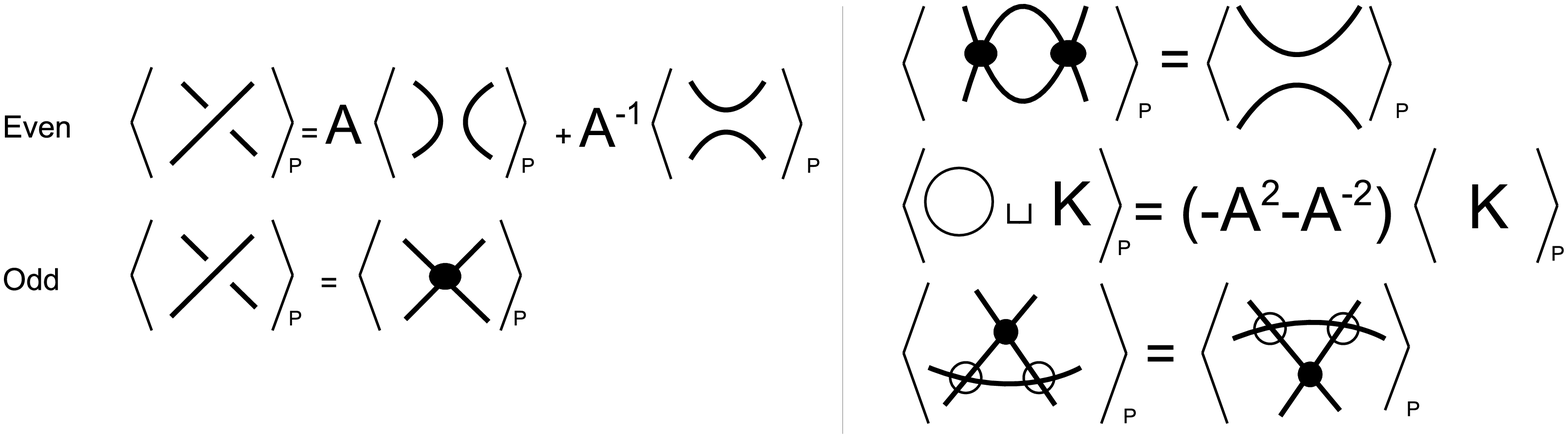}
\caption{Parity Bracket Polynomial Skein Relations}
\label{fig:ParityBracketReduce}
\end{figure}

\begin{defn}
Given a virtual knot K with diagram D, the \emph{normalized parity bracket polynomial of K} is given by
\[PF_{A}(K) = PF_{A}(D) = (-A)^{-3\omega(D)}  \langle D \rangle_{P}\]
where $\langle D \rangle_{P}$ is the parity bracket polynomial of D and
\[\omega(D) = \textrm{writhe}(D) = (\#\textrm{ positive crossings in D}) - (\#\textrm{ negative crossings in D}).\]
\end{defn}

\begin{thm} The parity bracket polynomial is an invariant of virtual knots.\\
\end{thm}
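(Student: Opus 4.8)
The plan is to verify invariance under each Reidemeister move separately, organizing the classical-move cases according to the parity bookkeeping already established in Section~\ref{sec:ParityRMs}. The virtual Reidemeister moves require no work: as noted above they leave the Gauss code and chord diagram unchanged, hence preserve the parity of every crossing, and they neither create nor destroy classical crossings, so the graphical state sum $\langle D \rangle_P$ is literally unchanged. It therefore suffices to treat the three classical moves, after which the prefactor $(-A)^{-3\omega(D)}$ will absorb the framing (writhe) change coming from Reidemeister~I.

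For Reidemeister~I, the computation of Figure~\ref{fig:CDR1} shows the curl crossing is always even, so the parity bracket resolves it by the ordinary $A$/$A^{-1}$ skein rule of Figure~\ref{fig:ParityBracketReduce}. One smoothing reconnects the strand while the other produces an extra disjoint loop carrying the value $d = -A^2 - A^{-2}$; the two contributions combine to the familiar factor $-A^{\pm 3}$ times the original state, which is exactly cancelled by the corresponding change in $(-A)^{-3\omega(D)}$. Hence $PF_A$ is unchanged.

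For Reidemeister~II and~III I would split into cases using Figures~\ref{fig:CDR2} and~\ref{fig:CDR3}. In an R2 move the two crossings are both even or both odd. If both are even, each is expanded by the usual skein relation and the four resulting states cancel in pairs (with one loop contributing $d$) exactly as in the classical bracket. If both are odd, neither may be smoothed and the two crossings persist as graphical nodes joined along a bigon; invariance reduces to checking that the graphical reduction relation of Figure~\ref{fig:ParityBracketReduce} sends this two-node bigon to the state with the strands pulled apart. Similarly, in an R3 move the crossings are either all even, in which case the classical R3 identity applies after expanding every crossing, or exactly one is odd. In the latter case the odd crossing survives as a node while the two even crossings are expanded, and one must match the two sides of the move using the extended graphical R3 relation displayed in Figure~\ref{fig:ParityBracketReduce}.

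I expect the main obstacle to be precisely the odd cases of R2 and R3, since the even cases merely reproduce the classical invariance proof. The real content is that odd crossings can never be smoothed away---so the states of the parity bracket are genuinely $4$-valent graphs rather than loop systems---and one must confirm that the graphical skein relations are self-consistent and evaluate irreducible graphs to a well-defined element. The delicate step is the mixed-parity R3 move, where a surviving node interacts with two expanded even crossings; verifying that the extended graphical relation makes both sides agree is where the parity constraints from Section~\ref{sec:ParityRMs} (guaranteeing that R3 never produces three odd crossings, and R2 never produces one even and one odd) are indispensable.
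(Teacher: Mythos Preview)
Your overall strategy matches the paper's: check each classical Reidemeister move case-by-case according to parity, with the even-only cases reducing to the ordinary bracket argument and the odd cases handled by the graphical reduction relations. Reidemeister~I and~II are treated exactly as in the paper.

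The Reidemeister~III case, however, contains a real error. You assert that in the mixed case \emph{exactly one} crossing is odd and two are even, and you then propose to expand the two even crossings and invoke an ``extended graphical R3 relation'' for the surviving node. But the parity analysis in Section~\ref{sec:ParityRMs} actually gives the opposite count: if $a+b+c$ is even and the three crossing parities are those of $b+c$, $a+c$, $a+b$, then in the mixed case exactly \emph{two} crossings are odd and only \emph{one} is even. (The paper's opening sentence for R3 contains a typo on this point, but the proof there --- and the parenthetical ``in the mixed case there is only 1 even crossing to choose'' --- makes the correct count clear.) Consequently there are not two even crossings to expand, and no graphical R3 relation is part of the skein system in Figure~\ref{fig:ParityBracketReduce}; only a graphical R2 and the detour move are available.

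The paper's argument instead applies the ``usual trick'': expand the \emph{single} even crossing, so that each of the two resulting terms carries the two odd (graphical) nodes, and observe that the two sides of the move then differ by a graphical Reidemeister~II, which has already been established. Your proposed route for mixed R3 would not go through as written; once you correct the parity count, the reduction to (graphical) R2 is forced and the argument becomes the paper's.
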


\begin{proof} We give an outline of the proof. The majority of this proof follows from Kauffman's proof of invariance for the bracket polynomial \cite{NewInvariants}.  You can find a similar proof by Manturov in \cite{ParityBracket}.
\begin{enumerate}
\item Reidemeister I follows from the writhe normalization.
\item Reidemeister II follows for even crossing as in the classical case and for odd crossing by the reduction relations.
\item Reidemeister III follows by applying the usual trick at a single even crossing. (Note in the mixed case there is only 1 even crossing to choose.) \qedhere
\end{enumerate}
\end{proof}

\begin{exa}
Figure \ref{fig:3.1GraphicalJones} displays the calculation of the parity bracket polynomial for virtual knot 3.1 in Figure \ref{fig:3.1}.\\
\end{exa}

\begin{figure}[h!]
\centering
    \includegraphics[width=.8\textwidth]{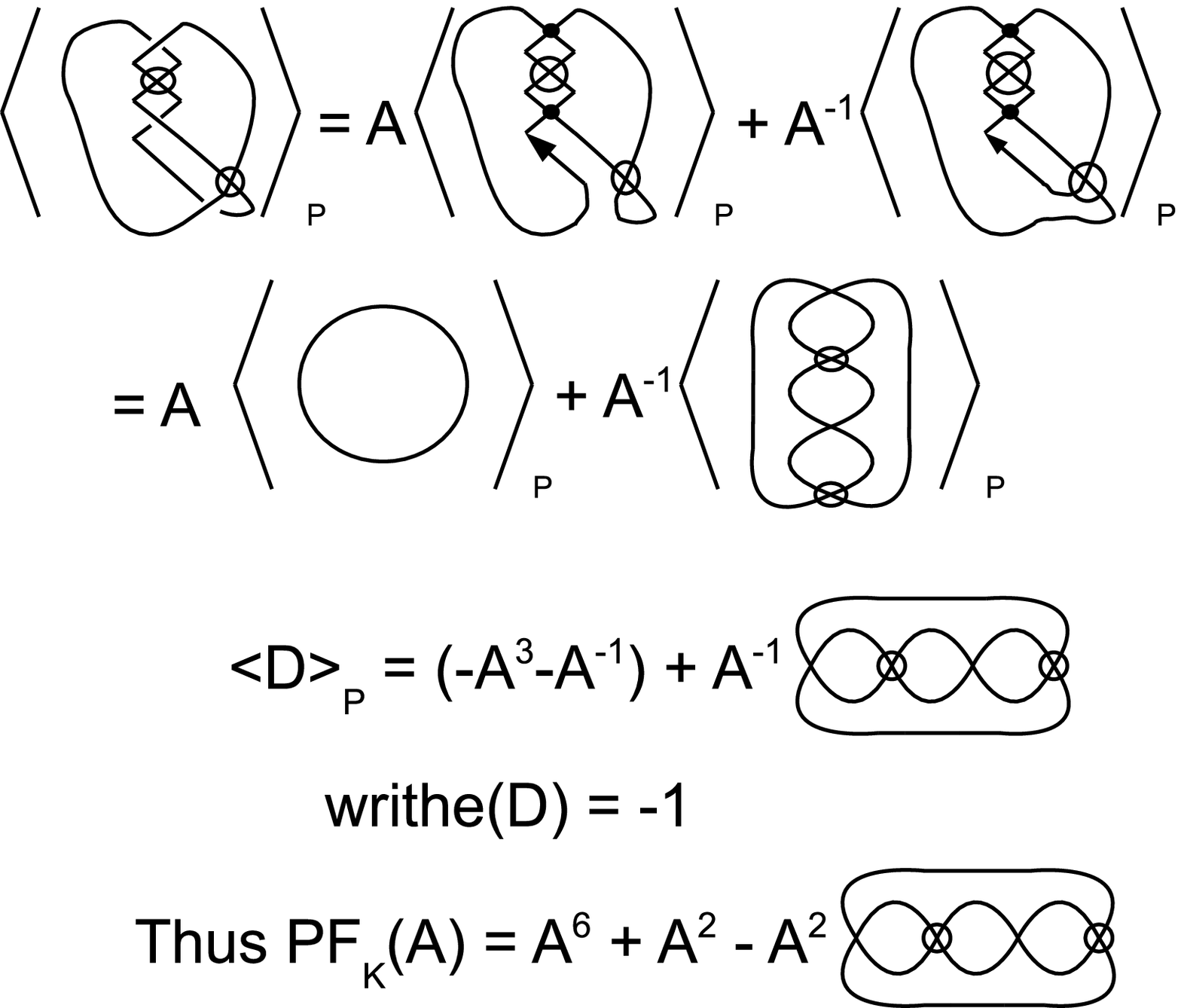}
\caption{Virtual Knot 3.1 Graphical Parity Bracket Polynomial}
\label{fig:3.1GraphicalJones}
\end{figure}

Notice that the parity bracket polynomial contains graphical coefficients. Since all of the remaining crossings are virtual or graphical (i.e. coming from an odd crossing) the only skein relations we may apply to the graphical coefficients are the graphical Reidemeister II move as well as the graphical detour move.\\

\subsubsection{The Parity Bracket Polynomial and Surface Genus}

\begin{defn} Given a graphical coefficient $D$ the minimal surface genus $s(D)$ is the minimal genus for an orientable surface $S_g$ such that there is an embedding $D \rightarrow S_g$. Given the product of graphical coefficients $D_{1}D_{2} \cdots D_{n}$  the surface genus $s(D_{1}D_{2} \cdots D_{n})$ is the minimal genus for an orientable surface $S_g$ such that there exist disjoint embedding $D_i \rightarrow S_g$ for $i \in \{1, \ldots,n \}$.\\
\end{defn}

\begin{rem} Note that in the case of the parity bracket polynomial the minimal surface genus for a graphical coefficient is the same as the minimal surface genus for the underlying flat virtual knot. Hence for a graphical coefficient $D$, $s(D) \geq s(K)$ where $K$ is any virtual knot arising from $D$ by resolving all crossing in any manner.
\end{rem}

\begin{thm} Given a knot K, the parity bracket polynomial gives a lower bound on the surface genus of K, $s(K)$. More precisely, if $PF_K(A)$ contains a monomial with graphical coefficients $D_{1}D_{2} \cdots D_{n}$ then
\[s(D_{1}D_{2} \cdots D_{n}) \leq s(K) \]
\end{thm}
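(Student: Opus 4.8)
The plan is to exploit the fact that the parity bracket polynomial is an invariant together with a genus-realizing diagram of $K$, observing that every graphical coefficient is literally a state of the diagram and therefore lives on the same surface. First I would invoke invariance: since $PF_A$ is an invariant of virtual knots, I may compute $\langle D \rangle_P$ from a diagram $D$ of $K$ drawn on a surface $S_g$ of minimal genus $g = s(K)$, using the correspondence between virtual knots and knots in thickened surfaces modulo stabilization recalled before the definition of $s(K)$. Every term in the state sum coming from this diagram is obtained by smoothing the even crossings and replacing each odd crossing by a graphical $4$-valent vertex, so each state is a disjoint collection of graphical loops drawn on $S_g$.

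Next I would track a single surviving monomial. Suppose $PF_A(K)$ contains a nonzero term whose graphical coefficient is $D_1 D_2 \cdots D_n$. Because the polynomial computed from the genus-realizing diagram $D$ equals $PF_A(K)$, and because a nonzero coefficient cannot arise from an empty collection of contributing states, there must be at least one state $s$ of $D$ whose reduced graphical coefficient is $D_1 D_2 \cdots D_n$ (up to the accompanying power of $A$ and the writhe normalization, neither of which affects the graphical part). The connected graphical pieces of $s$ are pairwise disjoint curves-with-vertices on $S_g$ by construction, and after applying the only admissible reductions, namely graphical Reidemeister II and the graphical detour move, each piece becomes a representative of some $D_j$. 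By the remark preceding the statement these reductions do not change the minimal surface genus, and crucially they can be carried out within $S_g$, so the reduced pieces remain disjointly embedded in $S_g$.

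Finally I would assemble the inequality. The disjoint embedding of $D_1, \ldots, D_n$ into $S_g$ produced above exhibits a genus-$g$ orientable surface realizing all of the $D_j$ simultaneously and disjointly, which is exactly the configuration over which $s(D_1 D_2 \cdots D_n)$ is minimized; hence $s(D_1 D_2 \cdots D_n) \leq g = s(K)$.

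The main obstacle is the cancellation bookkeeping in the middle step: I must be sure that a monomial surviving in the \emph{reduced} polynomial corresponds to a state that is \emph{actually present} on the minimal diagram, rather than being an artifact of combining several states. The clean way around this is the one-sided observation that a nonzero coefficient forces the existence of at least one contributing state, so cancellation, which can only remove terms, never manufactures a surviving monomial without a witnessing state; the surface carrying that state then does all the work. A secondary point to nail down is that $s(\cdot)$ is genuinely well defined on equivalence classes of graphical coefficients, i.e. invariant under graphical Reidemeister II and the detour move, which is precisely the content of the preceding remark and legitimizes comparing the surface genus of the abstract coefficient $D_1 \cdots D_n$ with that of a concrete state.
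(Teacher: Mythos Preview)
Your proposal is correct and follows essentially the same approach as the paper: compute the invariant from a minimal-genus surface representation, observe that each state (and hence each graphical coefficient) is realized on that surface, and conclude the inequality. Your treatment is in fact more careful than the paper's terse proof, as you explicitly address the cancellation issue (a surviving monomial must be witnessed by at least one state) and the fact that the graphical reductions can be performed within $S_g$, both of which the paper leaves implicit.
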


\begin{proof} Suppose K is given by an embedding $S^1 \hookrightarrow S_g \times I$ and $s'$ is the state of the parity bracket polynomial corresponding to the term with graphical coefficients $D_{1}D_{2} \cdots D_{n}$. Projecting $s$ down onto $S_g \times \{0\}$, we see that the minimal surface genus of $s'$ is at least $s(D_{1}D_{2} \cdots D_{n})$.  Since the polynomial is an invariant of the knot this holds for every projection.
\end{proof}

\begin{exa} Let K be virtual knot 4.72 in Figure \ref{fig:4.72}. It is a short exercise to show $\langle K \rangle_A = 1$ and that \[\langle K \rangle_P = -A^4-A^2+\text{D}_{2}[1]-2-\frac{1}{A^2}-\frac{1}{A^4} \]
where $\text{D}_2[1] = \DTwoOne$. Note that $s(\text{D}_2[1]) = 1$ \cite{DKMinimalSurface} and hence $s(K) \geq 1$.  Note that the diagram in Figure \ref{fig:4.72} has genus 2.  It is not currently known to us if the minimal surface genus is 1 or 2.  Similarly the diagram in Figure \ref{fig:4.72} has virtual crossing number 3. It is unclear if there is a diagram for this knot with a lower virtual crossing number.
\end{exa}

\begin{figure}[h!]
\centering
    \includegraphics[height=3cm]{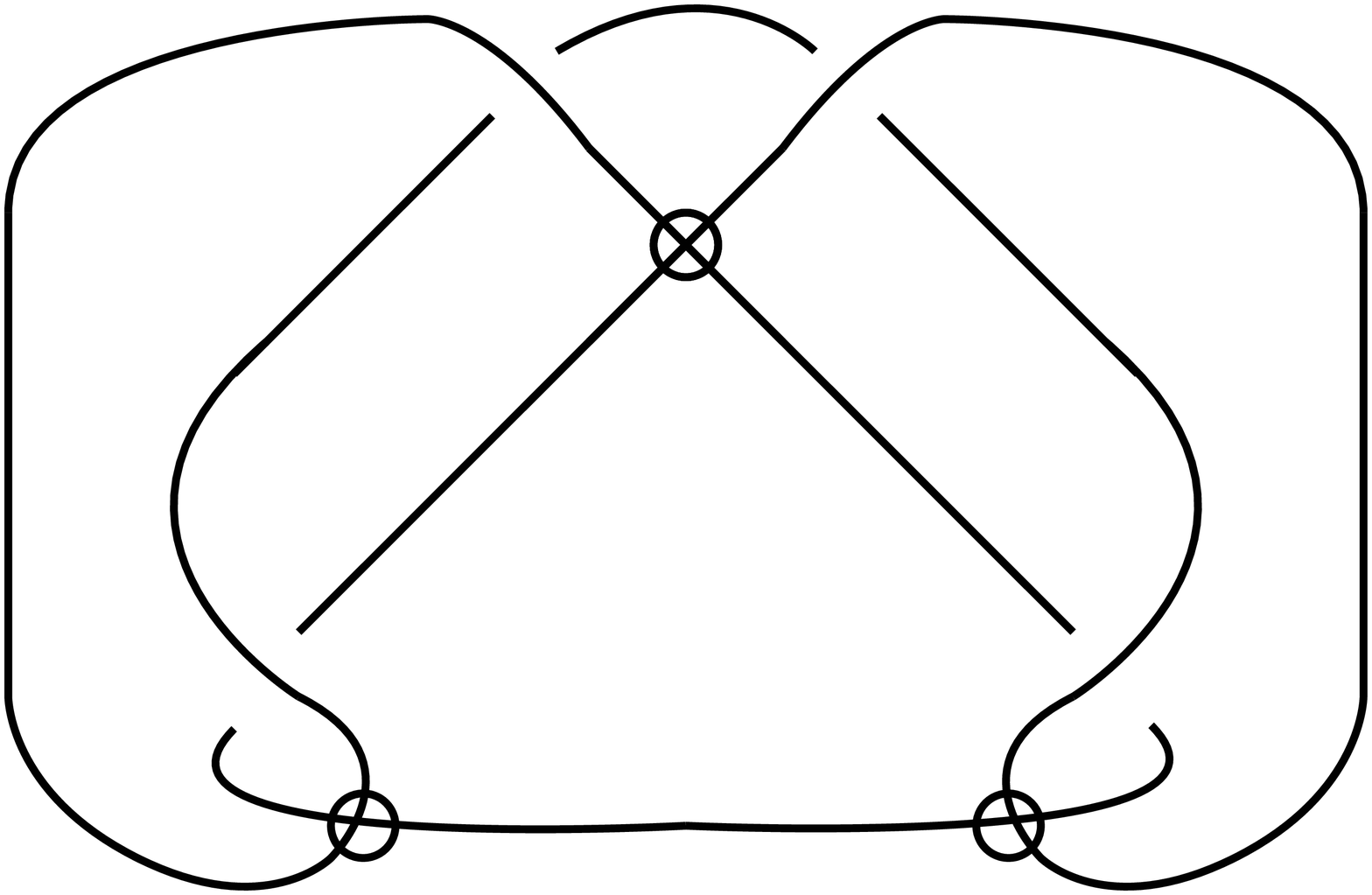}
\caption{Virtual Knot 4.72}
\label{fig:4.72}
\end{figure}

\subsection{A Parity Arrow Polynomial}

We use Manturov's idea of graphical coefficients \cite{ParityBracket} to extend the arrow polynomial via parity as follows.

\begin{defn} The \emph{(un-normalized) parity arrow polynomial of a virtual knot K} is defined by the relations in Figures \ref{fig:ParityArrowSmooth} and \ref{fig:ParityArrowReduce}. We expand as usual on the even crossing and make a graphical vertex for odd crossings.
\end{defn}

\begin{figure}[h!]
\centering
    \includegraphics[width=.6\textwidth]{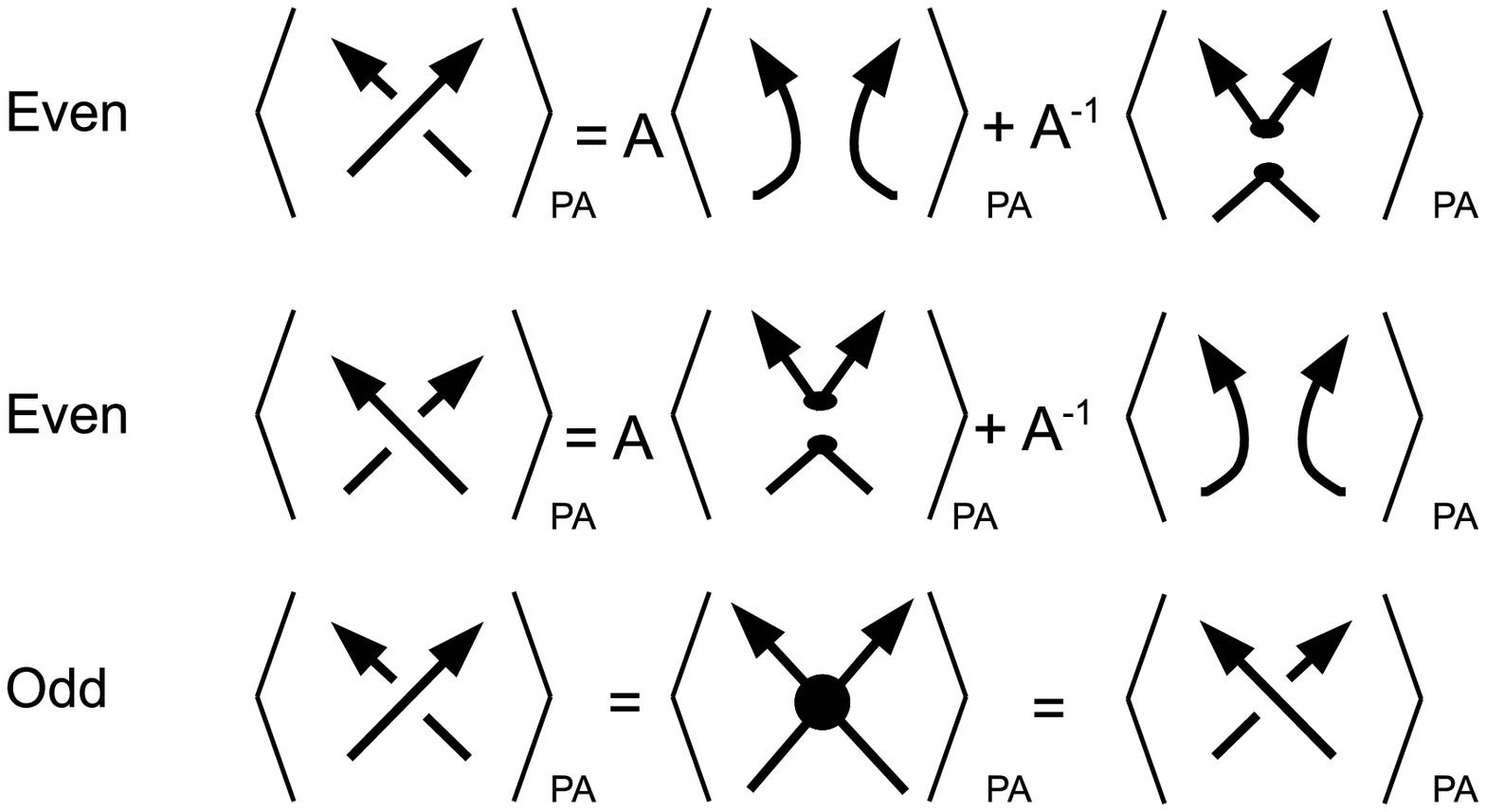}
\caption{Parity Arrow Polynomial Crossing Skein Relations}
\label{fig:ParityArrowSmooth}
\end{figure}

\begin{figure}[h!]
\centering
    \includegraphics[width=.9\textwidth]{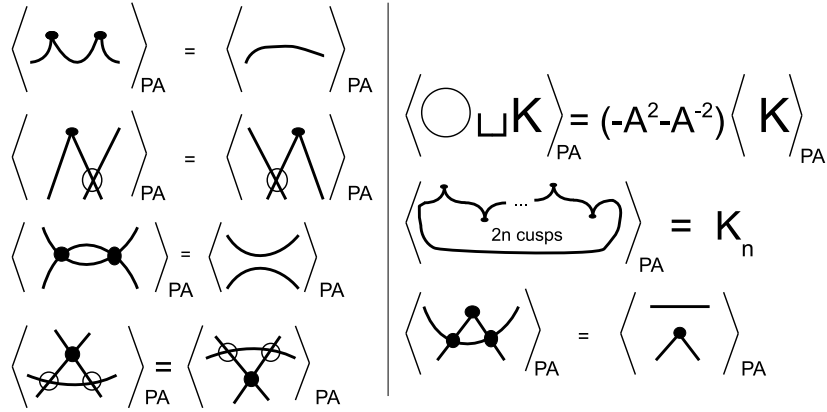}
\caption{Parity Arrow Polynomial Reduction Skein Relations}
\label{fig:ParityArrowReduce}
\end{figure}

\begin{defn} Given a virtual knot K with diagram D, The \emph{normalized parity arrow polynomial of K} is given by
\[PAP_{A}(K) = PAP_{A}(D) = (-A)^{-3\omega(D)}  \langle D \rangle_{PA}\]
where $\langle D \rangle_{PA}$ is the parity arrow polynomial of D and
\[\omega(D) = \textrm{writhe}(D) = (\#\textrm{ positive crossings in D}) - (\#\textrm{ negative crossings in D}).\]
\end{defn}

\begin{thm} The normalized parity arrow polynomial is an invariant of virtual knots.
\end{thm}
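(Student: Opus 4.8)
The plan is to follow the template already used above for the parity bracket polynomial, checking invariance one Reidemeister move at a time while grafting the arrow-polynomial bookkeeping (oriented versus disoriented smoothings, cusp cancellation, and the arrow-number labels $\{K_n\}$) onto Manturov's graphical treatment of odd crossings. Since the ordinary arrow polynomial is already an invariant \cite{ExtendedBracket} and the parity bracket proof above isolates exactly how odd crossings are absorbed by graphical reduction, most of the work is organizational: I would show that the two mechanisms are compatible move by move. First I would dispose of the virtual Reidemeister moves and the detour move, which leave the Gauss code, and hence the parity of every crossing, unchanged; these are handled by the reduction relations of Figure \ref{fig:ParityArrowReduce}, which are designed precisely to push cusps and graphical vertices past virtual crossings, so the virtual moves cost nothing just as for the ordinary arrow polynomial.

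Next I would treat Reidemeister I. By Section \ref{sec:ParityRMs} an R1 crossing is always even, so it is expanded by the two smoothings of Figure \ref{fig:ParityArrowSmooth}: the oriented smoothing produces a disjoint simple loop together with the strand, while the disoriented smoothing produces a pair of consecutive cusps that cancel under the reduction relations and return the bare strand. Together these reproduce exactly the $(-A)^{\pm 3}$ framing factor of the ordinary arrow polynomial, which the $(-A)^{-3\omega(D)}$ normalization in the definition of $PAP_A$ absorbs.

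For Reidemeister II I would split on parity, using the fact from Section \ref{sec:ParityRMs} that the two crossings are either both even or both odd. In the both-even case both crossings are expanded, and the computation is identical to the invariance of the ordinary arrow polynomial \cite{ExtendedBracket}: the two mixed smoothings cancel and the surviving terms reassemble the parallel strands after cusp cancellation. In the both-odd case both crossings become graphical vertices, and the pair is removed by the graphical Reidemeister II reduction of Figure \ref{fig:ParityArrowReduce}, mirroring Manturov's argument for the parity bracket \cite{ParityBracket}.

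Finally, Reidemeister III is the crux. The parity computation of Section \ref{sec:ParityRMs} guarantees that in every R3 configuration at least one crossing is even, and that in the mixed case exactly one is even while the other two are odd. I would expand that even crossing and reduce the move to an application of the R2 invariance just established, sliding the remaining (possibly graphical) crossings by the graphical R2 and detour moves so that the two sides of the R3 picture agree. The main obstacle I anticipate is exactly this step: one must verify that the cusps and arrow-number labels created when the even crossing is smoothed are transported consistently across the graphical vertices of the two odd crossings, so that the arrow-number data on the two sides of R3 literally coincide rather than merely matching after further reduction. Confirming that the $\pm$-additivity of arrow numbers and the graphical reductions commute with the single-even-crossing expansion is where the real work lies; the remaining bookkeeping is routine and parallels both \cite{NewInvariants} and the parity bracket proof above.
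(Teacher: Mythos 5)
Your proposal follows essentially the same route as the paper's proof: invariance is checked move by move, with Reidemeister I absorbed by the writhe normalization, Reidemeister II split by parity into the ordinary arrow-polynomial argument (both even) and the graphical reduction relations (both odd), and Reidemeister III handled by expanding the single even crossing and invoking the cusped `Reidemeister II'-like relation to transport the cusp and arrow-number data past the graphical vertices. The ``main obstacle'' you identify is exactly the step the paper addresses diagrammatically in Figure \ref{fig:R3GraphicalArrow}, so no gap remains.
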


\begin{proof}As in the case of the parity bracket polynomial much of the proof is the same as in the non-parity version.
\begin{enumerate}
\item Reidemeister I follows from the writhe normalization.\\
\item Reidemeister II follows for even crossing by the equivalent proof for the arrow polynomial and for odd crossing by the reduction relations.\\
\item Reidemeister III follows by applying the usual trick at a single even crossing in conjunction with the cusped `Reidemeister II'-like relation. (Note in the mixed case there is only 1 even crossing to choose.)  See Figure \ref{fig:R3GraphicalArrow} for one of diagramatic proofs.  The others follow similarly. \qedhere \\
\end{enumerate} \end{proof}

\begin{figure}[h!]
\centering
\includegraphics[width=.7\textwidth]{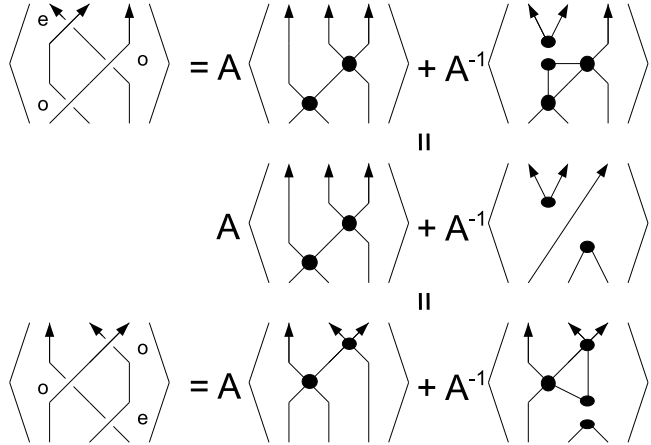}
\caption{Invariance for one of the mixed Reidemeister III move.}
\label{fig:R3GraphicalArrow}
\end{figure}

\begin{exa} The normalized parity arrow polynomial for virtual knot 4.70 in Figure \ref{fig:4.70} is equal
\[ \text{D}_{2}[3] A^8+2 \text{K}_{1} A^6-A^6-A^2\]
where $\text{D}_{2}[3]$ is the graphical coefficient \DTwoThree. We should note that the parity bracket polynomial of virtual knot 4.70 also contains a graphical coefficient.
\end{exa}

\begin{figure}[h!]
\centering
\includegraphics[width=.3\textwidth]{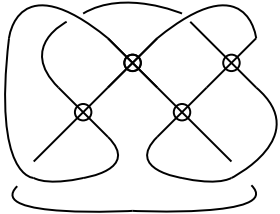}
\caption{Virtual Knot 4.70}
\label{fig:4.70}
\end{figure}


\begin{rem}
In \cite{ExtendedBracket} Kauffman introduced a polynomial called with extended bracket polynomial which is a generalization of the arrow polynomial. One key difference between these two polynomials is that the cusps in the extended bracket polynomial are maintained in associated pairs.  One can extend the parity arrow polynomial in a similar fashion by replacing the mixed `Reidemeister II'-like relation with a similar relation \extendedbracketRIII.
\end{rem}

\subsubsection{The Parity Arrow Polynomial and Surface Genus}

Similar to the parity bracket polynomial we have the following lower bound on the surface genus.\\

\begin{thm} Given a knot K, the parity bracket polynomial gives a lower bound on the surface genus of K, $s(K)$. More precisely, if $\langle D \rangle_{PA}$ contains a monomial with graphical coefficients $D_{1}D_{2} \cdots D_{n}$ then
\[s(D_{1}D_{2} \cdots D_{n}) \leq s(K) \]
\end{thm}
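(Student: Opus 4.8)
The plan is to follow, almost verbatim, the argument used for the parity bracket polynomial, with invariance of the parity arrow polynomial (the theorem just established) supplying the essential leverage. The one-line idea is that every diagram of $K$ computes the same invariant, so any graphical monomial that appears in $\langle D \rangle_{PA}$ must also appear when the state sum is run on a \emph{minimal genus} projection of $K$; the graphical coefficients in that monomial are then literally embedded and disjoint in the minimal genus surface, which forces $s(D_1 D_2 \cdots D_n) \leq s(K)$. Because the normalization factor $(-A)^{-3\omega(D)}$ is a unit, the collection of graphical coefficients occurring is itself a diagram-independent invariant, so nothing is lost by phrasing the statement in terms of the un-normalized $\langle D \rangle_{PA}$.

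First I would fix a minimal genus embedding $S^1 \hookrightarrow S_g \times I$ with $g = s(K)$ and project generically onto $S_g \times \{0\}$ to obtain a diagram $D'$ of $K$ drawn on $S_g$ itself. Working on the surface rather than in the plane, this diagram carries only the genuine crossings of $K$ and needs no virtual crossings. Next I would run the parity arrow state sum on $D'$: at each even crossing apply the oriented and disoriented smoothings of Figure \ref{fig:ParityArrowSmooth}, and at each odd crossing insert a graphical vertex. By invariance of the normalized parity arrow polynomial, the monomial whose graphical coefficients are $D_1 D_2 \cdots D_n$ occurs in this computation as well; let $s'$ be a state of $D'$ realizing it.

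The heart of the argument is that $s'$ is a flat $4$-valent graph decorated with arrow cusps, produced from $D'$ by purely local replacements, so it inherits an embedding into $S_g$. The graphical coefficients $D_1, \ldots, D_n$ are distinct connected subconfigurations of $s'$ carrying the odd-crossing data, and as connected components of the embedded state they are pairwise disjoint in $S_g$. This exhibits the required disjoint embeddings $D_i \hookrightarrow S_g$, and the definition of $s(D_1 D_2 \cdots D_n)$ immediately yields $s(D_1 D_2 \cdots D_n) \leq g = s(K)$.

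I expect the only real obstacle to be the bookkeeping that justifies that the cusps created by the disoriented smoothings, together with the cusp cancellation and virtual-crossing resolution of Figure \ref{fig:ParityArrowReduce}, do not affect the genus estimate. Here I would invoke the remark preceding the parity bracket genus theorem, which records that the minimal surface genus of a graphical coefficient agrees with that of its underlying flat virtual diagram; the arrow cusps lie along the curves of $s'$ and do not change its embedded isotopy type on $S_g$. The key point to verify is that the reduction moves are carried out entirely within the fixed surface $S_g$, so they never force an increase in genus, making the passage from the bracket setting to the arrow setting formal once this local check is done.
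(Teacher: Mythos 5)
Your proposal is correct and follows essentially the same route as the paper: the paper's proof of this theorem is simply "identical to the proof for the parity bracket polynomial," and that proof is exactly your argument — take the state $s'$ realizing the graphical monomial on a minimal genus embedding, project to $S_g \times \{0\}$, and read off disjoint embeddings of the $D_i$, with invariance guaranteeing the monomial appears for every projection. Your added care about the arrow cusps and reduction moves being local and surface-preserving is a reasonable elaboration of what the paper leaves implicit.
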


\begin{proof} This is identical to the proof for the parity bracket polynomial.
\end{proof}

\begin{exa} Virtual knot 5.5 in Figure \ref{fig:5.5} has  arrow polynomial
\[A^{10} \text{K1}^2-A^{10} \text{K1}-3 A^6 \text{K1}^2+3 A^6 \text{K1}+A^6-\frac{1}{A^6}-2 A^4 \text{K2}-3 A^2 \text{K1}+\frac{\text{K1}}{A^2}+2 A^2\]
and parity arrow polynomial
\[ -A^4 D_4[1] -A^6-2 A^2-\frac{1}{A^2}\]
Where $D_4[1] = \DFourOne$
Note that by \cite{DKVirtualCrossingNumber} the arrow polynomial gives a minimal surface genus, $s(K) \geq 1$.  However, in \cite{DKMinimalSurface} Dye and Kauffman show that a Kishino knot lying under $D_4[1]$ (in the sense of resolving graphical nodes into knot crossings) has surface genus 2.  Hence the parity arrow polynomial gives the lower bound $s(K) \geq 2$.
\end{exa}

\begin{figure}[h!]
\centering
\includegraphics[width=.5\textwidth]{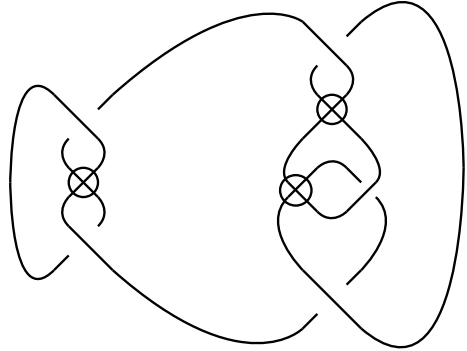}
\caption{Virtual Knot 5.5}
\label{fig:5.5}
\end{figure}

\subsection{Z-Equivalence and Graphical Parity Polynomials}

When computing the normalized bracket polynomial for virtual knots the relation of Z-Equivalence, depicted in Figure \ref{fig:ZEquiv}, goes undetected.  Passing to the parity bracket polynomial we have the option to add the corresponding graphical relation to the coefficients (i.e. including the relation in Figure \ref{fig:ZEquiv} when the classical crossings are replaced by graphical nodes.) We call the resulting polynomial the \emph{z-parity bracket polynomial}

\begin{figure}[h!]
\centering
    \includegraphics[height=1.2cm]{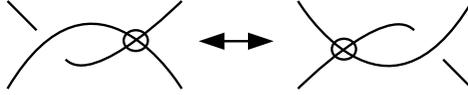}
\caption{Z-Equivalence and the Bracket Polynomial}
\label{fig:ZEquiv}
\end{figure}

 Going a step further, we can choose to completely ignore the graphical coefficients by sending the graphical nodes to virtual crossings. This leads to the following \emph{forgetful parity bracket polynomial}.

\begin{lem} The parity bracket polynomial is strictly stronger than the z-parity bracket polynomial which in turn is strictly stronger than the forgetful parity bracket polynomial.
\end{lem}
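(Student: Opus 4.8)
The plan is to establish a chain of implications (each invariant refines the next) together with explicit examples showing that each refinement is strict. The statement has two parts bundled together: first, that the parity bracket polynomial (PBP) determines the z-parity bracket polynomial (zPBP), which determines the forgetful parity bracket polynomial (fPBP); second, that neither implication can be reversed. I would treat these as two separate tasks.

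\textbf{The refinement direction.} First I would show that each polynomial in the chain is obtained from the previous one by imposing additional identifications on the graphical coefficients, so that equality of the stronger invariant forces equality of the weaker one. The zPBP is the PBP together with the extra graphical Z-equivalence relation of Figure~\ref{fig:ZEquiv} imposed on the graphical coefficients; thus there is a well-defined quotient map sending a PBP coefficient to its image modulo Z-equivalence, and if two knots have equal PBP they a fortiori have equal zPBP. Similarly, the fPBP is obtained by sending every graphical node to a virtual crossing, which is a further quotient (the forgetful map collapses all graphical data to the underlying bracket value). So the forward implications are formal consequences of the fact that each invariant factors through the previous one via a surjection on the coefficient space. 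This direction requires only that one check these maps are well-defined on the level of skein relations, i.e.\ that the imposed relations are respected by the graphical Reidemeister~II and detour moves already available to the graphical coefficients.

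\textbf{The strictness direction.} The substance of the lemma lies in producing two pairs of virtual knots: one pair $(K_1,K_1')$ with equal zPBP but distinct PBP (witnessing PBP $\succ$ zPBP strictly), and one pair $(K_2,K_2')$ with equal fPBP but distinct zPBP (witnessing zPBP $\succ$ fPBP strictly). For the first pair I would look for knots whose parity bracket coefficients are distinct graphical diagrams that happen to become equal only after imposing Z-equivalence — that is, two flat graphical diagrams related by a Z-move but not by graphical $R_{II}$ and detour moves alone. For the second pair I would want knots whose graphical coefficients are nontrivial (so that collapsing to virtual crossings loses information) yet distinct under Z-equivalence. The examples of Section~2, particularly the graphical coefficients $\text{D}_2[1]$, $\text{D}_2[3]$, $\text{D}_4[1]$ appearing in virtual knots such as 4.70, 4.72, and 5.5, are natural candidates, and the minimal-surface-genus results already quoted (via \cite{DKMinimalSurface}) give an invariant capable of distinguishing graphical coefficients that collapse under forgetting.

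\textbf{The main obstacle.} The hard part will be the strictness claims, specifically certifying that a given pair of graphical coefficients is \emph{not} equal under the weaker relation while \emph{being} distinct under the stronger one. Proving two graphical diagrams are genuinely distinct as invariants (rather than merely drawn differently) requires an auxiliary invariant of flat graphical diagrams that survives the relevant quotient — surface genus of the underlying flat virtual knot, as used in the surface-genus theorems above, is the natural tool, since Z-equivalence and the detour/graphical $R_{II}$ moves preserve genus while the forgetful collapse destroys it. I expect the cleanest route is to exhibit concrete small virtual knots from Green's tables whose computed parity bracket polynomials (available from the Mathematica programs described in the Appendix) realize these distinctions, and then to invoke genus to separate the graphical coefficients, rather than attempting an abstract classification of flat graphical diagrams modulo each relation.
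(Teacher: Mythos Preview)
Your refinement direction matches the paper's argument exactly: the paper's entire proof consists of the observation that zPBP is PBP modulo an extra relation (``immediate from the definition'') and that fPBP factors through zPBP because replacing graphical nodes by virtual crossings turns the Z-equivalence move of Figure~\ref{fig:ZEquiv} into a tautology (``swapping an odd crossing for a virtual crossing in the Z-Equivalence relation yields an identity''). That is all the paper does.

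Where you diverge is in taking the word \emph{strictly} seriously. The paper's proof does not supply any examples, does not invoke surface genus, and does not exhibit pairs of knots separating the three invariants --- it establishes only the chain of surjections on coefficient modules and leaves strictness unaddressed. Your plan to witness strictness via concrete knots from the tables and to certify inequivalence of graphical coefficients using the surface-genus invariant is therefore substantially more than what the paper provides, and is the correct thing to do if one actually wants to justify the lemma as stated. In short: your refinement argument coincides with the paper's proof, and your strictness argument fills a gap that the paper's proof leaves open.
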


\begin{proof}For the parity bracket polynomial and z-parity bracket polynomial the follows immediately from the definition.  For the forgetful parity bracket polynomial and z-parity bracket polynomial, notice that the action of swapping an odd crossing for a virtual crossing in the Z-Equivalence relation yields an identity.
\end{proof}

\subsection{Manturov's Parity Filtration}

In \cite{ParityBracket} Manturov introduced the following descending filtration on the category of virtual knots. Given a virtual knot K with chord diagram D.  Let $D_0 = D$ be the equivalence class of $D$ up to Reidemeister moves and define $D_{i+1}$ to be the equivalence class of $D_i$ after removing all odd chords (or equivalently, turning odd crossings into virtual crossings.)\\

This is well-defined since parity is invariant under the Reidemeister moves as shown above, most notably this filtration does not introduce the forbidden move from Figure \ref{fig:Forbidden} when applied to the mixed-parity Reidemeister III move.  Hence for 2 representatives $D_i, D_{i}' \in [D_i] :=\text{Equivalence Class of }D_i$ we have $[D_{i+1}] \equiv [D_{i+1}']$.\\

Diagrammatically this filtration is described by the map sending odd crossing to virtual crossing and hence the forgetful parity polynomials are precisely an application of the respective polynomial on the filtration.\\

\begin{thm}

\begin{enumerate}
  \item For any virtual knot this filtration is finite. (i.e. there exists $n$ such that $D_n = D_i$ for $n \leq i$.)
  \item For any classical knot this filtration is of the form $D_0 = D_1 = \ldots$, that is all levels of the filtration are identical.
\end{enumerate}
\end{thm}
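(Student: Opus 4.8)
The plan is to handle the two parts separately, in both cases exploiting the well-definedness of the ``delete all odd chords'' operation on equivalence classes that was just established, so that at each stage I am free to compute \(D_{i+1}\) from whatever representative of \(D_i\) is most convenient.

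For part (1) I would introduce the monovariant \(m_i\), defined as the minimal number of classical crossings among all diagrams representing the class \(D_i\). Fix a crossing-minimal representative \(\tilde{D}_i\) of \(D_i\) and build a representative of \(D_{i+1}\) by turning every odd crossing of \(\tilde{D}_i\) into a virtual crossing; this diagram has exactly as many classical crossings as \(\tilde{D}_i\) has \emph{even} crossings, so \(m_{i+1}\) is at most the number of even crossings of \(\tilde{D}_i\), hence at most \(m_i\), with strict inequality whenever \(\tilde{D}_i\) carries at least one odd crossing. Thus \((m_i)\) is a non-increasing sequence of non-negative integers and stabilizes: there is an \(n\) with \(m_i = m_n\) for all \(i \ge n\). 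For any such \(i\) the chosen minimal representative can have no odd crossing---otherwise the strict inequality would force \(m_{i+1} < m_i\)---so deleting its odd chords changes nothing and \(D_{i+1} = [\tilde{D}_i] = D_i\). Since the operation is deterministic on classes, once \(D_{i+1} = D_i\) the filtration is constant from then on, which is exactly the assertion.

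For part (2) the crucial input is that a classical knot possesses a purely classical (virtual-crossing-free) diagram, whose Gauss code is realizable by a generic closed curve in the plane. I would then use that every crossing of such a diagram is \emph{even}. Recalling that the parity of a crossing \(v\) equals, modulo \(2\), the number of chords interlaced with \(v\) (each non-interlaced chord contributes \(0\) or \(2\) to the count of labels lying between the two occurrences of \(v\), while each interlaced chord contributes \(1\)), it suffices to know that in a realizable Gauss code every chord interlaces an even number of others. This is Gauss's classical even-interlacement condition---equivalently the statement, standard in the parity literature, that a classical knot has no odd crossings (the same fact that makes the odd writhe vanish on classical diagrams). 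Consequently, computing \(D_1\) from this classical representative deletes nothing, so \(D_1 = D_0\), and by the deterministic stabilization already used in part (1) every subsequent level agrees, giving \(D_0 = D_1 = \cdots\).

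The formal skeleton of both parts is routine; the genuine content, and the step I expect to be the main obstacle, is the even-parity fact underlying part (2). If I did not wish simply to cite it, I would prove it directly via the Jordan curve theorem: closing up the two arcs of the curve cut off at \(v\) yields two closed planar curves whose transverse intersections away from \(v\) are precisely the crossings interlaced with \(v\), and any two generically immersed closed curves in the plane meet in an even number of points, so this interlacement count is even and \(v\) is even. I would also double-check the one subtle point in part (1)---that passing to equivalence classes does not spoil the monovariant argument---but this is covered exactly by the well-definedness of odd-chord removal noted just before the theorem.
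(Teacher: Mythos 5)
Your proposal is correct and follows the same two ideas as the paper's (very terse) proof: part (1) rests on the finiteness/decrease of the number of classical crossings, and part (2) on the fact that a classical diagram has only even crossings, which you justify via the standard Jordan-curve/even-interlacement argument. You have simply supplied the details — the monovariant on minimal crossing number of representatives and the careful use of well-definedness on equivalence classes — that the paper's two-line sketch leaves implicit.
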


\begin{proof}

\begin{enumerate}
  \item This follows from the finiteness of crossings.
  \item Every classical knot is equivalent to a knot with all even crossings. \qedhere
\end{enumerate}
\end{proof}

It is not difficult to construct a family of virtual knots for which given an $n$, $D_n = D_i$ for $n \leq i$.  Consider the family in Figure \ref{fig:filtrationfamily}.  Here the knot $F_1$ is the 2-crossing virtual knot (hence not equivalent to the unknot.) Each additional member of the family is produced from its predecessor by the addition of two odd positive crossing arcs and which become the only odd arcs in the new diagram. Since we are unable to cancel the two new odd crossings with one another we see each new virtual knot created in this way is unique from its predecessor. Moreover for $F_n$ it is easy to verify that for $D_n = D_i =\text{Unknot}$ for $n \leq i$.\\

\begin{figure}[h!]
\centering
    \includegraphics[width=.9\textwidth]{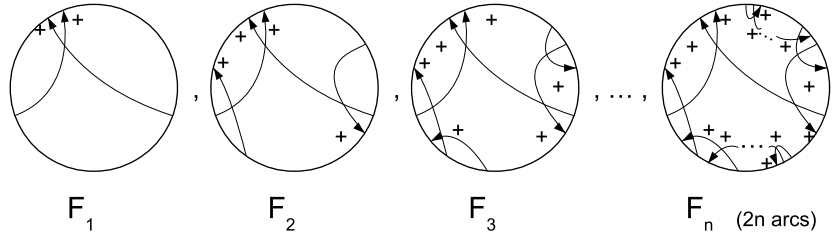}
\caption{A Parity Filtration Family}
\label{fig:filtrationfamily}
\end{figure}

\subsection{Extending the Parity Polynomials to Links}\label{sec:LinkParityPolys}
For this subsection we will take a wider view and consider the space of virtual links (2 or more components). One should note that our definition of even and odd parity does not naturally extend. For example, the links in Figure \ref{fig:paritylinks1} illustrate some of the difficulty in the natural extension.\\

\begin{figure}[h!]
\centering
    \includegraphics[width=0.5\textwidth]{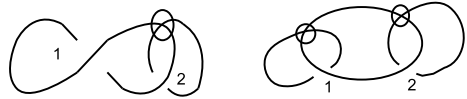}
\caption{Examples of Difficulty in Extending the Definition to Links}
\label{fig:paritylinks1}
\end{figure}

Omitting signs, the left link in \ref{fig:paritylinks1} has Gauss code ``$O1,U1,O2; U2$'' while the other has Gauss code ``$U1; O1,O2; U2$''. In the first of these Crossing 1 is both even and odd in the first component while Crossing 1 is either even or odd depending upon whether you examine the first or second link component.\\

We may circumvent this pitfall by defining even and odd for self-crossings based on the parity of self-crossing in each component while labeling crossings shared by 2 components as link crossings.\\

\begin{figure}[h!]
\centering
    \includegraphics[height=3.3cm]{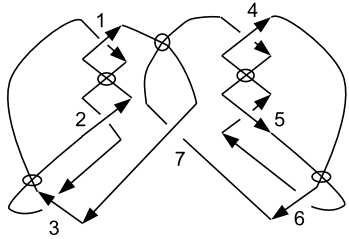}
\caption{Link Parity Crossing Labelings}
\label{fig:LinkParityExample}
\end{figure}

\begin{exa}The link in Figure \ref{fig:LinkParityExample} has Gauss code \[``O1,O7,O3,U1,U2,U3,O2; U4,O5,U6,U5,O4,O6,U7'' \]  Crossings 1, 2, 4 and 5 are odd, crossings 3, and 6 are even and crossing 7 is a link crossing.
\end{exa}

As we did with odd crossings, we investigate the invariance of crossings between links to provide the framework for generalizing the parity polynomials. We will call a crossing where both arcs involved are in the same link component a self-crossing while a crossing whose arcs are in separate components a link-crossing.\\

\begin{itemize}
	\item \textbf{Reidemeister I}\\
	In a Reidemeister I move only a single link component is involved hence is always an even self-crossing.\\
	
	\item \textbf{Reidemeister II}\\
	The two arcs involved in a second Reidemeister move are either both in the same component or each is in a different component. Thus either both crossings above are self-crossings or both crossings are link-crossings.\\

\item \textbf{Reidemeister III}\\
	In a third Reidemeister move either all strands involved are in one component, or two in one component and one in another or all three in separate components. Thus either all crossings are self-crossings, or one self-crossing and two link-crossings or three link-crossings respectively.\\
\end{itemize}

This motivates the following definitions for link parity polynomials.\\

\begin{defn}Given a diagram D for a virtual knot K the \emph{(link) parity bracket polynomial of K} is defined by the relations in Figures \ref{fig:LinkParityBracketSmooth} and \ref{fig:LinkParityBracketReduce}.\\
\end{defn}

\begin{figure}[h!]
\centering
    \includegraphics[width=0.7\textwidth]{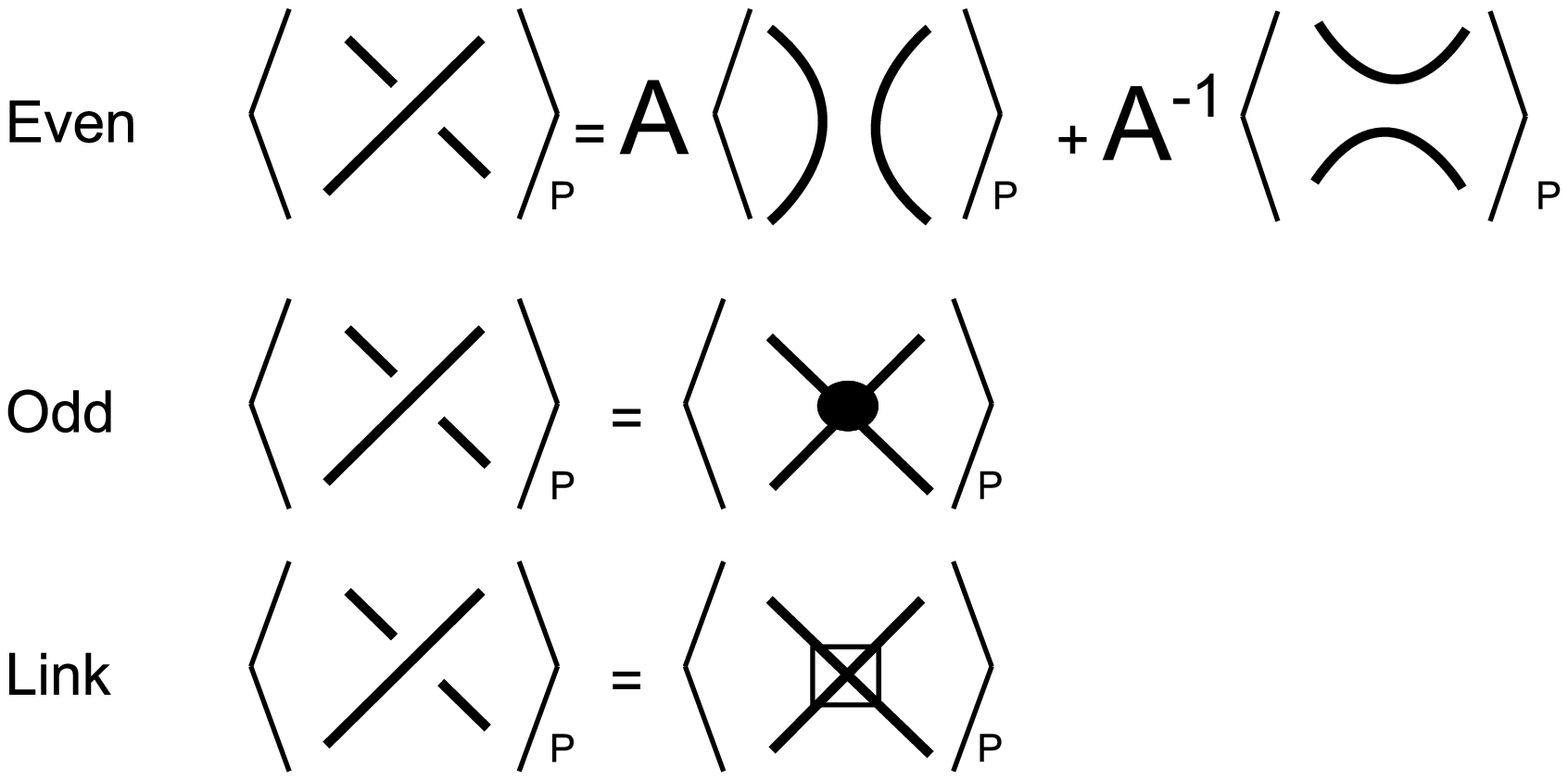}
\caption{Link Parity Bracket Polynomial Smoothing Relations}
\label{fig:LinkParityBracketSmooth}
\end{figure}

\begin{figure}[h!]
\centering
    \includegraphics[width=0.9\textwidth]{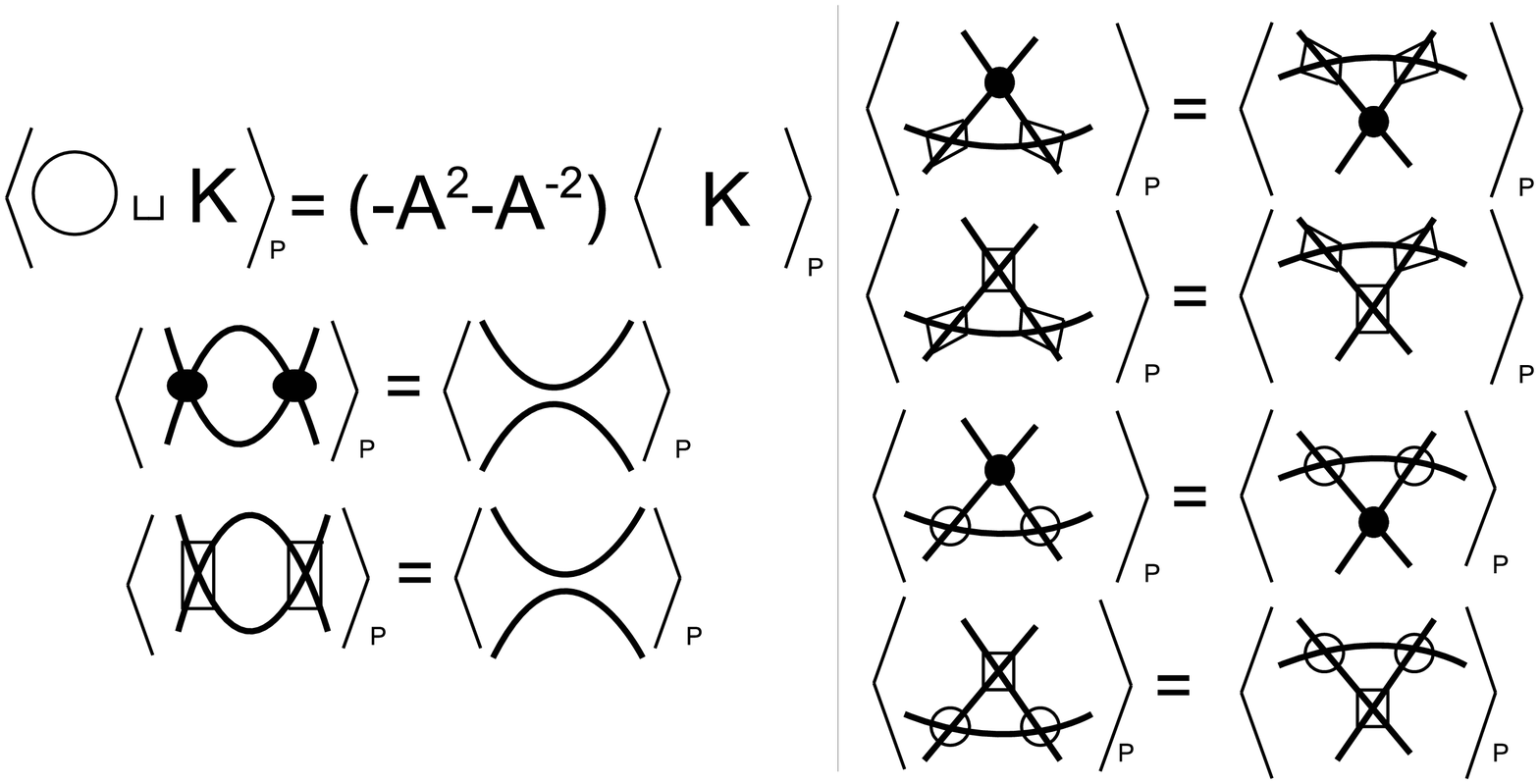}
\caption{Link Parity Bracket Polynomial Reduction Relations}
\label{fig:LinkParityBracketReduce}
\end{figure}

\begin{defn}
Given a virtual link K with diagram D, The \emph{normalized (link) parity bracket polynomial of K} is given by
\[PF_{A}(K) = PF_{A}(D) = (-A)^{-3\omega(D)}  \langle D \rangle_{P}\]
 where $\langle D \rangle_{P}$ is the parity bracket polynomial of D and
 \[\omega(D) = \textrm{writhe}(D) = (\#\textrm{ positive crossings in D}) - (\#\textrm{ negative crossings in D}).\]
\end{defn}

\begin{thm} The (link) parity bracket polynomial is an invariant of virtual knots.\\
\end{thm}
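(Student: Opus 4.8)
The plan is to follow the template of the invariance proof for the (knot) parity bracket polynomial, verifying the three classical Reidemeister moves, but now organizing the argument around the trichotomy of crossings into even self-crossings, odd self-crossings, and link-crossings. As in the knot case, even self-crossings are expanded by the skein relations of Figure \ref{fig:LinkParityBracketSmooth} while odd self-crossings and link-crossings are recorded as graphical nodes; the invariance check at each move therefore splits into a classical bracket computation (wherever an even self-crossing is available to smooth) and a verification that the surviving graphical coefficients agree under the reduction relations of Figure \ref{fig:LinkParityBracketReduce}. The behavior of the three crossing types under each Reidemeister move has already been worked out in the itemized discussion preceding the theorem, and that classification is what drives the case analysis.

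First I would treat Reidemeister I and II. An RI move lives in a single component, so it creates or destroys an even self-crossing, and invariance follows from the writhe normalization exactly as before. For RII the classification gives two possibilities: the pair of crossings is either two self-crossings (hence both even or both odd, by the component-wise parity argument) or two link-crossings. The two-even case is the classical bracket argument; the two-odd case is absorbed by the graphical Reidemeister II reduction, and the two-link case by the analogous link-node reduction, both supplied in Figure \ref{fig:LinkParityBracketReduce}.

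The real work is Reidemeister III, where the classification yields three triangle types: three self-crossings, one self-crossing with two link-crossings, and three link-crossings. When all three are self-crossings, the component-wise parity argument guarantees at least one even self-crossing, so I can run the usual trick---resolving the skein relation at that even crossing turns the RIII move into a pair of RII moves, whose invariance is already in hand. The same trick settles the one-self-two-link triangle whenever the self-crossing happens to be even. What is left over---an odd self-crossing flanked by two link-crossings, and the all-link triangle---has no crossing at which the skein relation may be applied, since every node is graphical.

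I expect this all-graphical Reidemeister III to be the main obstacle, and it is precisely the feature that distinguishes the link theory from the knot theory: in the knot case parity forces an even crossing into every RIII triangle, so a graphical RIII never arises, whereas with several components the three-link triangle genuinely occurs. The remaining task is thus to show that the two graphical coefficients standing before and after such a move are equal as graphical diagrams under the relations of Figure \ref{fig:LinkParityBracketReduce}---graphical detour together with graphical Reidemeister II for odd self-nodes and for link-nodes. Concretely, I would enumerate the finitely many node-type decorations of the RIII triangle and, for each, produce an explicit sequence of graphical detour and graphical RII moves carrying one coefficient to the other, presenting a single representative reduction in the style of Figure \ref{fig:R3GraphicalArrow} since the remaining patterns follow identically. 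Confirming that the imposed reduction relations are in fact strong enough to close every such pattern is the one step that genuinely goes beyond transcribing the knot-case proof.
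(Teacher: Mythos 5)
Your case analysis for the three classical Reidemeister moves matches the paper's proof essentially line for line: the same trichotomy into even self-crossings, odd self-crossings, and link-crossings, the same use of the writhe normalization for RI, the same split of RII into the two-self and two-link cases, and the same three triangle types for RIII with the ``usual trick'' applied at an even self-crossing whenever one is available. One point of difference in emphasis: you flag the all-graphical RIII configurations (odd self-crossing with two link-crossings, and three link-crossings) as the main obstacle requiring an enumeration of node decorations and explicit sequences of graphical detour and graphical RII moves. The paper instead disposes of both cases in one line as ``immediate by the reduction relations'' of Figure \ref{fig:LinkParityBracketReduce} --- that is, the imposed graphical relations are taken to be strong enough by construction, so no separate combinatorial verification is carried out. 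Your instinct to check that the relations actually close every such pattern is sound and arguably more careful than the paper, but it is not a departure in method.

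There is one genuine omission: you verify only the three classical Reidemeister moves, whereas the paper's proof includes a fourth case, the mixed move (the detour-type move in which a strand carrying only virtual crossings passes a classical crossing). The purely virtual moves are harmless because they do not change the Gauss code, but the mixed move does involve a classical crossing and must be checked; the paper handles it by the standard argument when that crossing is an even self-crossing and by the reduction relations when it is an odd self-crossing or a link-crossing. Your proposal as written would not establish invariance under this move.
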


\begin{proof}

\begin{enumerate}
\item \textbf{RI: } A crossing involved in a Reidemeister I move is always an even self-crossing, hence invariance follows from the writhe normalization as in the normalized bracket polynomial.
\item \textbf{RII: } Reidemeister II follows for even self-crossing as in the classical case and for odd self-crossing and link-crossings by the reduction relations.
\item \textbf{RIII: } Here we have three cases to consider:
\begin{enumerate}
  \item Reidemeister III for three self-crossings follows by applying the usual trick at a single even crossing.
  \item For a Reidemeister III involving one self-crossings and two link crossings, if the self-crossing is even then the usual trick with the RII-like move for link crossings gives invariance.  If the self-crossing is odd then the result is immediate by the reduction relations.
  \item Reidemeister II for three link-crossings also follows immediately from the reduction relations.
\end{enumerate}
\item \textbf{Mixed Move: } For an even self-crossing this is the standard proof and for an odd self-crossing or link-crossing it follows from the reduction relations.
\end{enumerate} \end{proof}

\begin{exa}
Figure \ref{fig:ParityLinkBracket} displays the calculation for the (link) parity bracket polynomial for the given 2-component link.
\end{exa}

\begin{figure}[h!]
\centering
    \includegraphics[width=0.7\textwidth]{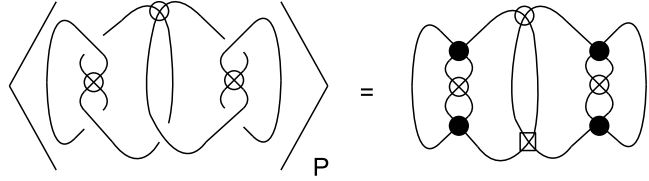}
\caption{Graphical Link Parity Bracket Polynomial Example}
\label{fig:ParityLinkBracket}
\end{figure}

Similarly for the parity arrow polynomial we have:\\

\begin{defn}Given a diagram D for a virtual link K the \emph{(link) parity arrow polynomial of K} is defined by the relations in Figures \ref{fig:LinkParityBracketSmooth} and \ref{fig:LinkParityBracketReduce}.\\
\end{defn}

\begin{figure}[h!]
\centering
    \includegraphics[width=0.9\textwidth]{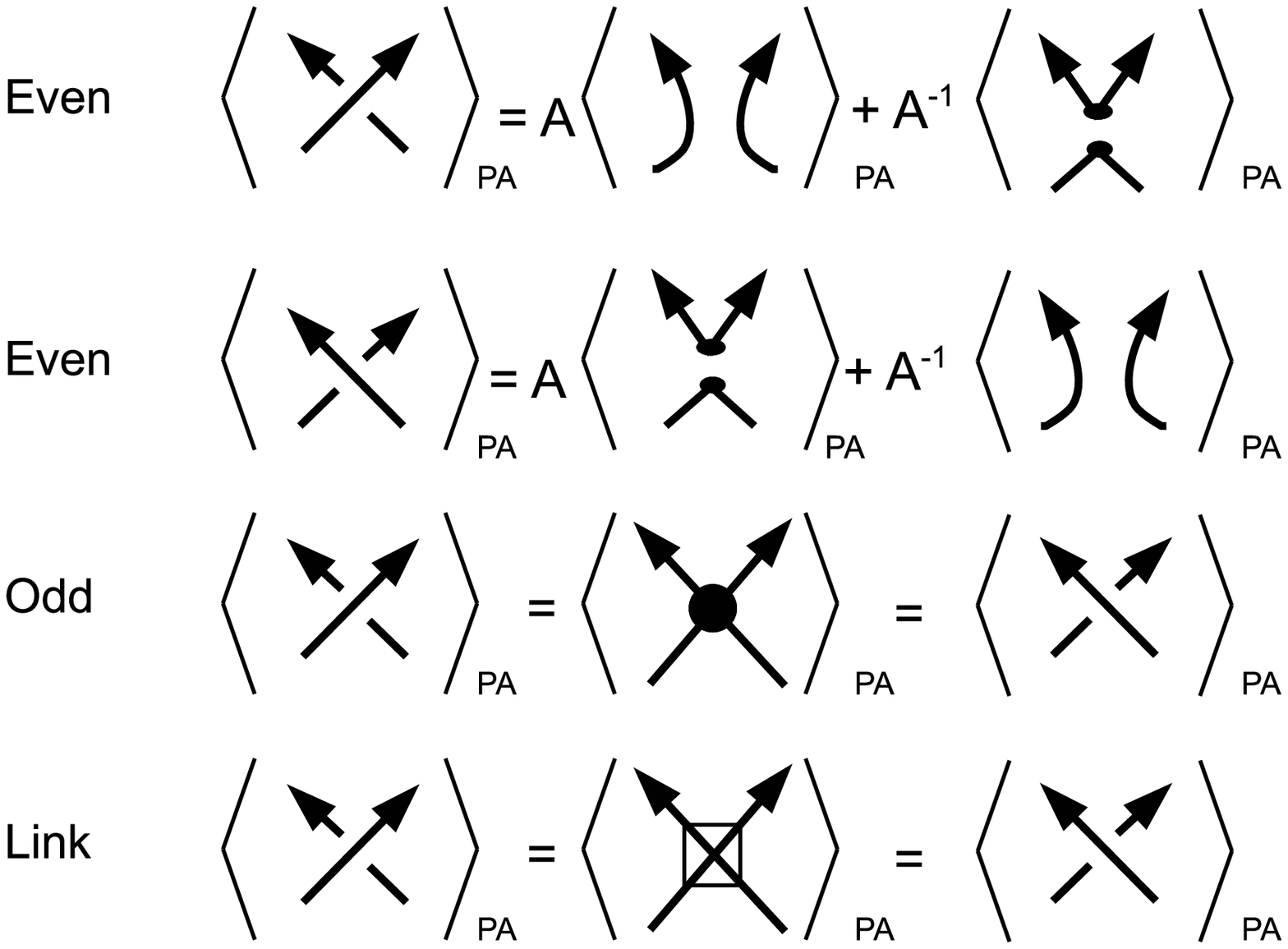}
\caption{Link Parity Arrow Polynomial Smoothing Relations}
\label{fig:LinkParityArrowSmooth}
\end{figure}

\begin{figure}[h!]
\centering
    \includegraphics[width=0.9\textwidth]{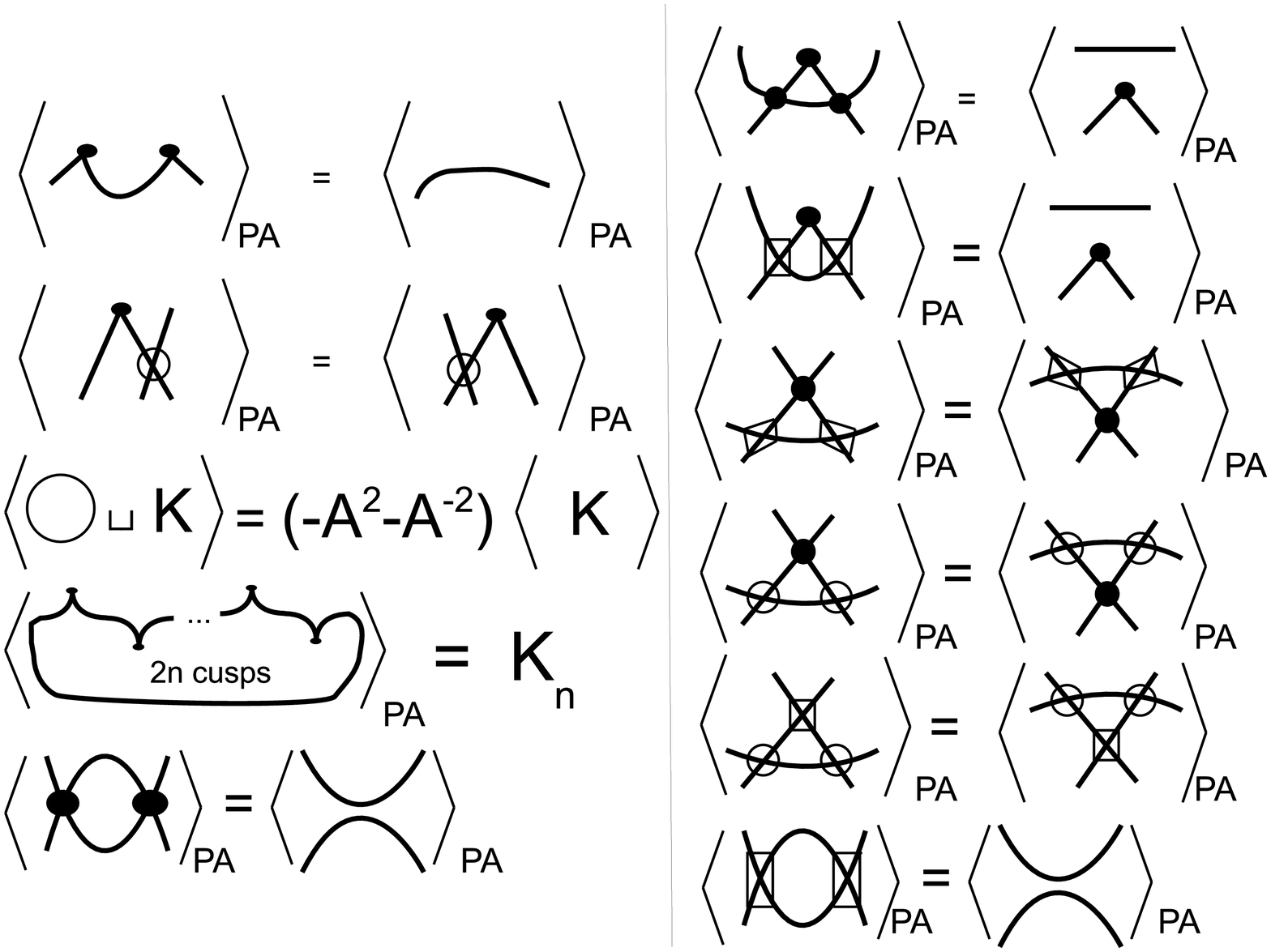}
\caption{Link Parity Arrow Polynomial Reduction Relations}
\label{fig:LinkParityArrowReduce}
\end{figure}

\begin{defn} Given a virtual link K with diagram D, The \emph{normalized (link) parity arrow polynomial of K} is given by
\[PAP_{A}(K) = PAP_{A}(D) = (-A)^{-3\omega(D)}  \langle D \rangle_{PA}\]
 where $\langle D \rangle_{PA}$ is the parity arrow polynomial of D and
 \[\omega(D) = \textrm{writhe}(D) = (\#\textrm{ positive crossings in D}) - (\#\textrm{ negative crossings in D}).\]
\end{defn}

\begin{thm} The (link) parity arrow polynomial is an invariant of virtual knots.\\
\end{thm}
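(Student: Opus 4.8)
The plan is to combine the two ingredients already established: the invariance argument for the (knot) parity arrow polynomial, and the case analysis by self-crossings versus link-crossings that was used to prove invariance of the (link) parity bracket polynomial. In fact the whole argument is essentially identical to the proof for the (link) parity bracket polynomial, with the bracket reduction relations replaced throughout by the arrow reduction relations of Figure \ref{fig:LinkParityArrowReduce}, and with the additional bookkeeping of oriented versus disoriented smoothings---and the cusps the latter create---handled exactly as in the non-parity arrow polynomial \cite{ExtendedBracket}. Since the normalized polynomial is an invariant precisely when $\langle D\rangle_{PA}$ is unchanged (up to writhe) under each Reidemeister move, I would verify the moves one at a time using the crossing trichotomy from Section \ref{sec:LinkParityPolys}.

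First I would dispatch Reidemeister I: a crossing in an RI move is always an even self-crossing, so it is expanded by the ordinary arrow smoothing relations of Figure \ref{fig:LinkParityArrowSmooth}, and invariance follows from the $(-A)^{-3\omega(D)}$ writhe normalization exactly as for $AP(K)$. For Reidemeister II I would split into the even self-crossing case, which reduces to the arrow polynomial RII computation (the oriented and disoriented contributions cancel as in \cite{ExtendedBracket}), and the odd self-crossing and link-crossing cases, which are absorbed directly by the graphical reduction relations of Figure \ref{fig:LinkParityArrowReduce}.

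The Reidemeister III move is where the content lies, and I would organize it by the same trichotomy isolated in Section \ref{sec:LinkParityPolys}: three self-crossings, one self-crossing with two link-crossings, or three link-crossings. The parity count of Section \ref{sec:ParityRMs} together with the link-crossing analysis guarantees that in every case either there is at least one even self-crossing at which to pivot, or all three participating crossings are graphical or link nodes. When an even self-crossing is present I would apply the usual RIII trick at that crossing, now using the cusped ``Reidemeister II''-like relation from the arrow reduction set to slide the third strand through; this is exactly the manipulation shown in Figure \ref{fig:R3GraphicalArrow} for the corresponding mixed move, and the remaining mixed and all-link configurations then follow by the same token directly from the reduction relations. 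Finally, the detour (mixed) move is handled by the reduction relations when the crossing is an odd self-crossing or a link-crossing, and by the standard argument when it is an even self-crossing.

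The main obstacle I anticipate is confirming that the cusps produced by the disoriented smoothings behave correctly under the mixed RIII moves involving link-crossings: one must check that the cusped ``RII''-like relation, in concert with the graphical detour move, genuinely lets the third strand pass without inadvertently producing a forbidden move or leaving unpaired cusps. Because the link-crossing reduction relations were chosen to match the arrow-number bookkeeping, I expect each such check to collapse to a finite diagrammatic verification of the type illustrated in Figure \ref{fig:R3GraphicalArrow}, so the difficulty is really the careful enumeration of the (self/link, even/odd) cases rather than any new idea.
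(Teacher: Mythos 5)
Your proposal follows the paper's argument exactly: the paper likewise reduces the claim to the (link) parity bracket proof, runs the same self-crossing/link-crossing case analysis over RI, RII, RIII and the mixed move, and isolates as the only new content the Reidemeister III configuration with one even self-crossing and two link-crossings, handled via the cusped ``RII''-like relation in the usual RIII trick (as in Figure \ref{fig:R3GraphicalArrow}). No substantive difference from the paper's proof.
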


\begin{proof} This proof is nearly identical to the one above. The only difference is the proof for the Reidemeister III move involving an even self-crossing and two link crossings.  Here we use a cusped RII-like move for link crossing in the usual trick for RIII invariance.
\end{proof}

\begin{rem}We have used a similar extension to links in \cite{ParityBiquandles} to generalize the construction of Parity Biquandles.
\end{rem}
\section{Parity and Categorifications}

We would like to categorify the parity polynomials in an analogous manner to the original polynomials by adding an additional grading, similarly to the construction of the categorification of the arrow polynomial, based on the equivalence classes of graphified flat knot diagrams. However, it is fairly simple to construct an example showing this to be naive.  For instance, consider the virtual knot in Figure \ref{fig:KishinoR2}.\\
\begin{figure}[h!]
\centering
    \includegraphics[height=1.8cm]{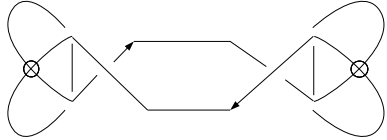}
\caption{Kishino Knot}
\label{fig:KishinoR2}
\end{figure}

Performing the available Reidemeister II move, the resulting diagram is one of three virtual knots often referred to as Kishino knots. Figure \ref{fig:KishinoR2Bracket} shows that both the parity bracket polynomial and parity arrow polynomial of the knot is the graphified version of the diagram as there are no graphical Reidemeister II moves  or detour moves available.\\

\begin{figure}[h!]
\centering
    \includegraphics[height=1.8cm]{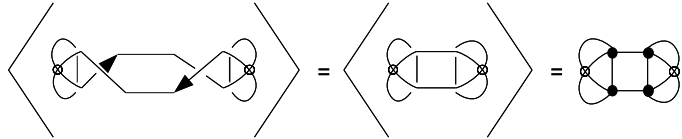}
\caption{Graphical Parity Bracket Polynomial of a Kishino Knot}
\label{fig:KishinoR2Bracket}
\end{figure}

However, when we consider the Khovanov complex (the arrow polynomial categorifications have equivalent complexes) as in Figure \ref{fig:KishinoParityProblem} we can see that $d^2 \neq 0$.  In particular, the all-A and all-$A^{-1}$ states are both graphically equivalent to two circles while in the middle we have the top state graphically equivalent to a graphified Kishino knot and the bottom state graphically equivalent to 3 circles. Hence, as shown in the figure, the upper differentials are both the 0-map.  Considering the element $(x \otimes 1)$, we see
\[
d^2(x \otimes 1) = (1 \otimes m) \circ (\Delta \otimes 1)(x \otimes 1) = (1 \otimes m)(x \otimes x \otimes 1) = x \otimes x \neq 0
\]

\begin{figure}[h!]
\centering
    \includegraphics[height=3.8cm]{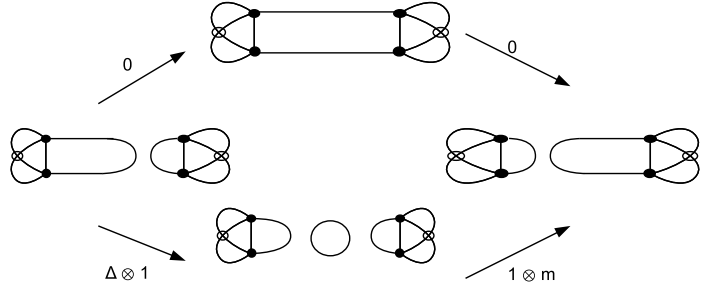}
\caption{Graphical Parity Kishino Complex}
\label{fig:KishinoParityProblem}
\end{figure}

However, this does not prevent us from applying Manturov's parity filtration along with the forgetful version of the parity categorifications.  In doing so we clearly lose some of the power of the parity polynomial (for instance the Kishino knot is no longer detected) but we still retain an invariant which is capable of detecting non-classicality. We can define the \emph{parity Khovanov homology} (respectively, \emph{parity arrow categorification})  to be the homology theory produced by first applying Manturov's parity filtration to the given knot and then computing the Khovanov homology (respectively, the arrow categorification) of the resulting knot.  Similarly, define the \emph{parity Khovanov invariant} (respectively, \emph{parity arrow invariant}) to be the resulting Poincar\'{e} polynomial as produced previously.

For instance, consider the knot in Figure \ref{fig:FiltrationExample}.  Applying the parity Khovanov homology we have that the original knot has Khovanov invariant
\[\frac{1}{q^9 t^3}+\frac{1}{q^8 t^2}+\frac{1}{q^7 t^3}+\frac{1}{q^7 t^2}+\frac{1}{q^6 t^2}+\frac{1}{q^6 t}+\frac{1}{q^5 t^2}+\frac{1}{q^5}+\frac{1}{q^4
   t}+\frac{1}{q^3}\]

Applying the filtration and turning crossings 1 and 4 into virtual crossings we have that the underlying knot at this level of the filtration is the two crossing virtual knot. Hence virtual knot 4.9 has parity Khovanov invariant
\[\frac{1}{q^6 t^2}+\frac{1}{q^4 t^2}+\frac{1}{q^4 t}+\frac{1}{q^3}+\frac{1}{q^2 t}+\frac{1}{q}\]
and moreover is non-classical.\\

\begin{figure}[h!]
\centering
    \includegraphics[width=.5\textwidth]{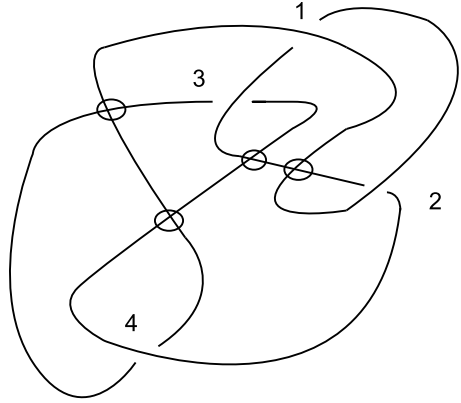}
\caption{Virtual Knot 4.9}
\label{fig:FiltrationExample}
\end{figure}

Using Jeremy Green's virtual knot table \cite{GreenTables} we have been able to calculate the parity categorifications on knots with at most 6 real crossings.  Table \ref{tab:distinguishedbyparity} is a collection of planar diagram codes for 8 knots which are not distinguished from the unknot via the bracket polynomial, arrow polynomial or their categorifications, but are distinguished from the unknot via the parity arrow categorification. Please see the Appendix for an explanation of our planar diagram conventions. The top four knots have parity arrow invariant
\[\frac{\text{vg}(2,1) K[2]}{q^3 t}+\frac{\text{vg}(1,2) K[1]}{q^3}+\frac{\text{vg}(2,-1) K[2]}{q t}+q \text{vg}(1,-2) K[1]+\frac{2 K[1]}{q}\]
while the lower four have parity arrow invariant
\[q^3 t \text{vg}(2,-1) K[2]+q^3 \text{vg}(1,-2) K[1]+q t \text{vg}(2,1) K[2]+\frac{\text{vg}(1,2) K[1]}{q}+2 q K[1] \]

{\small
\begin{table} [h!]
\begin{center}
\begin{tabular}{| l | l l l l | }
\hline
6.5508  & \multicolumn{4}{l|}{PD[X[4, 2, 5, 1], X[7, 4, 8, 3], X[10, 6, 11, 5], Y[12, 3, 1, 2],}\\
 & \multicolumn{4}{r|}{Y[9, 7, 10, 6],   Y[8, 12, 9, 11]]}  \\ \hline
6.5627  & \multicolumn{4}{l|}{PD[X[4, 2, 5, 1], X[7, 4, 8, 3], X[10, 6, 11, 5], Y[12, 3, 1, 2],}\\
 & \multicolumn{4}{r|}{Y[9, 7, 10, 6], Y[11, 9, 12, 8]]} \\ \hline
6.7613  & \multicolumn{4}{l|}{PD[X[4, 2, 5, 1], X[7, 4, 8, 3], X[9, 7, 10, 6], Y[12, 3, 1, 2],}\\
 & \multicolumn{4}{r|}{Y[10, 6, 11, 5], Y[8, 12, 9, 11]]}  \\ \hline
6.7701  & \multicolumn{4}{l|}{PD[X[4, 2, 5, 1], X[7, 4, 8, 3], X[9, 7, 10, 6], Y[12, 3, 1, 2],}\\
 & \multicolumn{4}{r|}{Y[10, 6, 11, 5], Y[11, 9, 12, 8]]}  \\ \hline
6.24828  & \multicolumn{4}{l|}{PD[X[6, 4, 7, 3], X[9, 5, 10, 4], X[11, 8, 12, 7], Y[12, 3, 1, 2],}\\
 & \multicolumn{4}{r|}{Y[1, 11, 2, 10], Y[8, 6, 9, 5]]}  \\ \hline
6.37012  & \multicolumn{4}{l|}{PD[X[6, 4, 7, 3], X[8, 6, 9, 5],  X[11, 8, 12, 7], Y[12, 3, 1, 2],}\\
 & \multicolumn{4}{r|}{Y[1, 11, 2, 10], Y[9, 5, 10, 4]]} \\ \hline
6.60677  & \multicolumn{4}{l|}{PD[X[3, 7, 4, 6], X[9, 5, 10, 4], X[11, 8, 12, 7], Y[12, 3, 1, 2],}\\
 & \multicolumn{4}{r|}{Y[1, 11, 2, 10], Y[8, 6, 9, 5]]}  \\ \hline
6.65816  &  \multicolumn{4}{l|}{PD[X[3, 7, 4, 6], X[8, 6, 9, 5], X[11, 8, 12, 7], Y[12, 3, 1, 2],}\\
 & \multicolumn{4}{r|}{Y[1, 11, 2, 10], Y[9, 5, 10, 4]]}  \\ \hline
\end{tabular}
\end{center}
\caption{Undistinguished from Unknot by Categorification but Distinguished by Parity}\label{tab:distinguishedbyparity}
\end{table}
}

\subsection{Link Parity Polynomials and Categorification}
Following the construction presented in Section \ref{sec:LinkParityPolys} we can extend the graphical polynomials to links.  However, the graphical coefficients for links suffer a similar problem to that of knots when categorified.  As with knots we map use the forgetful map to send the graphical link coefficients to virtual crossings.  The effect of this for links is rather unfortunate as it reduces a link to the disjoint union of its components. (This is easiest to see by thinking of the chord diagram.) Hence it reduces the link parity categorification back to the knot parity categorification setting.\\

\subsection{Intriguing Examples}
Using Jeremy Green's tables \cite{GreenTables} we have calculated the above invariants as along with the Sawollek polynomial and z-parity Sawollek polynomial(\cite{ParityBiquandles}) for knots with at most 6 real crossings.  The knots in Figure \ref{fig:6_32008} and Figure \ref{fig:6_73583} are special in that they are not distinguished from the unknot via any of the invariants. Knot $6.32008$ has 4 odd crossings while Knot $6.73583$ has no odd crossings and both knots are trivial as flats. Using a 2-cable Jones polynomial calculator adapted from Dror Bar-Natan's ``faster'' Jones polynomial Calculator \cite{KnotAtlas} we have been able to distinguish each of these knots from one-another and from the unknot.\\

\begin{figure}[h!]
\centering
    \includegraphics[height=3.3cm]{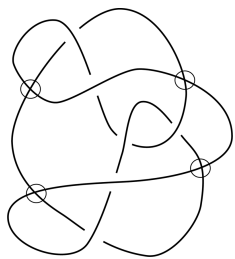}
\caption{Knot 6.32008}
\label{fig:6_32008}
\end{figure}

\begin{figure}[h!]
\centering
    \includegraphics[height=3.3cm]{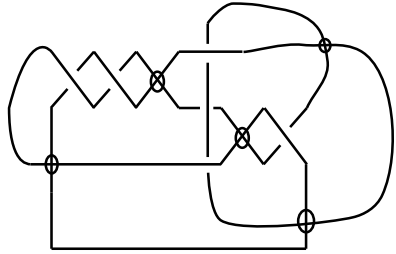}
\caption{Knot 6.73583}
\label{fig:6_73583}
\end{figure}

\appendix
\section{Appendix}

\subsection{Computational Results}
\subsubsection{Planar Diagram Conventions}

The following examples and programs compute the associated invariant based on a planar diagram code for a diagram for the given knot whose arcs have been consecutively labeled.  Our conventions are listed in Figure \ref{fig:PDConventions}.\\

\begin{figure}[h!]
\centering
    \includegraphics[height=1.7cm]{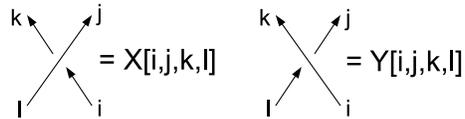}
\caption{Planar Diagram Code Conventions}
\label{fig:PDConventions}
\end{figure}

\begin{exa} Virtual knot 3.1 as labeled in Figure \ref{fig:PDConventionsEx} has planar diagram code:
\[
PD[X[1,5,2,4],X[5,4,6,3],Y[6,3,1,2]]
\]
\end{exa}

\begin{figure}[h!]
\centering
    \includegraphics[height=3.3cm]{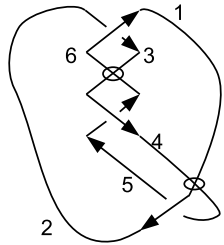}
\caption{Virtual Knot 3.1}
\label{fig:PDConventionsEx}
\end{figure}

\subsubsection{Parity Polynomials}

The following table displays the parity bracket polynomial and parity arrow polynomial for virtual knots with at most four crossings. Naming conventions are as in \cite{GreenTables}. Graphical coefficients are labeled by $D_2 \left[ 1 \right], D_2 \left[ 2 \right], D_2 \left[ 3 \right] $ and $D_4 \left[ 1 \right], \ldots, D_4 \left[ 4 \right] $ corresponding to the following diagrams.\\

\[
\begin{array}{cc}
   D_2 \left[ 1 \right] = \DTwoOne   &  D_2 \left[2 \right] = \DTwoTwo   \\
\end{array}
\]

\[
\begin{array}{c}
  D_2 \left[3 \right] = \DTwoThree   \\
\end{array}
\]

\[
\begin{array}{cc}
  D_4 \left[1 \right] = \DFourOne    &   D_4 \left[2 \right] = \DFourTwo   \\
   & \\
  D_4 \left[3 \right] = \DFourThree   &  D_4 \left[4 \right] = \DFourFour   \\
\end{array}
\]

It has been shown by Dye and Kauffman (Theorem 4.1 of \cite{DKMinimalSurface}) that graphical coefficients $D_4 \left[1 \right]$ and $D_4 \left[2 \right]$  have surface genus $s(D_4 \left[1 \right] )=2$ and $s(D_4 \left[2 \right] )=2$. Using this fact we can see that the parity polynomials are able to give a better bound on the genus than that of the arrow polynomial \cite{DKVirtualCrossingNumber} for certain knots. In particular we have $s(K) \geq 2$ for $K \in \{4.1, 4.2, 4.4, 4.5, 4.7, 4.8, 4.55, 4.56, 4.76, 4.77 \}$.

\newpage

{\small
\begin{longtable}{|l|p{5.2cm}|p{5.2cm}|}
\caption[Parity Bracket and Parity Arrow Polynomial Calculations]{Parity Bracket and Parity Arrow Polynomial Calculations} \\ \hline
Knot & Parity Bracket & Parity Arrow \\ \hline
\endfirsthead
\hline
Knot & Parity Bracket & Parity Arrow \\ \hline
\endhead
$2.1 $ & $ -A^2-\frac{1}{A^2} $ & $ -A^2-\frac{1}{A^2} $ \\ \hline
$3.1 $ & $ A^6-\text{D}_{2}[1] A^2+A^2 $ & $ A^6-\text{D}_{2}[1] A^2+A^2$ \\ \hline
$3.2 $ & $ -A^2-\frac{1}{A^2} $ & $ -A^2-\frac{1}{A^2} $ \\ \hline
$3.3 $ & $ A^6-\text{D}_{2}[1] A^2+A^2 $ & $ A^6-\text{D}_{2}[1] A^2+A^2 $ \\ \hline
$3.4 $ & $ -\frac{\text{D}_{2}[1]}{A^2}+\frac{1}{A^2}+\frac{1}{A^6} $ & $ -\frac{\text{D}_{2}[1]}{A^2}+\frac{1}{A^2}+\frac{1}{A^6} $ \\ \hline
$3.5 $ & $ A^{18}-A^{10}-A^6-A^2 $ & $ \text{K}_{1}^2 A^{14}-A^{14}-\text{K}_{1}^2 A^{10}-A^2 $ \\ \hline
$3.6 $ & $ A^{18}-A^{10}-A^6-A^2 $ & $ A^{18}-A^{10}-A^6-A^2 $ \\ \hline
$3.7 $ & $ -A^2-\frac{1}{A^2} $ & $ -A^{10}+\text{K}_{1}^2 A^6-A^6-\text{K}_{1}^2 A^2 $ \\ \hline

$4.1 $ & $ \text{D}_{4}[1] $ & $ \text{D}_{4}[1]$ \\ \hline
$4.2 $ & $ \text{D}_{4}[2] $ & $\text{D}_{4}[2] $\\ \hline
$4.3 $ & $ -A^2-\frac{1}{A^2} $ & $ -A^2-\frac{1}{A^2} $ \\ \hline
$4.4 $ & $ \text{D}_{4}[1] $ & $ \text{D}_{4}[1]$ \\ \hline
$4.5 $ & $ \text{D}_{4}[2] $ & $ \text{D}_{4}[2] $ \\ \hline
$4.6 $ & $ -A^2-\frac{1}{A^2} $ & $ -A^2-\frac{1}{A^2} $ \\ \hline
$4.7 $ & $ \text{D}_{4}[2] $ & $ \text{D}_{4}[2] $ \\ \hline
$4.8 $ & $ \text{D}_{4}[1] $ & $ \text{D}_{4}[1]$ \\ \hline
$4.9 $ & $ \text{D}_{2}[1] A^8-2 A^8-A^6-2 A^4-A^2 $ & $ \text{D}_{2}[2] A^8+2 \text{K}_{1} A^6-A^6-A^2 $ \\ \hline
$4.10 $ & $ A^{12}-A^6-A^4-A^2 $ & $ -\text{K}_{1} A^{10}+\text{K}_{1} A^6-A^6-A^2 $ \\ \hline
$4.11 $ & $ -A^4-A^2+\text{D}_{2}[1]-2-\frac{1}{A^2}-\frac{1}{A^4} $ & $ \text{K}_{1} A^2-A^2+\text{D}_{2}[3]+\frac{\text{K}_{1}}{A^2}-\frac{1}{A^2} $ \\ \hline
$4.12 $ & $ -A^2-\frac{1}{A^2} $ & $ -A^2-\frac{1}{A^2} $ \\ \hline
$4.13 $ & $ -A^2-\frac{1}{A^2} $ & $ -A^2-\frac{1}{A^2} $ \\ \hline
$4.14 $ & $ \frac{\text{D}_{2}[1]}{A^8}-\frac{1}{A^2}-\frac{2}{A^4}-\frac{1}{A^6}-\frac{2}{A^8} $ & $ \frac{2 \text{K}_{1}}{A^6}+\frac{\text{D}_{2}[2]}{A^8}-\frac{1}{A^2}-\frac{1}{A^6} $ \\ \hline
$4.15 $ & $ A^{12}-A^6-A^4-A^2 $ & $ -\text{K}_{1} A^{10}+\text{K}_{1} A^6-A^6-A^2 $ \\ \hline
$4.16 $ & $ \text{D}_{2}[1] A^8-2 A^8-A^6-2 A^4-A^2 $ & $ \text{D}_{2}[2] A^8+2 \text{K}_{1} A^6-A^6-A^2 $ \\ \hline
$4.17 $ & $ -A^4-A^2+\text{D}_{2}[1]-2-\frac{1}{A^2}-\frac{1}{A^4} $ & $ \text{K}_{1} A^2-A^2+\text{D}_{2}[3]+\frac{\text{K}_{1}}{A^2}-\frac{1}{A^2} $ \\ \hline
$4.18 $ & $ -A^2-\frac{1}{A^2} $ & $ -A^2-\frac{1}{A^2} $ \\ \hline
$4.19 $ & $ -A^4-A^2+\text{D}_{2}[1]-2-\frac{1}{A^2}-\frac{1}{A^4} $ & $ \text{K}_{1} A^2-A^2+\text{D}_{2}[2]+\frac{\text{K}_{1}}{A^2}-\frac{1}{A^2} $ \\ \hline
$4.20 $ & $ -\frac{1}{A^2}-\frac{1}{A^4}-\frac{1}{A^6}+\frac{1}{A^{12}} $ & $ \frac{\text{K}_{1}}{A^6}-\frac{\text{K}_{1}}{A^{10}}-\frac{1}{A^2}-\frac{1}{A^6} $ \\ \hline
$4.21 $ & $ \frac{\text{D}_{2}[1]}{A^8}-\frac{1}{A^2}-\frac{2}{A^4}-\frac{1}{A^6}-\frac{2}{A^8} $ & $ \frac{2 \text{K}_{1}}{A^6}+\frac{\text{D}_{2}[2]}{A^8}-\frac{1}{A^2}-\frac{1}{A^6} $ \\ \hline
$4.22 $ & $ -\frac{1}{A^2}-\frac{1}{A^4}-\frac{1}{A^6}+\frac{1}{A^{12}} $ & $ \frac{\text{K}_{1}}{A^6}-\frac{\text{K}_{1}}{A^{10}}-\frac{1}{A^2}-\frac{1}{A^6} $ \\ \hline
$4.23 $ & $ A^{12}-A^6-A^4-A^2 $ & $ -\text{K}_{1} A^{10}+\text{K}_{1} A^6-A^6-A^2 $ \\ \hline
$4.24 $ & $ -\frac{1}{A^2}-\frac{1}{A^4}-\frac{1}{A^6}+\frac{1}{A^{12}} $ & $ \frac{\text{K}_{1}}{A^6}-\frac{\text{K}_{1}}{A^{10}}-\frac{1}{A^2}-\frac{1}{A^6} $ \\ \hline
$4.25 $ & $ -A^2-\frac{1}{A^2} $ & $ -A^2-\frac{1}{A^2} $ \\ \hline
$4.26 $ & $ \text{D}_{4}[3] $ & $ \text{D}_{4}[3] $ \\ \hline
$4.27 $ & $ -A^2-\frac{1}{A^2} $ & $ -A^2-\frac{1}{A^2} $ \\ \hline
$4.28 $ & $ \text{D}_{4}[4] $ & $ \text{D}_{4}[4] $ \\ \hline
$4.29 $ & $ \text{D}_{2}[1] A^8-2 A^8-A^6-2 A^4-A^2 $ & $ \text{D}_{2}[3] A^8+2 \text{K}_{1} A^6-A^6-A^2 $ \\ \hline
$4.30 $ & $ -A^4-A^2+\text{D}_{2}[1]-2-\frac{1}{A^2}-\frac{1}{A^4} $ & $ \text{K}_{1} A^2-A^2+\text{D}_{2}[2]+\frac{\text{K}_{1}}{A^2}-\frac{1}{A^2} $ \\ \hline
$4.31 $ & $ A^{12}-A^6-A^4-A^2 $ & $ -\text{K}_{1} A^{10}+\text{K}_{1} A^6-A^6-A^2 $ \\ \hline
$4.32 $ & $ -A^2-\frac{1}{A^2} $ & $ -A^2-\frac{1}{A^2} $ \\ \hline
$4.33 $ & $ -A^4-A^2+\text{D}_{2}[1]-2-\frac{1}{A^2}-\frac{1}{A^4} $ & $ \text{K}_{1} A^2-A^2+\text{D}_{2}[2]+\frac{\text{K}_{1}}{A^2}-\frac{1}{A^2} $ \\ \hline
$4.34 $ & $ \frac{\text{D}_{2}[1]}{A^8}-\frac{1}{A^2}-\frac{2}{A^4}-\frac{1}{A^6}-\frac{2}{A^8} $ & $ \frac{2 \text{K}_{1}}{A^6}+\frac{\text{D}_{2}[2]}{A^8}-\frac{1}{A^2}-\frac{1}{A^6} $ \\ \hline
$4.35 $ & $ -A^2-\frac{1}{A^2} $ & $ -A^2-\frac{1}{A^2} $ \\ \hline
$4.36 $ & $ -\frac{1}{A^2}-\frac{1}{A^4}-\frac{1}{A^6}+\frac{1}{A^{12}} $ & $ \frac{\text{K}_{1}}{A^6}-\frac{\text{K}_{1}}{A^{10}}-\frac{1}{A^2}-\frac{1}{A^6} $ \\ \hline
$4.37 $ & $ A^{12}-A^6-A^4-A^2 $ & $ -\text{K}_{1} A^{10}+\text{K}_{1} A^6-A^6-A^2 $ \\ \hline
$4.38 $ & $ -A^2-\frac{1}{A^2} $ & $ -A^2-\frac{1}{A^2} $ \\ \hline
$4.39 $ & $ -A^2-\frac{1}{A^2} $ & $ -A^2-\frac{1}{A^2} $ \\ \hline
$4.40 $ & $ -\frac{1}{A^2}-\frac{1}{A^4}-\frac{1}{A^6}+\frac{1}{A^{12}} $ & $ \frac{\text{K}_{1}}{A^6}-\frac{\text{K}_{1}}{A^{10}}-\frac{1}{A^2}-\frac{1}{A^6} $ \\ \hline
$4.41 $ & $ A^{12}-A^6-A^4-A^2 $ & $ -\text{K}_{1} A^{10}+\text{K}_{1} A^6-A^6-A^2 $ \\ \hline
$4.42 $ & $ -A^2-\frac{1}{A^2} $ & $ -A^2-\frac{1}{A^2} $ \\ \hline
$4.43 $ & $ -A^2-\frac{1}{A^2} $ & $ -A^2-\frac{1}{A^2} $ \\ \hline
$4.44 $ & $ -A^2-\frac{1}{A^2} $ & $ -A^2-\frac{1}{A^2} $ \\ \hline
$4.45 $ & $ \text{D}_{4}[3] $ & $ \text{D}_{4}[3] $ \\ \hline
$4.46 $ & $ -A^2-\frac{1}{A^2} $ & $ -A^2-\frac{1}{A^2} $ \\ \hline
$4.47 $ & $ \text{D}_{4}[4] $ & $ \text{D}_{4}[4] $ \\ \hline
$4.48 $ & $ \text{D}_{2}[1] A^8-2 A^8-A^6-2 A^4-A^2 $ & $ \text{D}_{2}[2] A^8+2 \text{K}_{1} A^6-A^6-A^2 $ \\ \hline
$4.49 $ & $ -A^4-A^2+\text{D}_{2}[1]-2-\frac{1}{A^2}-\frac{1}{A^4} $ & $ \text{K}_{1} A^2-A^2+\text{D}_{2}[3]+\frac{\text{K}_{1}}{A^2}-\frac{1}{A^2} $ \\ \hline
$4.50 $ & $ A^{12}-A^6-A^4-A^2 $ & $ -\text{K}_{1} A^{10}+\text{K}_{1} A^6-A^6-A^2 $ \\ \hline
$4.51 $ & $ -A^2-\frac{1}{A^2}  $ & $ -A^2-\frac{1}{A^2} $ \\ \hline
$4.52 $ & $ \frac{\text{D}_{2}[1]}{A^8}-\frac{1}{A^2}-\frac{2}{A^4}-\frac{1}{A^6}-\frac{2}{A^8} $ & $ \frac{2 \text{K}_{1}}{A^6}+\frac{\text{D}_{2}[2]}{A^8}-\frac{1}{A^2}-\frac{1}{A^6} $ \\ \hline
$4.53 $ & $ -A^2-\frac{1}{A^2} $ & $ -A^2-\frac{1}{A^2} $ \\ \hline
$4.54 $ & $ -A^2-\frac{1}{A^2} $ & $ -A^2-\frac{1}{A^2} $ \\ \hline
$4.55 $ & $ \text{D}_{4}[1] $ & $ \text{D}_{4}[1] $ \\ \hline
$4.56 $ & $ \text{D}_{4}[2] $ & $ \text{D}_{4}[2] $ \\ \hline
$4.57 $ & $ \text{D}_{2}[1] A^8-2 A^8-A^6-2 A^4-A^2 $ & $ \text{D}_{2}[3] A^8+2 \text{K}_{1} A^6-A^6-A^2 $ \\ \hline
$4.58 $ & $ -A^4-A^2+\text{D}_{2}[1]-2-\frac{1}{A^2}-\frac{1}{A^4} $ & $ \text{K}_{1} A^2-A^2+\text{D}_{2}[2]+\frac{\text{K}_{1}}{A^2}-\frac{1}{A^2} $ \\ \hline
$4.59 $ & $ -A^4-A^2+\text{D}_{2}[1]-2-\frac{1}{A^2}-\frac{1}{A^4} $ & $ \text{K}_{1} A^2-A^2+\text{D}_{2}[2]+\frac{\text{K}_{1}}{A^2}-\frac{1}{A^2} $ \\ \hline
$4.60 $ & $ -\frac{1}{A^2}-\frac{1}{A^4}-\frac{1}{A^6}+\frac{1}{A^{12}} $ & $ \frac{\text{K}_{1}}{A^6}-\frac{\text{K}_{1}}{A^{10}}-\frac{1}{A^2}-\frac{1}{A^6} $ \\ \hline
$4.61 $ & $ A^{12}-A^6-A^4-A^2 $ & $ -\text{K}_{1} A^{10}+\text{K}_{1} A^6-A^6-A^2 $ \\ \hline
$4.62 $ & $ -A^2-\frac{1}{A^2} $ & $ -A^2-\frac{1}{A^2} $ \\ \hline
$4.63 $ & $ -A^2-\frac{1}{A^2} $ & $ -A^2-\frac{1}{A^2} $ \\ \hline
$4.64 $ & $ -\frac{1}{A^2}-\frac{1}{A^4}-\frac{1}{A^6}+\frac{1}{A^{12}} $ & $ \frac{\text{K}_{1}}{A^6}-\frac{\text{K}_{1}}{A^{10}}-\frac{1}{A^2}-\frac{1}{A^6} $ \\ \hline
$4.65 $ & $ A^{12}-A^6-A^4-A^2 $ & $ -\text{K}_{1} A^{10}+\text{K}_{1} A^6-A^6-A^2 $ \\ \hline
$4.66 $ & $ -A^2-\frac{1}{A^2} $ & $ -A^2-\frac{1}{A^2} $ \\ \hline
$4.67 $ & $ -A^2-\frac{1}{A^2} $ & $ -A^2-\frac{1}{A^2} $ \\ \hline
$4.68 $ & $ -\frac{1}{A^2}-\frac{1}{A^4}-\frac{1}{A^6}+\frac{1}{A^{12}} $ & $ \frac{\text{K}_{1}}{A^6}-\frac{\text{K}_{1}}{A^{10}}-\frac{1}{A^2}-\frac{1}{A^6} $ \\ \hline
$4.69 $ & $ A^{12}-A^6-A^4-A^2 $ & $ -\text{K}_{1} A^{10}+\text{K}_{1} A^6-A^6-A^2 $ \\ \hline
$4.70 $ & $ \text{D}_{2}[1] A^8-2 A^8-A^6-2 A^4-A^2 $ & $ \text{D}_{2}[3] A^8+2 \text{K}_{1} A^6-A^6-A^2 $ \\ \hline
$4.71 $ & $ -A^4-A^2+\text{D}_{2}[1]-2-\frac{1}{A^2}-\frac{1}{A^4} $ & $ \text{K}_{1} A^2-A^2+\text{D}_{2}[2]+\frac{\text{K}_{1}}{A^2}-\frac{1}{A^2} $ \\ \hline
$4.72 $ & $ -A^4-A^2+\text{D}_{2}[1]-2-\frac{1}{A^2}-\frac{1}{A^4} $ & $ \text{K}_{1} A^2-A^2+\text{D}_{2}[2]+\frac{\text{K}_{1}}{A^2}-\frac{1}{A^2} $ \\ \hline
$4.73 $ & $ -A^2-\frac{1}{A^2} $ & $ -A^2-\frac{1}{A^2} $ \\ \hline
$4.74 $ & $ -A^2-\frac{1}{A^2} $ & $ -A^2-\frac{1}{A^2} $ \\ \hline
$4.75 $ & $ -A^2-\frac{1}{A^2} $ & $ -A^2-\frac{1}{A^2} $ \\ \hline
$4.76 $ & $ \text{D}_{4}[2] $ & $ \text{D}_{4}[2] $ \\ \hline
$4.77 $ & $ \text{D}_{4}[1] $ & $ \text{D}_{4}[1] $ \\ \hline
$4.78 $ & $ A^{12}-A^6-A^4-A^2 $ & $ -\text{K}_{1} A^{10}+\text{K}_{1} A^6-A^6-A^2 $ \\ \hline
$4.79 $ & $ A^{12}-A^6-A^4-A^2 $ & $ -\text{K}_{1} A^{10}+\text{K}_{1} A^6-A^6-A^2 $ \\ \hline
$4.80 $ & $ \text{D}_{4}[3] $ & $ \text{D}_{4}[3] $ \\ \hline
$4.81 $ & $ \text{D}_{4}[4] $ & $ \text{D}_{4}[4] $ \\ \hline
$4.82 $ & $ -A^{10}+\text{D}_{2}[1] A^6-A^6-\text{D}_{2}[1] A^2 $ & $ -A^{10}+\text{D}_{2}[1] A^6-A^6-\text{D}_{2}[1] A^2 $ \\ \hline
$4.83 $ & $ -A^2-\frac{1}{A^2} $ & $ -A^2-\frac{1}{A^2} $ \\ \hline
$4.84 $ & $ -A^{10}+\text{D}_{2}[1] A^6-A^6-\text{D}_{2}[1] A^2 $ & $ -A^{10}+\text{D}_{2}[1] A^6-A^6-\text{D}_{2}[1] A^2 $ \\ \hline
$4.85 $ & $ -A^2-\frac{1}{A^2} $ & $ -A^{10}+\text{K}_{1}^2 A^6-A^6-\text{K}_{1}^2 A^2 $ \\ \hline
$4.86 $ & $ -A^{10}-\frac{1}{A^{10}} $ & $ -A^{10}-A^2+\frac{\text{K}_{1}^2}{A^2}-\frac{1}{A^2}-\frac{\text{K}_{1}^2}{A^6}+\frac{1}{A^6} $ \\ \hline
$4.87 $ & $ -A^{10}+\text{D}_{2}[1] A^6-A^6-\text{D}_{2}[1] A^2 $ & $ -A^{10}+\text{D}_{2}[1] A^6-A^6-\text{D}_{2}[1] A^2 $ \\ \hline
$4.88 $ & $ -A^{10}+\text{D}_{2}[1] A^6-A^6-\text{D}_{2}[1] A^2 $ & $ -A^{10}+\text{D}_{2}[1] A^6-A^6-\text{D}_{2}[1] A^2 $ \\ \hline
$4.89 $ & $ A^{18}-A^{10}-A^6-A^2 $ & $ -A^{18}+2 \text{K}_{1}^2 A^{14}-2 A^{14}-2 \text{K}_{1}^2 A^{10}+A^{10}+A^6-A^2 $ \\ \hline
$4.90 $ & $ -A^{10}-\frac{1}{A^{10}} $ & $ -\text{K}_{1}^2 A^6+A^6+\text{K}_{1}^2 A^2-2 A^2+\frac{\text{K}_{1}^2}{A^2}-\frac{2}{A^2}-\frac{\text{K}_{1}^2}{A^6}+\frac{1}{A^6} $ \\ \hline
$4.91 $ & $ -A^2-\frac{1}{A^2} $ & $ -A^2-\frac{1}{A^2} $ \\ \hline
$4.92 $ & $ -A^2-\frac{1}{A^2} $ & $ -A^2-\frac{1}{A^2} $ \\ \hline
$4.93 $ & $ -A^{10}+\text{D}_{2}[1] A^6-A^6-\text{D}_{2}[1] A^2 $ & $ -A^{10}+\text{D}_{2}[1] A^6-A^6-\text{D}_{2}[1] A^2 $ \\ \hline
$4.94 $ & $ -A^2-\frac{1}{A^2} $ & $ -A^2-\frac{1}{A^2} $ \\ \hline
$4.95 $ & $ -A^2-\frac{1}{A^2} $ & $ -A^2-\frac{1}{A^2} $ \\ \hline
$4.96 $ & $ -A^{10}+\text{D}_{2}[1] A^6-A^6-\text{D}_{2}[1] A^2 $ & $ -A^{10}+\text{D}_{2}[1] A^6-A^6-\text{D}_{2}[1] A^2 $ \\ \hline
$4.97 $ & $ -A^2-\frac{1}{A^2} $ & $ -A^2-\frac{1}{A^2} $ \\ \hline
$4.98 $ & $ -A^2-\frac{1}{A^2} $ & $ -A^2-\frac{1}{A^2} $ \\ \hline
$4.99 $ & $ -A^{10}-\frac{1}{A^{10}} $ & $ -A^{10}-\frac{1}{A^{10}} $ \\ \hline
$4.100 $ & $ -A^2-\frac{1}{A^2} $ & $ -A^2-\frac{1}{A^2} $ \\ \hline
$4.101 $ & $ -A^2-\frac{1}{A^2} $ & $ -A^2-\frac{1}{A^2} $ \\ \hline
$4.102 $ & $ -A^2-\frac{1}{A^2} $ & $ -A^2-\frac{1}{A^2} $ \\ \hline
$4.103 $ & $ -A^{10}+\text{D}_{2}[1] A^6-A^6-\text{D}_{2}[1] A^2 $ & $ -A^{10}+\text{D}_{2}[1] A^6-A^6-\text{D}_{2}[1] A^2 $ \\ \hline
$4.104 $ & $ -A^2-\frac{1}{A^2} $ & $ -A^2-\frac{1}{A^2} $ \\ \hline
$4.105 $ & $ A^{18}-A^{10}-A^6-A^2 $ & $ A^{18}-A^{10}-A^6-A^2 $ \\ \hline
$4.106 $ & $ -A^2-\frac{1}{A^2} $ & $ -A^{10}+\text{K}_{1}^2 A^6-A^6-\text{K}_{1}^2 A^2 $ \\ \hline
$4.107 $ & $ -A^2-\frac{1}{A^2} $ & $ -A^2-\frac{1}{A^2} $ \\ \hline
$4.108 $ & $ -A^{10}-\frac{1}{A^{10}} $ & $ -A^{10}-\frac{1}{A^{10}} $ \\ \hline
\end{longtable}
}

\subsection{A Mathematica Program}

%
%
%
%
%
%
%
%
%
%
%
%
%
%
%
%
%
%
%
%
%
%
%

\subsubsection{A Parity Categorification}
The following program for a categorification of the arrow polynomial and the forgetful parity version is based on Dror Bar-Natan's construction \cite{DrorCat} for Khovanov homology.  Here we implement a version of Gaussian elimination for computing homology with coefficients over $\mathbb{Z}_2$ that was pointed out to us by Marc Culler and implemented by Baldwin and Gillam for computation of Heegaard-Floer knot homology in \cite{BaldwinGillam}. This can be described graphically as in Figure \ref{fig:GraphReduction} where we reduce based on the chosen marked edge.  This is equivalent to applying Gaussian Elimination to the chain complex as in Figure \ref{fig:GaussElim} where we assume $\phi$ (the equivalent of the selected edge) is invertible. Maps denoted by $\bullet$ are arbitrary and inconsequential in the final result.  For more on Gaussian Elimination and homotopy equivalence we point the reader to \cite{CMW}.

\begin{rem} A similar program for Khovanov homology and the forgetful parity version with $\mathbb{Z}_2$ coefficients is available on the first authors website.
\end{rem}

\begin{figure}[h!]
\centering
    \includegraphics[height=3cm]{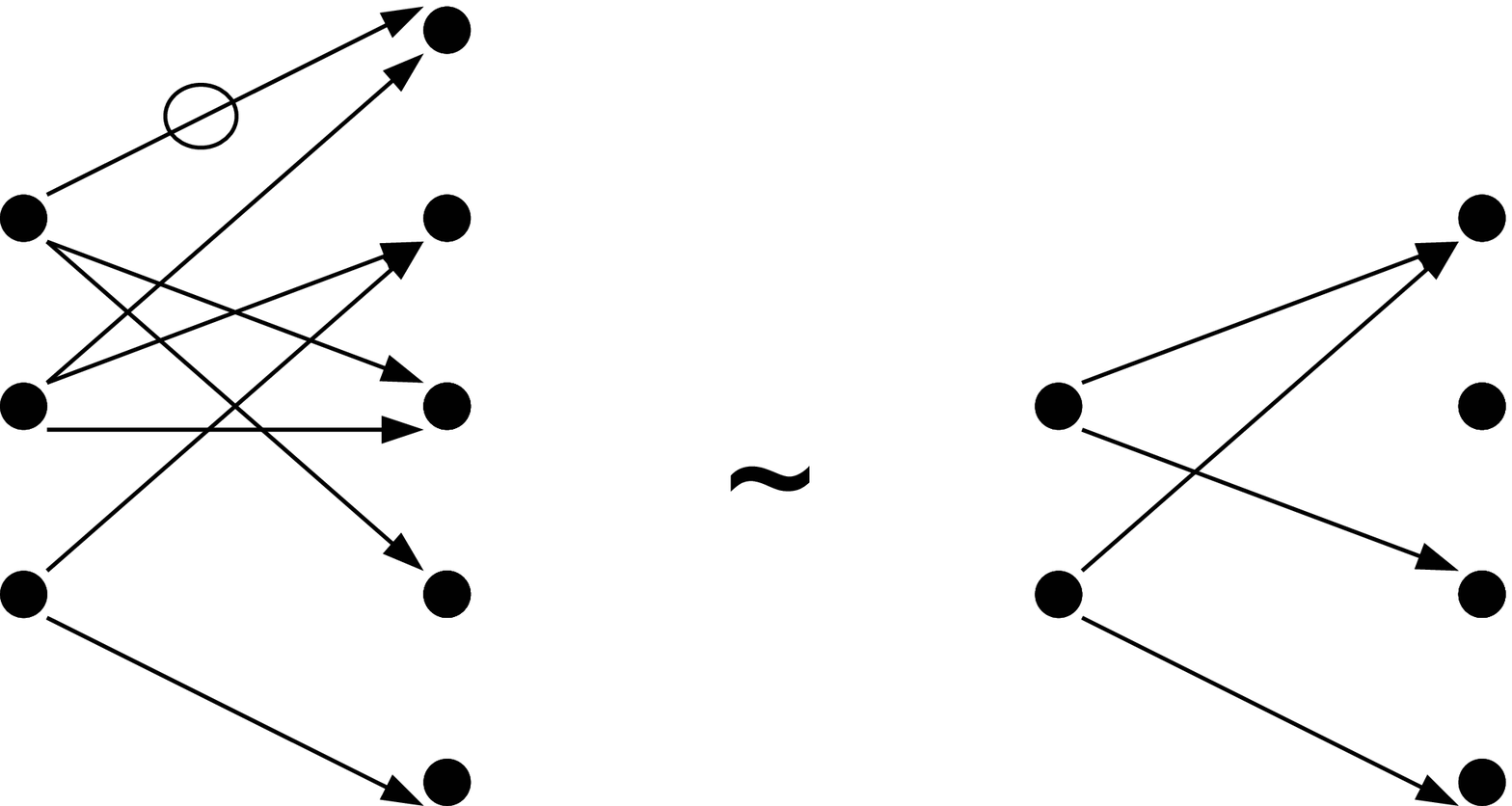}
\caption{}
\label{fig:GraphReduction}
\end{figure}

\begin{figure}[h!]
\centering
    \includegraphics[height=10cm]{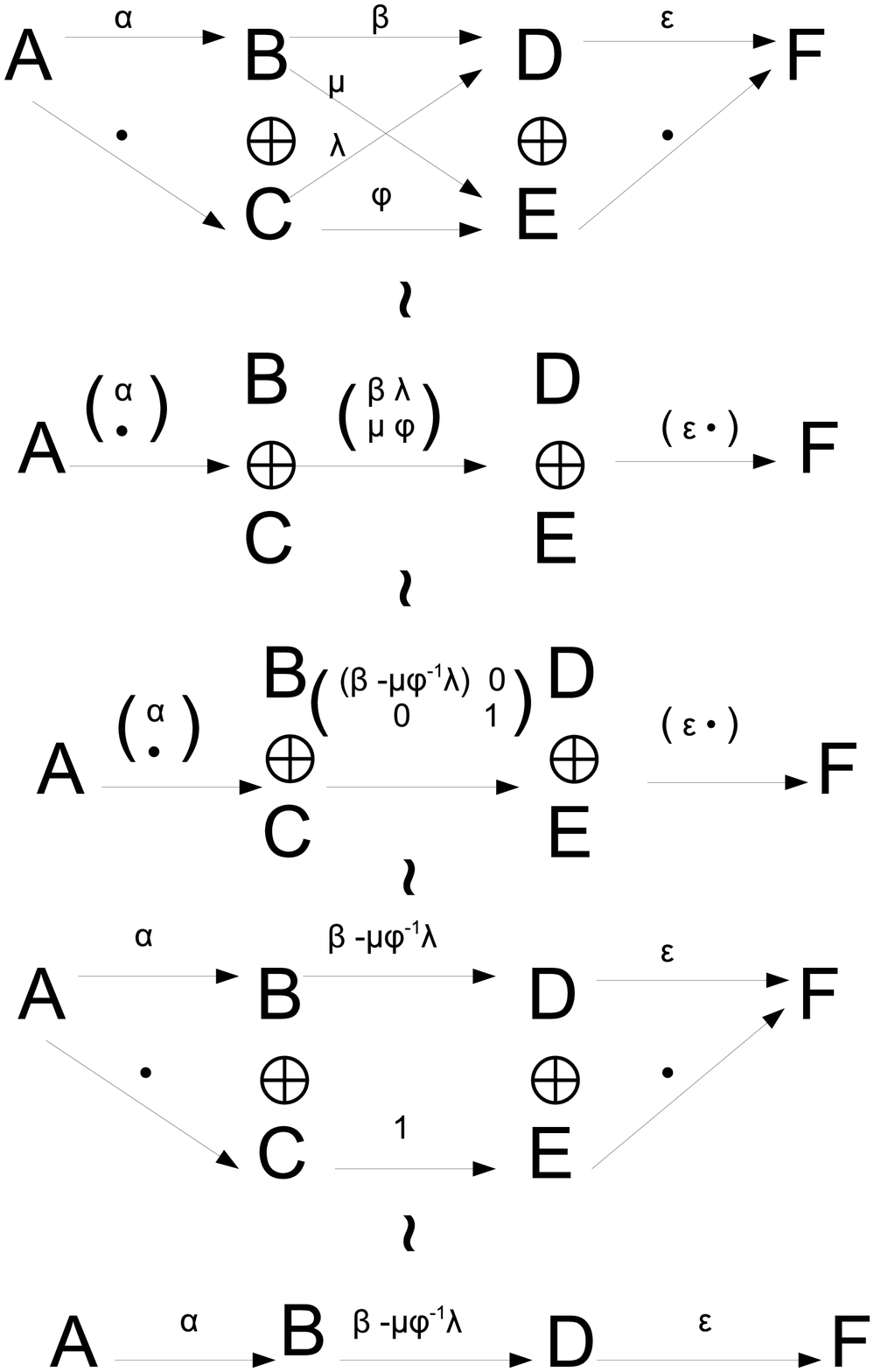}
\caption{}
\label{fig:GaussElim}
\end{figure}

%
%
%
%
%
%
%
%
%
%
%
%
%
%

\newpage

{\small
\begin{verbatim}
np[L_PD] := Count[L, _X];
nm[L_PD] := Count[L, _Y];
SetAttributes[del, Orderless]
\end{verbatim}
\verb+np+ and \verb+nm+ count the number of positive and negative crossings for a given planar diagram respectively.   We set  \verb+del+  to be orderless to reduce the number of necessary relations.\\

The following lines of program perform the forgetful mapping on the odd crossings.
\begin{verbatim}
EvenParityPD[L_PD] :=
 Sort[L /. {X[i_, j_, k_, l_] :>
      Odd[i, j, k, l] /; OddQ[i - j]} /. {Y[i_, j_, k_, l_] :>
     Odd[i, j, k, l] /; OddQ[i - j]}]
\end{verbatim}
\verb+EvenParity+ performs a simple check to determine if a given crossing is even or odd.  For an odd crossing it replaces the head \verb+X+ or \verb+Y+ with \verb+Odd+.  Note this subroutine assumes that arcs of a given knot diagram are labeling consecutively.  \\
\begin{verbatim}
OddIdentities[L_PD] :=
 ReplacePart[ ReplacePart[Reverse[Sort[
      Flatten[ReplacePart[Take[EvenParityPD[L],
          Length[EvenParityPD[L]] - (np[EvenParityPD[L]] +
             nm[EvenParityPD[L]])],
         0 :> List] /. {Odd[i_, j_, k_, l_] :> {del[i, k],
           del[j, l]}}]]],
    0 :> Times] //. {del[a_, b_] del[b_, c_] :> del[a, c]}, 0 :> List]
\end{verbatim}
\verb+OddIdentities+ creates a collection of arc relations displayed in terms of Kronecker deltas based on the odd crossings.  \\
\begin{verbatim}
EvenCross[L_PD] :=
 Drop[EvenParityPD[L],
  Length[EvenParityPD[L]] - (np[EvenParityPD[L]] +
     nm[EvenParityPD[L]])]
\end{verbatim}
\verb+EvenCross+ collects the even crossings from a given PD code.  \\
\begin{verbatim}
PDReduction[L_PD, d_del] := (mm := Min[d[[1]], d[[2]]];
  nn = Max[d[[1]], d[[2]]]; L /. {nn :> mm})
\end{verbatim}
\verb+PDReduction+ turns a single identity produced by \verb+OddIdentities+ into a reduction relation and applies this relation.\\
\begin{verbatim}
ForgetfulEvenParityPD[L_PD] := (RedPD = EvenCross[L];
  Do[RedPD = PDReduction[RedPD, OddIdentities[L][[i]]], {i,
    Length[OddIdentities[L]]}]; RedPD)
\end{verbatim}
\verb+ForgetfulEvenParityPD+ applies \verb+PDReduction+ for all of the relations in \verb+OddIdentities+ and returns the resulting PD code. \\
\begin{verbatim}
rule2 = {del[a_, b_][m_] del[a_, b_][m_] :> del[a, a][m],
   del[a_, b_][m_] del[b_, c_][n_] :> del[a, c][Min[m, n]],
   del[a_, b_][m_] led[b_, c_][n_] :> led[a, c][Min[m, n]],
   del[a_, b_][m_] led[c_, b_][n_] :> led[c, a][Min[m, n]],
   led[a_, b_][m_] del[b_, c_][n_] :> led[a, c][Min[m, n]],
   led[a_, b_][m_] del[c_, b_][n_] :> led[a, c][Min[m, n]],
   led[a_, b_][m_] led[b_, c_][n_] :> del[a, c][Min[m, n]]};
rule3 = {del[a_, a_][m1_] :> c[m1, 0, 0],
   del[a_, a_][m1_]^_ :> c[m1, 0, 0],
   led[a_, b_][m1_]^2 :> c[m1, 1, 1],
   led[a_, b_][m1_] led[a_, b_][m2_] :> c[Min[m1, m2], 1, 1],
   led[a_, b_][m1_] led[x_, b_][m2_] led[x_, d_][m3_] led[a_, d_][
      m4_] :> c[Min[m1, m2, m3, m4], 2, 2],
   led[a_, b_][m1_] led[x_, b_][m2_] led[x_, d_][m3_] led[e_, d_][
      m4_] led[e_, f_][m5_] led[a_, f_][m6_] :>
    c[Min[m1, m2, m3, m4, m5, m6], 3, 1],
   led[a_, b_][m1_] led[x_, b_][m2_] led[x_, d_][m3_] led[e_, d_][
      m4_] led[e_, f_][m5_] led[g_, f_][m6_] led[g_, h_][
      m7_] led[a_, h_][m8_] :>
    c[Min[m1, m2, m3, m4, m5, m6, m7, m8], 4, 3],
   led[a_, b_][m1_] led[x_, b_][m2_] led[x_, d_][m3_] led[e_, d_][
      m4_] led[e_, f_][m5_] led[g_, f_][m6_] led[g_, h_][
      m7_] led[y_, h_][m8_] led[y_, z_][m9_] led[a_, z_][m10_] :>
    c[Min[m1, m2, m3, m4, m5, m6, m7, m8, m9, m10], 5, 1]};
\end{verbatim}
\verb+rule2+ and \verb+rule3+ are reduction relations used by \verb+S+. \verb+rule2+ joins arcs and cusps while \verb+rule3+ produces the labeled circles for a basic (unenhanced) state. We follow the convention \verb+c[m, p,k]+ is circle \verb+m+ with arrow number \verb+p+ and dot of order \verb+k+. Where \verb+p = l*2^(k - 1)+ for \verb+l+ odd.\\
\begin{verbatim}
ruleStar = {v___c u___ X[i_, j_, k_,
      l_] :> ((u del[i, j][Min[i, j]] del[k, l][Min[k, l]] //.
         rule2 //.
        rule3) -> (u led[l, i][Min[l, i]] led[j, k][Min[j, k]] //.
         rule2 //. rule3)),
   v___c u___ Y[i_, j_, k_,
      l_] :> ((u led[l, i][Min[l, i]] led[j, k][Min[j, k]] //.
         rule2 //.
        rule3) -> (u del[i, j][Min[i, j]] del[k, l][Min[k, l]] //.
         rule2 //. rule3))};
\end{verbatim}
\verb+ruleStar+ is a reduction relations used by \verb+S+ which produces notation corresponding to a bifurcation on an edge denoted by \verb+*+ on the cube complex.\\
\begin{verbatim}
S[L_PD, a_List] :=
 Times[(Times @@ (Thread[{List @@
            Drop[L, Length[L] - (np[L] + nm[L])],
           a}] /. {{X[i_, j_, k_, l_], 0} :>
           del[i, j][Min[i, j]] del[k, l][Min[k, l]], {X[i_, j_, k_,
             l_], 1} :>
           led[l, i][Min[l, i]] led[j, k][Min[j, k]], {Y[i_, j_, k_,
             l_], 0} :>
           led[l, i][Min[l, i]] led[j, k][Min[j, k]], {Y[i_, j_, k_,
             l_], 1} :>
           del[i, j][Min[i, j]] del[k, l][Min[k, l]], {x_X, "*"} :>
           x, {y_Y, "*"} :> y})), (Times @@ (Take[L,
          Length[L] - (np[L] + nm[L])] /. {Odd[i_, j_, k_, l_] :>
           del[i, k][Min[i, k]] del[j, l][Min[j, l]]}))] //.
    rule2 //. rule3 //. ruleStar
S[L_PD, s_String] := S[L, Characters[s] /. {"0" -> 0, "1" -> 1}]
\end{verbatim}
\verb+S+ produces the unenhanced state of the cube complex for \verb+L+ corresponding to the vertex \verb+a+.\\
\begin{verbatim}
MG[expr_] :=
 expr //. {c[m_, 0, k_] :> 1} //. {c[m_, p_, k_] :>
     a[p]} //. {a[i_]^_ :> a[i]}
\end{verbatim}
\verb+MG+ computes the multiple grading for an given unenhanced state. If there is no arrow numbers \verb+MG+ returns 1.  If there are arrow numbers \verb+MG+ returns a product of the form $a[i_1] a[i_ 2]\ldots a[i_n]$, where the $i_j$ are the distinct arrow numbers (ie $i_j = i_k$ iff $j = k$).\\
\begin{verbatim}
Deg[expr_] := Count[expr, _v1, {0, 1}] - Count[expr, _vX, {0, 1}]
V[L_PD, s_String, deg___] :=
 V[L, Characters[s] /. {"0" -> 0, "1" -> 1}, deg]
V[L_PD, a_List] :=
 List @@ Expand[S[L, a] /. x_c :> ((vX @@ x) + (v1 @@ x))]
V[L_PD, a_List, deg_Integer] :=
 Select[V[L, a], (deg == Deg[#] + (Plus @@ a)) &]
\end{verbatim}
The above subroutines provide information on the enhanced states. \verb+V+ replaces \verb+c[m,p,k]+ by \verb|vX[m,p,k]+v1[m,p,k]| throughout then expands each expression. Each summand corresponds to an enhanced (labeled by X and 1) state, which we separate into a list of enhanced states at each vertex. \verb+Deg+ computes ($\#$ X's - $\#$1's)) and when given to $V$ returns enhanced states with a given bi-degree. ((bi-degree) = (homological degree) + ($\#$ X's - $\#$1's))\\
\begin{verbatim}
VG[expr_] :=
 expr //. {vX[m_, 0, k_] :> 1} //. {v1[m_, 0, k_] :> 1} //. {vX[m_,
         p_, k_] :> vg[k, 1]} //. {v1[m_, p_, k_] :>
       vg[k, -1]} //. {vg[a_, i_]^m_ :>
      vg[a, i*m]} //. {vg[a_, i_] vg[a_, j_] :> vg[a, i + j]} //. {vg[
     a_, 0] :> 1}
\end{verbatim}
We need to compute the vector grading of an enhanced state. Recall that this (inf. dim.) vector is the sum over the labeling in the enhanced state where, for $i > 0$, we transform \verb+vX[*,*,i]+ into the vector that is 1 in the $i^th$ position and 0 elsewhere and similarly we transform \verb+v1[*,*,i]+ into the vector that is -1 in the $i^th$ position and 0 elsewhere. \verb+VG+ returns 1 if the vector grading is the zero vector else it returns the product of terms of the form \verb+v[k, n]+ corresponding to the $k^th$ position in the vector grading having value n.\\
\begin{verbatim}
d[L_PD, s_String] := d[L, Characters[s] /. {"0" -> 0, "1" -> 1}]
d[L_PD, a_List] :=
 S[L, a] //. {(c[x__] c[y__] -> c[z__])*_. :> {v1@x v1@y -> 0,
        v1@x vX@y -> 0, vX@x v1@y -> 0,
        vX@x vX@y -> 0} /; (MG[S[L, a //. {"*" -> 0}]] =!=
         MG[S[L, a //. {"*" -> 1}]]), (c[z__] ->
         c[x__] c[y__])*_. :> {v1@z -> 0,
        vX@z -> 0} /; (MG[S[L, a //. {"*" -> 0}]] =!=
         MG[S[L, a //. {"*" -> 1}]])} //. {(c[x__] c[y__] ->
        c[z__])*_. :> {v1@x v1@y -> v1@z, v1@x vX@y -> vX@z,
       vX@x v1@y -> vX@z,
       vX@x vX@y ->
        0} /; (VG[v1@x v1@y] === VG[v1@z]) && (VG[v1@x vX@y] ===
         VG[vX@z]) && (VG[vX@x v1@y] === VG[vX@z]), (c[x__] c[y__] ->
        c[z__])*_. :> {v1@x v1@y -> 0, v1@x vX@y -> vX@z,
       vX@x v1@y -> vX@z,
       vX@x vX@y ->
        0} /; (VG[v1@x v1@y] =!= VG[v1@z]) && (VG[v1@x vX@y] ===
         VG[vX@z]) && (VG[vX@x v1@y] === VG[vX@z]), (c[x__] c[y__] ->
        c[z__])*_. :> {v1@x v1@y -> v1@z, v1@x vX@y -> 0,
       vX@x v1@y -> vX@z,
       vX@x vX@y ->
        0} /; (VG[v1@x v1@y] === VG[v1@z]) && (VG[v1@x vX@y] =!=
         VG[vX@z]) && (VG[vX@x v1@y] === VG[vX@z]), (c[x__] c[y__] ->
        c[z__])*_. :> {v1@x v1@y -> v1@z, v1@x vX@y -> vX@z,
       vX@x v1@y -> 0,
       vX@x vX@y ->
        0} /; (VG[v1@x v1@y] === VG[v1@z]) && (VG[v1@x vX@y] ===
         VG[vX@z]) && (VG[vX@x v1@y] =!= VG[vX@z]), (c[x__] c[y__] ->
        c[z__])*_. :> {v1@x v1@y -> 0, v1@x vX@y -> 0,
       vX@x v1@y -> vX@z,
       vX@x vX@y ->
        0} /; (VG[v1@x v1@y] =!= VG[v1@z]) && (VG[v1@x vX@y] =!=
         VG[vX@z]) && (VG[vX@x v1@y] === VG[vX@z]), (c[x__] c[y__] ->
        c[z__])*_. :> {v1@x v1@y -> 0, v1@x vX@y -> vX@z,
       vX@x v1@y -> 0,
       vX@x vX@y ->
        0} /; (VG[v1@x v1@y] =!= VG[v1@z]) && (VG[v1@x vX@y] ===
         VG[vX@z]) && (VG[vX@x v1@y] =!= VG[vX@z]), (c[x__] c[y__] ->
        c[z__])*_. :> {v1@x v1@y -> v1@z, v1@x vX@y -> 0,
       vX@x v1@y -> 0,
       vX@x vX@y ->
        0} /; (VG[v1@x v1@y] === VG[v1@z]) && (VG[v1@x vX@y] =!=
         VG[vX@z]) && (VG[vX@x v1@y] =!= VG[vX@z]), (c[x__] c[y__] ->
        c[z__])*_. :> {v1@x v1@y -> 0, v1@x vX@y -> 0, vX@x v1@y -> 0,
        vX@x vX@y ->
        0} /; (VG[v1@x v1@y] =!= VG[v1@z]) && (VG[v1@x vX@y] =!=
         VG[vX@z]) && (VG[vX@x v1@y] =!= VG[vX@z]), (c[z__] ->
        c[x__] c[y__])*_. :> {v1@z -> v1@x vX@y + vX@x v1@y,
       vX@z ->
        vX@x vX@y} /; (VG[v1@z] === VG[v1@x vX@y]) && (VG[v1@z] ===
         VG[vX@x v1@y]) && (VG[vX@z] === VG[vX@x vX@y]), (c[z__] ->
        c[x__] c[y__])*_. :> {v1@z -> vX@x v1@y,
       vX@z ->
        vX@x vX@y} /; (VG[v1@z] =!= VG[v1@x vX@y]) && (VG[v1@z] ===
         VG[vX@x v1@y]) && (VG[vX@z] === VG[vX@x vX@y]), (c[z__] ->
        c[x__] c[y__])*_. :> {v1@z -> v1@x vX@y,
       vX@z ->
        vX@x vX@y} /; (VG[v1@z] === VG[v1@x vX@y]) && (VG[v1@z] =!=
         VG[vX@x v1@y]) && (VG[vX@z] === VG[vX@x vX@y]), (c[z__] ->
        c[x__] c[y__])*_. :> {v1@z -> v1@x vX@y + vX@x v1@y,
       vX@z ->
        0} /; (VG[v1@z] === VG[v1@x vX@y]) && (VG[v1@z] ===
         VG[vX@x v1@y]) && (VG[vX@z] =!= VG[vX@x vX@y]), (c[z__] ->
        c[x__] c[y__])*_. :> {v1@z -> 0,
       vX@z ->
        vX@x vX@y} /; (VG[v1@z] =!= VG[v1@x vX@y]) && (VG[v1@z] =!=
         VG[vX@x v1@y]) && (VG[vX@z] === VG[vX@x vX@y]), (c[z__] ->
        c[x__] c[y__])*_. :> {v1@z -> vX@x v1@y,
       vX@z ->
        0} /; (VG[v1@z] =!= VG[v1@x vX@y]) && (VG[v1@z] ===
         VG[vX@x v1@y]) && (VG[vX@z] =!= VG[vX@x vX@y]), (c[z__] ->
        c[x__] c[y__])*_. :> {v1@z -> v1@x vX@y,
       vX@z ->
        0} /; (VG[v1@z] === VG[v1@x vX@y]) && (VG[v1@z] =!=
         VG[vX@x v1@y]) && (VG[vX@z] =!= VG[vX@x vX@y]), (c[z__] ->
        c[x__] c[y__])*_. :> {v1@z -> 0,
       vX@z ->
        0} /; (VG[v1@z] =!= VG[v1@x vX@y]) && (VG[v1@z] =!=
         VG[vX@x v1@y]) && (VG[vX@z] =!= VG[vX@x vX@y])} //. {(c[
        x__] -> c[y__])*_. :> {v1@x -> 0, vX@x -> 0}}
\end{verbatim}
\verb+d+ computes the edge morphism for the edge corresponding to the label \verb+a+. Here \verb+a+ is a list of 0's and 1's  along with a single \verb+*+ where \verb+*+ corresponds to the crossing we are resmoothing and 0 and 1 correspond to $A-$ and $A^{-1}$-smoothings at the remaining crossings.\\
We now have enough to construct the cube complex.  The following collection of routines together collect this information and constructs a graph.  We then preform the previously mentioned graphical reduction algorithm to compute the homology.\\
\begin{verbatim}
dif[L_PD, s_String] := dif[L, Characters[s] /. {"0" -> 0, "1" -> 1}]
dif[L_PD, a_List] :=
 Flatten[MapThread[
     ge, {V[L, a /. ("*" :> 0)],
      Expand[V[L, a /. ("*" :> 0)] /. d[L, a]]}] /. (ge[u___,
       v__ + w__] :> {ge[u, v], ge[u, w]}) /. (ge[z___, 0] :> 0)]
Comp[L_PD] :=
  Join @@ (Join @@ {Expand[
          ed[((v @@ #) /. ("*" :> 0)), ((v @@ #) /. ("*" :> 1))] dif[
            L, #]]} & /@ Perms[L]) //. (ed[a__, b__] ge[c__, d__] :>
     edge[a*c, b*d]);
Edges[L_PD] :=
 Cases[If[((# === 0) || (#[[1]] === #[[2]])), 0, #] & /@ Comp[L],
  Except[0]]
\end{verbatim}
\verb+dif+ constructs the set of directed edges for the graph corresponding to an edge of the cube complex. It does the locally by applying the differential \verb+d+ to the tail of each edge. \verb+Comp+ produces the full set of vertices for the graph and connected the heads and tails of the directed edges formed by \verb+dif+. Some zero differentials remain. \verb+Edges+ removes these from the list. \\
\begin{verbatim}
KhColumn[L_PD, r_Integer] :=
  If[r < 0 || r > (np[L] + nm[L]), {0},
   Join @@ (((v @@ #) V[L, #]) & /@
      Permutations[
       Join[Table[0, {(np[L] + nm[L]) - r}], Table[1, {r}]]])];
Gens[L_PD] :=
  Cases[Flatten[{KhColumn[L, #] & /@ Range[0, (np[L] + nm[L])]}],
   Except[0]];
\end{verbatim}
\verb+Gens+ produces the collection of enhanced states corresponding to the enhanced states of the complex (i.e. the nodes in the graph) by calling \verb+KhColumn+ for each homological degree of the planar diagram for the knot.
\begin{verbatim}
Perms[L_PD] :=
  Join @@ (Permutations[
       Join[{"*"}, Table[0, {(np[L] + nm[L]) - # - 1}],
        Table[1, {#}]]] & /@ Range[0, (np[L] + nm[L]) - 1]);
\end{verbatim}
\verb+Perms+ generates the lists of 0's, 1's and a single \verb+*+ corresponding to the edges of the cube complex.
\begin{verbatim}
Height[gen___] := (gen /. {v1[a___] :> 1, vX[b___] :> 1,
     v[c___] :> Plus[c]});
EdgeHeight[e__] := If[IntegerQ[e], -1, Height[e[[1]]]];
\end{verbatim}
\begin{verbatim}
MultGrad[e__] :=
  e /. {v[expr___] :> 1, vX[a_, b_, c_] :> K[b],
      v1[a_, b_, c_] :> K[b]} /. {K[0] :> 1} /. {K[i_]^_ :> K[i]};
\end{verbatim}
\verb+MultGrad+ takes a homology class representative and outputs its multiple grading.
\begin{verbatim}
AddDelEdges[edges_List, e_edge] := (Off[Part::partd];
  listfs = Select[edges, #[[2]] == e[[2]] &];
  listgs = Select[edges, #[[1]] == e[[1]] &];
  remfs = Rule[#, 0] & /@ listfs; remgs = Rule[#, 0] & /@ listgs;
  sym = edges /. remfs /. remgs /. {edge[e[[2]], ___] :> 0};
  diff = Tuples[{listfs,listgs}] /.
    {{edge[f1__, e2__], edge[e1__, g2__]} :> edge[f1, g2]};
  symdiff =
   If[(listfs == {}) || (listgs == {}), sym,
    Union[Complement[diff, sym], Complement[sym, diff]]];
  On[Part::partd]; symdiff)
\end{verbatim}
Given an edge \verb+e+, \verb+AddDelEdges+ looks for all local subgraphs which share the same head as \verb+e+.  It then computes a symmetric difference with collection of edges whose tail is the same as \verb+e+.\\
\begin{verbatim}
Reduc[gens_List, edges_List,
   e_edge] := (Checks[(gens /. {e[[1]] :> 0, e[[2]] :> 0}),
    AddDelEdges[edges, e]]);
Checks[gens_List, unsortededges_List] := (
   edges = Cases[(SortBy[unsortededges, EdgeHeight[#] &]), Except[0]];
   If[edges == {}, Cases[gens, Except[0]],
    Reduc[gens, Delete[edges, 1], edges[[1]]]]);
\end{verbatim}
\verb+Checks+ looks for the remaining edge whose head is in the highest homological degree.  It then applies \verb+Reduc+ to remove the edge and apply the symmetric difference algorithm in \verb+AddDelEdges+.\\
\begin{verbatim}
HomReps[L_PD] :=
  Block[{$IterationLimit = Infinity, $RecursionLimit = Infinity},
   Checks[Cases[Gens[L], Except[0]], Cases[Edges[L], Except[0]]]];
\end{verbatim}
\verb+HomReps+ takes a planar diagram code, repeatedly applies \verb+Checks+  to run the graph reduction algorithm and outputs representatives for the homology classes.\\
\begin{verbatim}
QT[gen___, L_PD] :=
  (r = (Height[gen] - nm[L]); (t^r)
      *(q^(r + Deg[gen] + np[L] - nm[L])));
\end{verbatim}
\verb+QT+ computes the associated powers of q and t for a given representative of a homology class.\\
\begin{verbatim}
AKh[L_PD] :=
  Plus @@ (QT[#, L]*VG[# /. {v[a___] :> 1}]*
       MultGrad[# /. {v[a___] :> 1}] & /@ HomReps[L]);
ParityAKh[L_PD] :=
  If[Length[ForgetfulEvenParityPD[L]] == 1 ||
    Head[ForgetfulEvenParityPD[L]] === PDReduction, q + q^(-1),
   Plus @@ (QT[#, ForgetfulEvenParityPD[L]]*VG[# /. {v[a___] :> 1}]*
        MultGrad[# /. {v[a___] :> 1}] & /@
      HomReps[ForgetfulEvenParityPD[L]])];
\end{verbatim}

Finally, \verb+AKh+ and \verb+ParityAKh+ compute the corresponding categorifications for a given planar diagram L. \\
}

\end{document}